\newtheorem{Theorem}{Theorem}[section]
\newtheorem{Lemma}[Theorem]{Lemma}
\newtheorem{Proposition}[Theorem]{Proposition}
\newtheorem{Remark}[Theorem]{Remark}
\numberwithin{equation}{section}
\def\be{\begin{equation}}
	\def\ee{\end{equation}}
\def\ben{\begin{eqnarray}}
	\def\een{\end{eqnarray}}
\newcommand{\ncom}{\newcommand}
\ncom{\n}{\normalfont}
\ncom{\N}{\mathbb{N}}
\ncom{\Lc}{\mathcal}
\ncom{\wt}{\widetilde}
\ncom{\Wf}{\boldsymbol{w}}
\ncom{\Af}{\boldsymbol{A}}
\ncom{\Bf}{\boldsymbol{B}}
\ncom{\Hf}{\boldsymbol{H}}
\ncom{\Pf}{\boldsymbol{P}}
\ncom{\no}{\nonumber}
\ncom{\ub}{\boldsymbol{u}}
\ncom{\yb}{\boldsymbol{y}}
\ncom{\vb}{\boldsymbol{v}}
\ncom{\T}{\mathbb{T}}
\ncom{\C}{\mathbb{C}} 
\ncom{\Hb}{\mathbb{H}}
\ncom{\Lb}{\mathbb{L}}
\ncom{\V}{\mathbb{V}}
\ncom{\U}{\mathbb{U}}
\ncom{\Ac}{\mathcal{A}}
\ncom{\Bc}{\mathcal{B}}
\ncom{\af}{\boldsymbol{a}}
\ncom{\pf}{\boldsymbol{p}}
\ncom{\zb}{\boldsymbol{z}}
\newcommand{\vertiii}[1]{{\left\vert\kern-0.25ex\left\vert\kern-0.25ex\left\vert #1 
		\right\vert\kern-0.25ex\right\vert\kern-0.25ex\right\vert}}
\long\def\/*#1*/{}
\title[Stabilizability of a generalized Burgers-Huxley equation]{Feedback Stabilizability of a generalized Burgers-Huxley equation with kernel  around a non-constant steady state}
\date{\today}
\author{{WASIM AKRAM$^\dag$, Manika Bag$^\ddag$,}
	\and{Manil T. Mohan$^\dag$}}
\thanks{$\dag$ Department of Mathematics, Indian Institute of Technology Roorkee, Uttarakhand, 247667, India, Email- {\normalfont{ wakram2k11@gmail.com; maniltmohan@ma.iitr.ac.in}}\\
	$\ddag$ Department of Mathematics, Indian Institute of Science, Education and Research, Thiruvananthapuram,  Kerala, 695551, India, Email- {\normalfont {manikabag19@iisertvm.ac.in}}\\
	Dr. Wasim is supported by NBHM (National Board of Higher Mathematics, Department of Atomic Energy) postdoctoral fellowship, No. 0204/16(1)(2)/2024/R\&D-II/10823. Ms. Manika Bag would like to thank IIT Roorkee for providing stimulating scientific environment and resources.}
\renewcommand{\tocsection}[3]{%
	\indentlabel{\@ifnotempty{#2}{\bfseries\ignorespaces#1 #2\quad}}\bfseries#3}
\renewcommand{\tocsubsection}[3]{%
	\indentlabel{\@ifnotempty{#2}{\ignorespaces#1 #2\quad}}#3}
\newcommand\@dotsep{4.5}
\def\@tocline#1#2#3#4#5#6#7{\relax
	\ifnum #1>\c@tocdepth 
	\else
	\par \addpenalty\@secpenalty\addvspace{#2}%
	\begingroup \hyphenpenalty\@M
	\@ifempty{#4}{%
		\@tempdima\csname r@tocindent\number#1\endcsname\relax
	}{%
		\@tempdima#4\relax
	}%
	\parindent\z@ \leftskip#3\relax \advance\leftskip\@tempdima\relax
	\rightskip\@pnumwidth plus1em \parfillskip-\@pnumwidth
	#5\leavevmode\hskip-\@tempdima{#6}\nobreak
	\leaders\hbox{$\m@th\mkern \@dotsep mu\hbox{.}\mkern \@dotsep mu$}\hfill
	\nobreak
	\hbox to\@pnumwidth{\@tocpagenum{\ifnum#1=1\bfseries\fi#7}}\par
	\nobreak
	\endgroup
	\fi}
\renewcommand\csname r@tocindent0\endcsname{0pt}
\def\l@subsection{\@tocline{2}{0pt}{2.5pc}{5pc}{}}
\begin{document}
	
	\pagenumbering{arabic}

\begin{abstract}
In this article, we investigate a generalized Burgers-Huxley equation with a smooth kernel defined in a bounded domain $\Omega\subset\mathbb{R}^d$, $d\in\{1,2,3\}$, focusing on feedback stabilizability around a non-constant steady state. Initially, employing the Banach fixed point theorem, we establish the local existence and uniqueness of a strong solution, which is subsequently extended globally using an energy estimate. To analyze stabilizability, we linearize the model around a non-constant steady state and examine the stabilizability of the principal system. For the principal system, we develop a feedback control operator by solving an appropriate algebraic Riccati equation. This allows for the construction of both finite and infinite-dimensional feedback operators. By applying this feedback operator and establishing necessary regularity results, we utilize the Banach fixed point theorem to demonstrate the stabilizability of the entire system. Furthermore, we also explore the stabilizability of the model problem around the zero steady state and validate our findings through numerical simulations using the finite element method for both zero and non-constant steady states.
\end{abstract}
\maketitle
\noindent \textbf{Keywords.} Generalize Burgers-Huxley equation with memory; Feedback stabilization; Distributed control; Non-constant steady state. \\
\noindent \textbf{MSC Classification (2020).} 93D15 $\cdot$ 93C20 $\cdot$ 35R09 $\cdot$ 35Q35 

\section{Introduction} 
The \emph{generalized Burgers-Huxley equation (GBHE)} serves as a prototype model for capturing the interplay between reaction mechanisms, convection effects, and diffusion transport. For more details about this model, we refer to the works \cite{WZLuGBHE, WazwazGBHE, SMAKSisc, MTM_Camwa,EDTS,MTMAnkit} and for stationary case see \cite{MTMAKRicGBHE}.

\subsection{Model problem}
Let $\Omega \subset \mathbb{R}^d,$ $d\in \{1,2,3\},$ be bounded domain with $C^2$ boundary $\Gamma.$ 
Consider the following generalized Burgers-Huxley equation with kernel 
\begin{equation} \label{eq:GBHE}
\left\{
    \begin{aligned}
& y_t - \eta \Delta y +  \alpha y^\delta \sum_{i=1}^d \frac{\partial y}{\partial x_i} - \kappa \int_0^t e^{-\lambda (t-s)}\Delta y(\cdot,s)ds  \\&\quad= \beta y(1-y^\delta)(y^\delta-\gamma)+ f \ \text{ in }\  \Omega\times (0,\infty), \\
& y=0  \ \text{ on }\  \Gamma\times [0,\infty), \\
        & y(x,0)=y_0 \ \text{ in }\ \Omega,
    \end{aligned}\right.
\end{equation}
where
\begin{itemize}
	\item $y=y(x,t)$ signifies the concentrations or density of a specific thing of interest, such as the population density of a chemical species or a biological environment;
	\item $\delta\in\N$ is a parameter which represents the degree of nonlinearity within the equation, subsequently controlling the magnitude of the response or feedback term in the equation; In fact it can take any value as given below: 
	\begin{align} \label{eqdef:delta}
		\delta=\begin{cases}
			\text{ any natural number, } & \text{ if } d=1, 2, \\
			1,2, & \text{ if } d=3;
		\end{cases}
	\end{align}
	\item $\alpha>0$ denotes the coefficient associated with the nonlinear advection term, capturing the energy of a transport phenomena or nonlinear wave propagation. It is known as  the advection coefficient;
	\item $\eta>0$, known as the viscosity coefficient, denotes the coefficient of diffusion term which helps us to understand  the spreading of the quantity $u$;
	\item $\beta>0$ represents the parameter associated to the reaction or feedback term, controlling the nonlinear reaction's or growth process's intensity;
	\item $\gamma\in(0,1)$ is another parameter associated with the reaction or feedback term, impacting the behavior of the system at different concentration levels;
	\item $ \lambda>0,$ the relaxation time (or decay rate), and $\kappa>0$ is a scaling factor. 
\end{itemize}
The model \eqref{eq:GBHE} with $\alpha=\delta=1, \kappa=0,$ and $\beta=0$ is the \emph{classical viscous Burgers} equation and the case  $\delta=1$ and $\kappa=0$ is known as the \emph{Burgers-Huxley equation}. The well-posedness of the problem \eqref{eq:GBHE}  is discussed in \cite[Theorem 2.11]{MTM_Camwa}, in fact, for the values of $\delta $ given in \eqref{eqdef:delta},  it is shown that for $y_0\in H^1_0(\Omega)$ and $f \in L^2(0,\infty; L^2(\Omega)),$  \eqref{eq:GBHE} has a unique strong solution 
\begin{align*}
    y \in L^2(0,\infty; H^2(\Omega)) \cap C([0,\infty); H^1_0(\Omega)) \cap L^{2(\delta+1)}(0,\infty; L^{6(\delta+1)}(\Omega)) \text{ with } y_t \in L^2(0,\infty; L^2(\Omega)).
\end{align*}
In the first part of this article, we provide an alternate and short proof of the existence and uniqueness of strong solution using semigroup approach and  Banach fixed point theorem. This method also avoids the tedious Faedo-Galerkin approximations which is usually find in the literature to obtain strong solutions to the problem \eqref{eq:GBHE} (\cite{SMAKSisc}).  First, we establish the existence of a local solution and then a global solution by using some energy estimates (see the Section \ref{sec:wellpsed}). Later, in this article, we study feedback stabilizability of GBHE with kernel around a non-constant steady state (see Section \ref{sec:stabaroundNonzero}) by using an interior control. Also, we briefly discuss the stabilizability around zero steady state (see Section \ref{sec:stabaroundzero}). Numerical studies have been carried out in Section \ref{sec:NS}.

\subsection{Literature survey}
Extensive literature exists on Burgers' equation and feedback stabilizability; interested readers can refer to \cite{BreKun20,BreKunPf19,SChowErve,JPR6-2017,JPR15SICON,BBSW15,BarbSICON12,JPRSheThev,BarbSICON11,BadESAIM9,AKR24,WMMTMNSEm} and related sources. However, for the current discussion, we highlight a few specific references. For instance, \cite{JPR,JPR2010} explored exponential stabilization of linearized Navier-Stokes equations (NSEs) around unstable stationary solutions using a boundary control. Authors in \cite{CRR-SICON,CMRR-JDE} investigated stabilization of one-dimensional compressible NSEs around constant steady states. Boundary feedback stabilization for two-dimensional fluid flow governed by NSEs with mixed boundary conditions is detailed in \cite{JPR15SICON}. Local stabilization of one-dimensional compressible NSEs around constant steady solutions was established in \cite{MRR_ADE}, while \cite{WKR} examined three-dimensional viscous Burgers equations with memory. Using spectral properties of the Oseen–Stokes operator, \cite{MuntNSEMem} studied NSEs with fading memory for stabilizability using boundary Dirichlet feedback controllers. They further explored integral equations through a coupled form and semigroup theory, yielding local stabilization results. For nonlinear parabolic systems, \cite{Badra-T} demonstrated stabilizability using finite-dimensional feedback controllers. Breiten and Kunisch in \cite{BreKun14} adopted a Riccati-based approach, echoed in \cite{Las-Tr-91,Lasiecka1,Lasieckabook}. Munteanu in \cite{Mont} addressed semilinear heat equations with fading memory, achieving stabilization with finite-dimensional boundary feedback controllers. In \cite{Thev-JPR}, authors stabilized two-dimensional Burgers equations using a nonlinear feedback controller derived from Hamilton-Jacobi-Bellman and algebraic Riccati equations. In one dimension, the authors in \cite{RaghuMTMKdvGBH} explored stabilization of the generalized Korteweg-de Vries-Burgers-Huxley equation in $L^2$ and $H^1$ using Lyapunov techniques and Neumann boundary nonlinear feedback control. Additionally, in \cite{RaghuMTMGBH}, they established exponential stability for the generalized Burgers-Huxley equation through appropriate boundary control strategies.

Local stabilization of viscous Burgers equation with memory is detailed in \cite{WKR}, focusing on reformulating equations in a coupled form. This approach is extended to NSEs with memory in \cite{WMMTMNSEm}, particularly regarding stabilizability around non-constant steady states. Challenges arise due to variable coefficients in the coupled equations' operators, complicating direct eigenvalue determination. 

\subsection{Methodology, contributions and novelty} \label{sec:contMethod}
This subsection focuses to discuss the methods we follow to establish the required results. By defining  $z=\int_0^t e^{-\lambda(t-s)}y(s) ds,$ one can re-write \eqref{eq:GBHE} as
\begin{equation} \label{eq:GBHE-u-coup}
\left\{
    \begin{aligned}
& y_t - \eta \Delta y +  \alpha y^\delta\sum_{i=1}^d \frac{\partial y}{\partial x_i} - \kappa\Delta z + \beta y(y^\delta-1)(y^\delta-\gamma) = f \ \text{ in }\  \Omega\times (0,\infty), \\
& z_t+\lambda z - y =0 \ \text{ in }\  \Omega\times (0,\infty),\\
& y=0 , \, \& \,z=0  \ \text{ on }\  \Gamma\times (0,\infty), \\
        & y(0)=y_0, \, \& \, z(0)=0 \ \text{ in }\ \Omega.
    \end{aligned}\right.
\end{equation}
We first show that the strong solution of a heat equation also belongs to $L^{2(\delta+1)}(0,\infty; L^{6(\delta+1)}(\Omega))$ (see Proposition \ref{pps:heatStrongL2L6}). Next, we consider the corresponding linear coupled equation
\begin{equation} \label{eq:GBHE-Lin-coup}
\left\{
    \begin{aligned}
& y_t - \eta \Delta y +\beta\gamma y - \kappa\Delta z = f  \ \text{ in }\   \Omega\times (0,\infty), \\
& z_t+\lambda z - y =0 \ \text{ in }\  \Omega\times (0,\infty),\\
& y=0 , \, \& \,z=0  \ \text{ on }\  \Gamma\times (0,\infty), \\
        & y(0)=y_0, \, \& \, z(0)=0 \ \text{ in }\ \Omega,
    \end{aligned}\right.
\end{equation}
which further can be written as 
\begin{equation} \label{eq:linOp}
	\Wf'(t)=\Af \Wf(t) + \begin{pmatrix} f(t) \\ 0 \end{pmatrix}, \text{ for all }t>0, \quad \Wf(0)= \begin{pmatrix} y_0 \\ 0 \end{pmatrix}=:\boldsymbol{w}_0,
\end{equation}
in $\Hf=L^2(\Omega)\times L^2(\Omega)$ with $\Wf =\begin{pmatrix} y \\ z \end{pmatrix},$ where the unbounded operator $\Af: D(\Af) \subset \Hf \rightarrow \Hf$ is defined as
\begin{equation} \label{eqdef:A}
	\begin{aligned}
		& \Af = \begin{pmatrix} \eta \Delta -\beta\gamma I & \kappa \Delta \\  I & -\lambda I    \end{pmatrix} \text{ with } D(\Af)=\left\lbrace \begin{pmatrix} w \\ v \end{pmatrix} \in \Hf \, |\, \eta w+\kappa v \in H^2(\Omega) \cap H^1_0(\Omega) \right\rbrace, \\
		& \text{ that is, } \Af\begin{pmatrix} w \\ v \end{pmatrix} = \begin{pmatrix} \Delta (\eta w+\kappa v) -\beta\gamma w \\ -\lambda v+w \end{pmatrix} \text{ for all } \begin{pmatrix} w \\ v \end{pmatrix} \in D(\Af),
	\end{aligned}
\end{equation}
with $I:L^2(\Omega)\to L^2(\Omega)$ being the identity operator. Next, we establish a regularity result (see Step 1 of proof of Theorem \ref{th:localStrongsolGBHE}) for the system \eqref{eq:GBHE-Lin-coup} using semigroup method and regularity for a heat equation.  A utilization of Banach fixed point theorem leads to the existence of a local strong solution of the the coupled non-linear system \eqref{eq:GBHE-u-coup} which further extended to a global strong unique solution with a help of energy estimates (see Theorem \ref{th:localStrongsolGBHE} and its proof).

In the next part of this article, we focus to study feedback stabilization of GBHE with memory
\begin{equation} \label{eq:GBHENC-intro}
	\left\{
	\begin{aligned}
		& y_t - \eta \Delta y +  \alpha y^\delta \sum_{i=1}^d \frac{\partial y}{\partial x_i} - \kappa \int_0^t e^{-\lambda (t-s)}\Delta y(\cdot,s)ds  \\&\quad= \beta y(1-y^\delta)(y^\delta-\gamma) +f_\infty + u\chi_{\mathcal{O}} \ \text{ in }\  \Omega\times (0,\infty), \\
		& y=0  \ \text{ on }\  \Gamma\times (0,\infty), \\
		& y(x,0)=y_0(x) \ \text{ in }\ \Omega,
	\end{aligned}\right.
\end{equation}
where $f_\infty \in L^2(\Omega)$ is a given stationary force term, 
around zero and a non-constant steady state $y_\infty,$ the solution of
\begin{equation} \label{eq:GBHE-ST-intro}
	\left\{
	\begin{aligned}
		&  - \eta \Delta y_\infty +  \alpha y_\infty^\delta \sum_{i=1}^d \frac{\partial y_\infty}{\partial x_i}   + \beta y_\infty(y_\infty^\delta-1)(y_\infty^\delta-\gamma) - \frac{\kappa}{\lambda}\Delta y_\infty =f_\infty  \ \text{ in }\  \Omega, \\
		& y_\infty=0  \ \text{ on }\  \Gamma.
	\end{aligned}\right.
\end{equation}
For any $f_{\infty}\in L^2(\Omega)$, the existence of a strong solution $y_{\infty}\in H^2(\Omega)\cap H_0^1(\Omega)$  to the problem \eqref{eq:GBHE-ST-intro} is obtained in \cite[Theorem 2.1]{MTMAKRicGBHE}.  Here, we skip the details of the stabilizability around zero steady state but we focus on the methodology of the stabilizability around non-constant steady state. Note that stabilizability of $y$ around the steady state $y_\infty$ means to show the stabilizability of showing $y-y_\infty$ around zero. Hence, it is meaningful to introduce $w:=y-y_\infty$ and discuss the stabilizability of $w$ around zero, where the equation satisfied by $w$ is
\begin{equation}  \label{eq:GBHE-Lin-w-y_inft-intro}
	\left\{
	\begin{aligned}
		&  w_t -\eta \Delta w  + \alpha \left( (w+y_\infty)^\delta \nabla (w+y_\infty) \cdot \boldsymbol{1} - y_\infty^\delta \nabla y_\infty\cdot \boldsymbol{1} \right)  - \kappa \int_0^t e^{-\lambda(t-s)}\Delta w(\cdot,s) ds \\
		& \  + \beta \left( (w+y_\infty) ((w+y_\infty)^\delta -1) ((w+y_\infty)^\delta -\gamma) - y_\infty (y_\infty^\delta -1) (y_\infty^\delta -\gamma)  \right)\\&\ + \frac{\kappa}{\lambda}e^{-\lambda t}\Delta y_\infty =u \chi_{\mathcal{O}}  \ \text{ in }\  \, \Omega\times(0,\infty),\\
		&   w(x,t)=0 \ \text{ on }\   \Gamma\times (0,\infty),\\
		&w(x,0)=y_0(x)-y_\infty(x)  \ \text{ in }\  \, \Omega.
	\end{aligned}
	\right.
\end{equation}
Here, $\boldsymbol{1}$ denotes the vector $(1,1,\ldots,1)^{\top} \in \mathbb{R}^d.$ Our first aim is to consider a principal system (see \cite{WMMTMNSEm})
\begin{equation}  
	\left\{
	\begin{aligned}
		&  w_t -\eta \Delta w +\beta\gamma w  - \kappa \int_0^t e^{-\lambda(t-s)}\Delta w(\cdot,s) ds =u \chi_{\mathcal{O}} \ \text{ in }\   \Omega\times (0,\infty),\\
		&   w(x,t)=0\  \text{ on } \  \Gamma\times (0,\infty),\\
		&w(x,0)=y_0(x)-y_\infty=:w_0 \ \text{ in} \  \Omega,
	\end{aligned}
	\right.
\end{equation}
and study the feedback stabilizability by writing it in a coupled equation of the form 
\begin{align*}
	X'(t)=\Af X(t) + \Bf u(t) \text{ for all }t>0, \quad X(0)=X_0,
\end{align*}
in $\Hf,$ with $X(t)=\begin{pmatrix} w(\cdot,t) \\ v(\cdot,t) \end{pmatrix},$ where $\Af$ is the same as in  \eqref{eqdef:A}, $\Bf=\begin{pmatrix} u \chi_{\mathcal{O}} \\ 0 \end{pmatrix}$ is the control operator, $X_0=\begin{pmatrix} y_0-y_\infty \\ 0 \end{pmatrix},$ and $v(\cdot,t)=\int_0^t e^{-\lambda(t-s)}w(\cdot,s) ds.$
The stabilizability of such system is available in literature (see \cite{WKR}) and achieved using semigroup theory, leveraging the spectral properties of 
$\Af$ and verifying the Hautus condition. This is  notably, since the operator $\Af$ is the same as in \cite{WKR} up to a bounded perturbation, the stabilizability result closely follows from \cite{WKR} (exactly same if we take $\kappa=1$ and $\beta=0$ (or $\gamma=0$)) and therefore the properties of $\Af$ are analogous with the one in \cite{WKR}. Here, we simply state this result for the principal system. Subsequently, we establish a regularity result for the full nonlinear system \eqref{eq:GBHE-Lin-w-y_inft-intro} and develop a local stabilization result using the feedback operator obtained for the case of principal system.
    
We use finite element method to verify our results (both zero and non-constant steady states) by considering some numerical examples in respective subsections of this article.

This article introduces a novel approach by offering a concise alternative proof for the existence and uniqueness of strong solutions to GBHE with memory. The techniques and proofs presented here are extendable to various other evolution equations with memory, establishing their well-posedness. Furthermore, a pioneering aspect of this study involves analyzing the stabilizability of nonlinear parabolic equations with memory around non-constant steady states, which has not been extensively explored in the existing literature. The inclusion of nonlinearities in exploring feedback stabilizability in both two and three dimensions adds further intrigue, paving the way for future investigations into the stabilizability of diverse nonlinear evolution equations, whether with or without memory, around non-constant steady states.

\subsection{Preliminaries and notations} 
We provide the function spaces needed to obtain the required results and some basic inequalities in this section. Let $L^p(\Omega)$ denote the space of equivalence classes of Lebesgue measurable functions $f: \Omega \to \mathbb{R}$ such that $ \int_{\Omega} |f(x)|^p \, dx < \infty.$
Two measurable functions are equivalent if they are equal almost everywhere. The $L^p$-norm of $f \in L^p(\Omega)$ is defined as
$ \|f\|_{L^p(\Omega)} := \left( \int_{\Omega} |f(x)|^p \, dx \right)^{1/p}. $ For $p = 2$, $L^2(\Omega)$ forms a Hilbert space where the inner product $(\cdot, \cdot)$ is given by
$ (f,g) = \int_{\Omega} f(x) g(x) \, dx, $ and the corresponding norm is denoted by $\|\cdot\|$. Moreover, $H_0^1(\Omega)$, also denoted by  $W^{1,2}_0(\Omega)$, represents the Sobolev space defined as the set of equivalence classes of Lebesgue measurable functions $f \in L^2(\Omega)$ with weak derivatives $\nabla f \in L^2(\Omega)$ and zero trace. The norm in $H_0^1(\Omega)$ is given by
$ \|f\|_{H_0^1(\Omega)} := \left( \int_{\Omega} |\nabla f(x)|^2 \, dx \right)^{1/2}, $
utilizing the Poincar\'e inequality. We define $H^{-1}(\Omega) := (H_0^1(\Omega))'$, the dual of $H_0^1(\Omega)$, with norm
\[ \|g\|_{H^{-1}(\Omega)} := \sup \left\{ \langle g, f \rangle : f \in H_0^1(\Omega), \|f\|_{H_0^1(\Omega)} \leq 1 \right\}, \]
where $\langle \cdot, \cdot \rangle$ denotes the duality pairing between $H_0^1(\Omega)$ and $H^{-1}(\Omega)$. 
We denote second-order Hilbertian Sobolev spaces by $H^2(\Omega)$, and in general, Sobolev spaces by $W^{m,p}(\Omega)$, where $m \in \mathbb{N}$ and $p \in [1, \infty)$. 
We recall some useful inequalities that is used in the paper frequently. 

\noindent \textbf{Young's inequality.} Let $a,b$ be any non-negative real numbers. Then for any $\varepsilon>0,$ the following inequalities hold:
\begin{align} \label{eqPR-YoungIneq}
	ab \le \frac{\varepsilon a^2}{2} + \frac{b^2}{2\varepsilon} \text{ and } ab \le \frac{a^p}{p}+\frac{b^q}{q},
\end{align}
for any $p,q>1$ such that $\frac{1}{p}+\frac{1}{q}=1.$
\begin{Proposition}[Generalized H\"older's inequality] \label{ppsPR-GenHoldIneq}
	Let $f\in L^p(\Omega),$ $g\in L^q(\Omega),$ and $h\in L^r(\Omega),$ where $1\le p, q ,r\leq \infty$ are such that $\frac{1}{p}+\frac{1}{q}+\frac{1}{r}=1.$ Then $fgh\in L^1(\Omega)$ and 
	\begin{align*}
		\|fgh\|_{L^1(\Omega)} \le \|f\|_{L^p(\Omega)} \|g\|_{L^q(\Omega)} \|h\|_{L^r(\Omega)}.
	\end{align*} 
\end{Proposition}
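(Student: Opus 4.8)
The plan is to reduce the three-function statement to the classical two-function Hölder inequality, which we take as known (it follows from Young's inequality \eqref{eqPR-YoungIneq}), applied twice with a suitably chosen intermediate exponent. First assume that $p,q,r$ are all finite; the cases where one or more of them equals $\infty$ are disposed of separately at the end. Introduce $s\in[1,\infty)$ defined by $\frac1s=\frac1q+\frac1r$, i.e. $s=\frac{qr}{q+r}$. The hypothesis $\frac1p+\frac1q+\frac1r=1$ then reads $\frac1p+\frac1s=1$, so $p$ and $s$ are conjugate exponents, while $\frac sq+\frac sr=1$ shows that $q/s$ and $r/s$ are also a conjugate pair in $[1,\infty)$.

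The first step is to show $gh\in L^s(\Omega)$ with $\|gh\|_{L^s(\Omega)}\le\|g\|_{L^q(\Omega)}\|h\|_{L^r(\Omega)}$. Since $|g|^s\in L^{q/s}(\Omega)$, $|h|^s\in L^{r/s}(\Omega)$, and $q/s,r/s$ are conjugate, the two-function Hölder inequality gives $\int_\Omega|g|^s|h|^s\,dx\le\big\||g|^s\big\|_{L^{q/s}(\Omega)}\big\||h|^s\big\|_{L^{r/s}(\Omega)}=\|g\|_{L^q(\Omega)}^s\|h\|_{L^r(\Omega)}^s$, and taking $s$-th roots proves the claim. The second step applies the two-function Hölder inequality once more, now to $f\in L^p(\Omega)$ and $gh\in L^s(\Omega)$ with the conjugate pair $(p,s)$: $\|fgh\|_{L^1(\Omega)}\le\|f\|_{L^p(\Omega)}\|gh\|_{L^s(\Omega)}\le\|f\|_{L^p(\Omega)}\|g\|_{L^q(\Omega)}\|h\|_{L^r(\Omega)}$, which in particular shows $fgh\in L^1(\Omega)$ and is exactly the asserted estimate.

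For the boundary cases, if for instance $r=\infty$ then $\frac1p+\frac1q=1$ and $|h(x)|\le\|h\|_{L^\infty(\Omega)}$ for a.e. $x$, so $\|fgh\|_{L^1(\Omega)}\le\|h\|_{L^\infty(\Omega)}\|fg\|_{L^1(\Omega)}\le\|h\|_{L^\infty(\Omega)}\|f\|_{L^p(\Omega)}\|g\|_{L^q(\Omega)}$ by the two-function Hölder inequality; the subcases $p=\infty$ or $q=\infty$ are symmetric, and if two of the exponents are $\infty$ the third must equal $1$ and the bound is immediate after pulling out both $L^\infty$ factors. There is essentially no obstacle in this argument: the only point requiring care is the bookkeeping of exponents — checking that $(p,s)$ and $(q/s,r/s)$ are genuine conjugate pairs — together with the separate handling of infinite exponents, where the intermediate exponent $s$ would otherwise degenerate.
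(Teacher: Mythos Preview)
Your proof is correct and follows the standard route: reduce to two applications of the classical two-function H\"older inequality via an intermediate exponent $s$ with $\frac{1}{s}=\frac{1}{q}+\frac{1}{r}$, and handle the infinite-exponent cases separately. The paper itself does not supply a proof of this proposition --- it is stated in the preliminaries as a known inequality --- so there is nothing to compare against; your argument is the usual textbook one and is fine as written.
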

\begin{Lemma}[Agmons' inequality {\n\cite{Agmon10}}] \label{lemVB:AgmonIE}
	Let $\Omega$ be an open bounded domain in $\mathbb{R}^d,$  and let $f \in H^{s_2}(\Omega).$ Let $s_1,s_2$ be such that $s_1<\frac{d}{2}<s_2.$ If for $0<\theta<1,$ $\frac{d}{2}=\theta s_1+(1-\theta)s_2,$ then there exists a positive constant $C_a=C_a(\Omega)$ such that
	\begin{equation} \label{eqVB-AgmonIE}
		\|f\|_{L^\infty(\Omega)} \le C_a \|f\|_{H^{s_1}(\Omega)}^{\theta} \|f\|_{H^{s_2}(\Omega)}^{1-\theta}.
	\end{equation}
\end{Lemma}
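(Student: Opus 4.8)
The plan is to reduce the estimate to the whole space $\mathbb{R}^d$ and then prove it there by a frequency-splitting (Littlewood--Paley type) argument. Since $\Gamma$ is $C^2$, in particular Lipschitz, there is a bounded linear extension operator $E$ that maps $H^{s_1}(\Omega)$ and $H^{s_2}(\Omega)$ boundedly into $H^{s_1}(\mathbb{R}^d)$ and $H^{s_2}(\mathbb{R}^d)$, with $\|Ef\|_{H^{s_j}(\mathbb{R}^d)}\le c_j\|f\|_{H^{s_j}(\Omega)}$ for $j=1,2$ and $Ef=f$ on $\Omega$; a convenient choice is Stein's universal extension. Setting $g:=Ef$, so that $\|f\|_{L^\infty(\Omega)}\le\|g\|_{L^\infty(\mathbb{R}^d)}$, it then suffices to establish $\|g\|_{L^\infty(\mathbb{R}^d)}\le C\,\|g\|_{H^{s_1}(\mathbb{R}^d)}^{\theta}\|g\|_{H^{s_2}(\mathbb{R}^d)}^{1-\theta}$ for $g\in H^{s_2}(\mathbb{R}^d)$.

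Working on $\mathbb{R}^d$ with the Fourier transform $\widehat g$, I would first observe that $\widehat g\in L^1(\mathbb{R}^d)$: by Cauchy--Schwarz, $\int_{\mathbb{R}^d}|\widehat g|\,d\xi\le\big(\int_{\mathbb{R}^d}(1+|\xi|^2)^{-s_2}d\xi\big)^{1/2}\|g\|_{H^{s_2}}$, and the first factor is finite because $2s_2>d$. Hence, up to a dimensional constant, $\|g\|_{L^\infty(\mathbb{R}^d)}\le\|\widehat g\|_{L^1(\mathbb{R}^d)}$ by Fourier inversion. For any $R\ge 1$ I split $\|\widehat g\|_{L^1}=\int_{|\xi|\le R}|\widehat g|\,d\xi+\int_{|\xi|>R}|\widehat g|\,d\xi$ and apply Cauchy--Schwarz to each piece against the Bessel weights $(1+|\xi|^2)^{-s_1}$ and $(1+|\xi|^2)^{-s_2}$ respectively. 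Using $s_1<d/2$ one gets $\int_{|\xi|\le R}(1+|\xi|^2)^{-s_1}d\xi\le C\,R^{\,d-2s_1}$, and using $s_2>d/2$ one gets $\int_{|\xi|>R}(1+|\xi|^2)^{-s_2}d\xi\le C\,R^{\,d-2s_2}$; combining these yields $\|g\|_{L^\infty(\mathbb{R}^d)}\le C\big(R^{\,d/2-s_1}\|g\|_{H^{s_1}}+R^{\,d/2-s_2}\|g\|_{H^{s_2}}\big)$.

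Finally I would optimize over $R$. Since $d/2-s_1>0>d/2-s_2$ and $\|g\|_{H^{s_1}}\le\|g\|_{H^{s_2}}$ (so the choice below is $\ge1$), taking $R=\big(\|g\|_{H^{s_2}}/\|g\|_{H^{s_1}}\big)^{1/(s_2-s_1)}$ balances the two terms, and a short computation shows each term then equals $\|g\|_{H^{s_1}}^{\theta}\|g\|_{H^{s_2}}^{1-\theta}$ with $\theta=\frac{s_2-d/2}{s_2-s_1}$ and $1-\theta=\frac{d/2-s_1}{s_2-s_1}$ — precisely the exponents fixed by the relation $\frac d2=\theta s_1+(1-\theta)s_2$. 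Combining with the extension bounds gives \eqref{eqVB-AgmonIE} with $C_a=C_a(\Omega,d,s_1,s_2)$ (the case $g\equiv0$ is trivial, and when $s_1=0$ one may simply take $E$ to be extension by zero). The point requiring the most care is the frequency split: the value $d/2$ is genuinely critical and the inequality fails if one only assumes $g\in H^{d/2}$, so both hypotheses $s_1<d/2$ (controlling low frequencies) and $s_2>d/2$ (controlling high frequencies) must be used in an essential way; a secondary technical issue is the simultaneous $H^{s_1}$--$H^{s_2}$ boundedness of the extension operator, which is why regularity of $\Gamma$ is invoked.
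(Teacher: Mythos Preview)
Your argument is correct and is in fact the standard proof of Agmon's inequality via extension to $\mathbb{R}^d$ followed by a high/low frequency split on the Fourier side. The paper, however, does not give any proof of this lemma: it is simply quoted from the reference \cite{Agmon10} as a known preliminary result, so there is nothing to compare against. Your write-up actually goes beyond what the paper does here; the only minor comment is that the constant $C_a$ you obtain depends on $d,s_1,s_2$ in addition to $\Omega$, which is harmless but slightly more than the paper's notation $C_a=C_a(\Omega)$ suggests.
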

In the following lemma, we recall Sobolev embedding \cite[Theorem 2.4.4]{Kes} in our context.
\begin{Lemma}[Sobolev embedding] \label{lemPR:SobEmb}
	Let $\Omega$ be an open bounded domain in $\mathbb{R}^d$ of class $C^1$  with $d\in \mathbb{N}.$ Then, we have the following continuous inclusion with constant $C_s$:
	\begin{itemize}
		\item[$(a)$] $H^1(\Omega)\hookrightarrow L^p(\Omega)$ for $d>2,$ where $p=\frac{2d}{d-2},$
		\item[$(b)$] $H^1(\Omega) \hookrightarrow L^q(\Omega)$ for all $d=2\le q<\infty.$ 
	\end{itemize}
\end{Lemma}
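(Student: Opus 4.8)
The final statement is Lemma~\ref{lemPR:SobEmb}, the Sobolev embedding lemma. I will sketch a proof plan for this standard result.

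\medskip

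The plan is to derive this from the general Sobolev embedding theorem as stated in \cite[Theorem 2.4.4]{Kes}, specializing the parameters to our setting. The general theorem asserts that for a bounded domain $\Omega \subset \mathbb{R}^d$ of class $C^1$ and $1 \le p < d$, one has the continuous inclusion $W^{1,p}(\Omega) \hookrightarrow L^{p^*}(\Omega)$ with the critical Sobolev exponent $p^* = \frac{dp}{d-p}$, while for $p = d$ one has $W^{1,d}(\Omega) \hookrightarrow L^q(\Omega)$ for every $q \in [1,\infty)$ (but not in general for $q = \infty$). The first step is simply to take $p = 2$ in this statement.

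\medskip

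For part $(a)$, I would observe that when $d > 2$ we have $p = 2 < d$, so the subcritical regime applies and the critical exponent is $p^* = \frac{2d}{d-2}$, which is exactly the exponent claimed; thus $H^1(\Omega) = W^{1,2}(\Omega) \hookrightarrow L^{2d/(d-2)}(\Omega)$ with some embedding constant $C_s$ depending only on $d$ and $\Omega$. For part $(b)$, when $d = 2$ we are in the borderline case $p = d = 2$, so the general theorem gives $W^{1,2}(\Omega) \hookrightarrow L^q(\Omega)$ for all $q \in [1,\infty)$; since $\Omega$ is bounded, the inclusion $L^q(\Omega) \hookrightarrow L^{q'}(\Omega)$ holds for $q' \le q$, so it suffices to record the statement for $2 \le q < \infty$, which is what is needed later in the paper (e.g.\ to handle the nonlinear terms of \eqref{eq:GBHE-u-coup} in dimension two). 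The embedding constant $C_s$ in general depends on $q$, $d$, and $\Omega$; this dependence is harmless for our applications since $q$ is fixed in each use.

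\medskip

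There is essentially no obstacle here: the lemma is a direct citation of a textbook result with the parameters specialized. The only point requiring a moment's care is that in the case $d = 2$ one must \emph{not} claim $q = \infty$ — the embedding $H^1(\Omega) \hookrightarrow L^\infty(\Omega)$ fails in two dimensions — which is why the statement restricts to $q < \infty$; for the $L^\infty$-type control in two dimensions one instead invokes Agmon's inequality (Lemma~\ref{lemVB:AgmonIE}). Since the excerpt presents the lemma as a recollection of \cite[Theorem 2.4.4]{Kes}, the "proof" is just this identification of exponents, and I would present it as such rather than reproving the Sobolev inequality from scratch via, e.g., the Gagliardo--Nirenberg--Sobolev argument.
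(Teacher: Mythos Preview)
Your proposal is correct and matches the paper's treatment: the paper does not prove Lemma~\ref{lemPR:SobEmb} at all but simply recalls it as \cite[Theorem 2.4.4]{Kes}, and your sketch is precisely the identification of exponents that justifies that citation.
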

In the next theorem, we recall the important inequality, known as the Gagliardo-Nirenberg inequality in the case of bounded domains with smooth boundary.
\begin{Theorem}[Gagliardo-Nirenberg inequality {\normalfont\cite{Nirenberg1959}}] \label{thm:G-NinBdd}
	Let \( \Omega \subset \mathbb{R}^d \) be a bounded domain with a smooth boundary. For any \( u \in W^{m,q}(\Omega) \), and for any integers \( j \) and \( m \) satisfying \( 0 \leq j < m \), the following inequality holds:
	\[
	\| D^j u \|_{L^p(\Omega)} \leq C \|  u \|_{W^{m,q}(\Omega)}^\theta \| u \|_{L^r(\Omega)}^{1-\theta} ,
	\]
	where \( D^j u \) denotes the \( j \)-th order weak derivative of \( u \), and:
	\[
	\frac{1}{p} = \frac{j}{d}+\theta \left( \frac{1}{q} - \frac{m}{d} \right) +  \frac{(1-\theta)}{r},
	\]
	for some constant \( C \) that depends on the domain \( \Omega \) but not on \( u \), where \( \theta \in [0,1] \).
\end{Theorem}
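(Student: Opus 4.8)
Since this inequality is classical (the paper recalls it from \cite{Nirenberg1959}), the shortest plan is simply to cite that source; for completeness I sketch how one reconstructs the bounded-domain version stated here.

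\textbf{Reduction to the whole space.} As $\Omega$ is bounded with $C^\infty$ boundary, there is a Stein-type extension operator $E\colon W^{m,q}(\Omega)\to W^{m,q}(\mathbb{R}^d)$ with $Eu=u$ on $\Omega$ that is simultaneously bounded $W^{k,s}(\Omega)\to W^{k,s}(\mathbb{R}^d)$ for every integer $0\le k\le m$ and every exponent $s$ appearing in the statement. Passing to $Eu$ and using $\|D^j u\|_{L^p(\Omega)}\le\|D^j(Eu)\|_{L^p(\mathbb{R}^d)}$, $\|Eu\|_{W^{m,q}(\mathbb{R}^d)}\le C\|u\|_{W^{m,q}(\Omega)}$ and $\|Eu\|_{L^r(\mathbb{R}^d)}\le C\|u\|_{L^r(\Omega)}$, it suffices to prove the inequality on $\mathbb{R}^d$, and by density only for $u\in C_c^\infty(\mathbb{R}^d)$. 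A scaling check ($u\mapsto u(\lambda\,\cdot)$) shows that the stated exponent identity is precisely the condition making the two sides transform in the same way, so the estimate is at least dimensionally admissible.

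\textbf{The scaling-critical case $\theta=j/m$.} The core is the homogeneous bound
\[
\|D^j u\|_{L^p(\mathbb{R}^d)}\le C\,\|D^m u\|_{L^q(\mathbb{R}^d)}^{\,j/m}\,\|u\|_{L^r(\mathbb{R}^d)}^{\,1-j/m}.
\]
I would first establish it in one variable: integrating $\int|u'|^p$ by parts and applying the generalized H\"older inequality (Proposition \ref{ppsPR-GenHoldIneq}) with the three factors $u$, $|u'|^{p-2}$, $u''$ — whose exponents close exactly because of the stated relation — gives $\|u'\|_{L^p(\mathbb{R})}\le C\|u''\|_{L^q(\mathbb{R})}^{1/2}\|u\|_{L^r(\mathbb{R})}^{1/2}$, and iterating this elementary step yields the one-dimensional inequality for all $0\le j<m$. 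To pass to $\mathbb{R}^d$, apply the one-variable estimate along each coordinate direction, integrate over the remaining variables, and recombine the $d$ directional bounds with the generalized H\"older inequality together with the elementary multilinear (Gagliardo/Loomis--Whitney) inequality $\big\|\prod_{i=1}^d g_i\big\|_{L^1(\mathbb{R}^d)}\le\prod_{i=1}^d\|g_i\|_{L^{d-1}(\mathbb{R}^{d-1})}$, valid whenever each $g_i$ is independent of $x_i$; alternatively, for $1<p,q,r<\infty$, a Littlewood--Paley decomposition combined with real interpolation between homogeneous Sobolev spaces gives the same bound. The configuration $\tfrac1q-\tfrac{m-j}{d}\le 0$ is exceptional and forces $\theta<1$, and $p=\infty$ must be treated separately by Morrey/Agmon-type estimates (cf.\ Lemma \ref{lemVB:AgmonIE}).

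\textbf{General $\theta$, bounded domain, and conclusion.} For $\theta\in[j/m,1]$ the inequality follows by H\"older interpolation in the exponent between the critical estimate above and the trivial bound, and bounding $\|D^m u\|_{L^q}\le\|u\|_{W^{m,q}}$ produces the stated right-hand side. Since $\Omega$ is bounded one also has $\|u\|_{L^{r_1}(\Omega)}\le C\|u\|_{L^{r_2}(\Omega)}$ for $r_1\le r_2$ and the Sobolev embeddings of Lemma \ref{lemPR:SobEmb}; feeding these into the previous step covers the remaining admissible $\theta\in[0,1]$ compatible with the exponent identity, and undoing the extension of the first step returns the estimate on $\Omega$. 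The one genuinely delicate point is the multidimensional recombination in the second step — keeping the exponent arithmetic consistent when the $d$ one-variable inequalities are merged, and disposing of the endpoint and exceptional cases ($p=\infty$, $q=d/(m-j)$); the extension, the interpolation in $\theta$, and the bounded-domain inclusions are all routine.
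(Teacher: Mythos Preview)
The paper does not prove this theorem at all; it is merely recalled in the preliminaries with a citation to \cite{Nirenberg1959} and then used elsewhere. Your proposal therefore goes well beyond what the paper does, which is simply to quote the result.

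As a sketch of the classical argument your outline is sound: extend to $\mathbb{R}^d$ via a Stein operator, establish the one-dimensional $j=1$, $m=2$ case by integration by parts and H\"older, iterate to general $j<m$, recombine directions via the Loomis--Whitney/Gagliardo lemma (or Littlewood--Paley), and finally interpolate in $\theta$ and use bounded-domain inclusions. One caveat: the admissible range in the standard Gagliardo--Nirenberg theorem is $\theta\in[j/m,1]$ (with the usual exceptional endpoint restrictions), not the full interval $[0,1]$ as written in the statement you were asked to prove; your phrase ``the remaining admissible $\theta\in[0,1]$ compatible with the exponent identity'' implicitly acknowledges this, but it would be cleaner to state the constraint explicitly rather than appeal to bounded-domain $L^p$ inclusions to manufacture values $\theta<j/m$, since for those values the exponent identity generally forces $p$ outside the range where the inequality holds. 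That said, for the purpose of matching the paper, a one-line citation to \cite{Nirenberg1959} is all that is expected.
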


\begin{Lemma}[General Gronwall inequality \cite{Canon99}] \label{lem:Gronwall} 
	Let $f,g, h$ and $y$ be four locally integrable non-negative functions on $[t_0,\infty)$ such that
	\begin{align*}
		y(t)+\int_{t_0}^t f(s)ds \le C+ \int_{t_0}^t h(s)ds + \int_{t_0}^t g(s)y(s)ds\  \text{ for all }t\ge t_0,
	\end{align*}
	where $C\ge 0$ is any constant. Then 
	\begin{align*}
		y(t)+\int_{t_0}^t f(s)ds \le \left(C+\int_{t_0}^t h(s)ds\right) \exp\left( \int_{t_0}^t g(s)ds\right) \text{ for all }t\ge t_0.
	\end{align*}
\end{Lemma}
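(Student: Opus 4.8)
The plan is to reduce the integral inequality to a scalar linear differential inequality for its right-hand side and then integrate that via the classical integrating-factor argument.

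First I would introduce the majorant
\[ G(t) := C + \int_{t_0}^t h(s)\,ds + \int_{t_0}^t g(s) y(s)\,ds, \qquad t \ge t_0, \]
where the last integral is finite for every $t$ since it already appears in the hypothesis. Because $f \ge 0$, the hypothesis gives $y(t) + \int_{t_0}^t f(s)\,ds \le G(t)$ and, a fortiori, $y(t) \le G(t)$ for all $t \ge t_0$; hence it suffices to prove $G(t) \le \big(C + \int_{t_0}^t h(s)\,ds\big)\exp\big(\int_{t_0}^t g(s)\,ds\big)$. Since $G$ is the indefinite integral of the locally integrable function $h + g y$, it is locally absolutely continuous on $[t_0,\infty)$ and, using $y(t) \le G(t)$ and $g(t) \ge 0$,
\[ G'(t) = h(t) + g(t) y(t) \le h(t) + g(t) G(t) \qquad \text{for a.e. } t \ge t_0. \]

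Next I would set $\mu(t) := \exp\big(-\int_{t_0}^t g(s)\,ds\big)$. Since $\int_{t_0}^{\cdot} g$ is locally absolutely continuous and $\exp$ is locally Lipschitz, $\mu$ is locally absolutely continuous with $\mu'(t) = -g(t)\mu(t)$ a.e.; moreover $0 < \mu(t) \le 1$ because $g \ge 0$. Then $\mu G$ is locally absolutely continuous, the product rule holds almost everywhere, and
\[ (\mu G)'(t) = \mu(t)\big(G'(t) - g(t) G(t)\big) \le \mu(t) h(t) \qquad \text{for a.e. } t \ge t_0. \]
Integrating from $t_0$ to $t$ and using $\mu(t_0) = 1$, $G(t_0) = C$, together with $\mu(s) \le 1$, yields
\[ \mu(t) G(t) \le C + \int_{t_0}^t \mu(s) h(s)\,ds \le C + \int_{t_0}^t h(s)\,ds. \]
Dividing by $\mu(t) > 0$ gives the claimed bound on $G(t)$, and combining it with $y(t) + \int_{t_0}^t f(s)\,ds \le G(t)$ completes the proof.

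The argument is essentially routine; the only point requiring a little care is the measure-theoretic bookkeeping, namely that $G$, $\mu$, and $\mu G$ are locally absolutely continuous so that the fundamental theorem of calculus and the product rule are valid almost everywhere — this is where the mere local integrability of the data (as opposed to continuity) has to be handled. If one wishes to sidestep this entirely, one can first establish the estimate assuming $g$ and $h$ continuous, so that $G \in C^1$ and the computation is classical, and then recover the general case by approximating $g$ and $h$ in $L^1_{\mathrm{loc}}$ and passing to the limit; however, the absolute-continuity route above is the shortest.
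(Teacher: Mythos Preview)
Your proof is correct and is the standard integrating-factor argument for Gronwall's inequality. Note, however, that the paper does not actually prove this lemma: it is merely stated with a citation to \cite{Canon99}, so there is no ``paper's own proof'' to compare against. Your argument is exactly the kind of proof one finds in the cited reference, and the care you take with absolute continuity is appropriate given that only local integrability is assumed.
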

Now, we state a version of nonlinear generalization of Gronwall’s inequality that will be useful in our later analysis.
\begin{Theorem}{(A nonlinear generalization of Gronwall’s inequality \cite[Theorem 21]{DragSS}).}\label{lem:NonlinGronwall}
Let $\zeta(t)$ be a non-negative function that satisfies the integral inequality
\begin{align*}
	\zeta(t)\le c+ \int_{t_0}^t \left(  a(s)\zeta(s) + b(s)\zeta^{\varepsilon}(s) \right) ds, c\ge 0, \, 0\le \varepsilon<1,
\end{align*}
where $a$ and $b$ are locally integrable non-negative functions on $[t_0,\infty).$ Then, the following inequality holds: 
\begin{align*}
	\zeta(t) \le \left\lbrace c^{1-\varepsilon} \exp{\left[(1-\varepsilon)\int_{t_0}^ta(s)ds\right]} + (1-\varepsilon)\int_{t_0}^t b(s) \exp{\left[(1-\varepsilon)\int_{s}^t a(r)dr \right]} ds\right\rbrace^{\frac{1}{1-\varepsilon}}.
\end{align*}
\end{Theorem}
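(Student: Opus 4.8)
The plan is to reduce the integral inequality to a linear differential inequality by a Bernoulli-type substitution, in the spirit of the classical Bihari--LaSalle argument.

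First I would set
\[
u(t):=c+\int_{t_0}^t\big(a(s)\zeta(s)+b(s)\zeta^{\varepsilon}(s)\big)\,ds,
\]
which is an absolutely continuous, non-decreasing, non-negative function on $[t_0,\infty)$ with $u(t_0)=c$ and $\zeta(t)\le u(t)$. Since $0\le\varepsilon<1$, the map $x\mapsto x^{\varepsilon}$ is non-decreasing on $[0,\infty)$, so $\zeta^{\varepsilon}(s)\le u^{\varepsilon}(s)$; combined with $a,b\ge 0$ and $\zeta\le u$ this yields, for a.e.\ $t$,
\[
u'(t)=a(t)\zeta(t)+b(t)\zeta^{\varepsilon}(t)\le a(t)u(t)+b(t)u^{\varepsilon}(t).
\]

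To linearize I would regularize to avoid division by zero: fix $\varepsilon_0>0$, set $u_{\varepsilon_0}:=u+\varepsilon_0\ge\varepsilon_0>0$ (which satisfies the same differential inequality since $u^{\varepsilon}\le u_{\varepsilon_0}^{\varepsilon}$), and put $v:=u_{\varepsilon_0}^{\,1-\varepsilon}$. Because $x\mapsto x^{1-\varepsilon}$ is Lipschitz (indeed $C^1$) on $[\varepsilon_0,\infty)$ and $u_{\varepsilon_0}$ is absolutely continuous, $v$ is absolutely continuous and the chain rule gives, a.e.,
\[
v'(t)=(1-\varepsilon)u_{\varepsilon_0}^{-\varepsilon}(t)u_{\varepsilon_0}'(t)\le(1-\varepsilon)\big(a(t)u_{\varepsilon_0}^{1-\varepsilon}(t)+b(t)\big)=(1-\varepsilon)\big(a(t)v(t)+b(t)\big).
\]
Multiplying by the integrating factor $\exp\!\big(-(1-\varepsilon)\int_{t_0}^t a(r)\,dr\big)$, integrating from $t_0$ to $t$, and using $v(t_0)=(c+\varepsilon_0)^{1-\varepsilon}$, I obtain
\[
v(t)\le(c+\varepsilon_0)^{1-\varepsilon}\exp\!\Big[(1-\varepsilon)\!\int_{t_0}^t a(r)\,dr\Big]+(1-\varepsilon)\!\int_{t_0}^t b(s)\exp\!\Big[(1-\varepsilon)\!\int_s^t a(r)\,dr\Big]ds.
\]
Raising both sides to the power $1/(1-\varepsilon)$, using $\zeta(t)\le u(t)\le u_{\varepsilon_0}(t)=v(t)^{1/(1-\varepsilon)}$, and finally letting $\varepsilon_0\downarrow 0$ (justified by monotone/dominated convergence in the two integrals) delivers the asserted bound.

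The hard part is not the integrating-factor computation but the soft-analysis bookkeeping: passing rigorously from the integral inequality to the differential inequality when $a,b$ are merely locally integrable (so $u$ is only absolutely continuous, not $C^1$) and handling the possible vanishing of $u$ in the substitution $v=u^{1-\varepsilon}$. The $\varepsilon_0$-regularization disposes of both issues at once, and all limits pass cleanly because every quantity involved is monotone in $\varepsilon_0$.
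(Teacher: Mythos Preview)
Your argument is correct: the Bernoulli substitution $v=u_{\varepsilon_0}^{1-\varepsilon}$ after the $\varepsilon_0$-regularization cleanly linearizes the differential inequality, the integrating-factor step is routine, and the limit $\varepsilon_0\downarrow 0$ passes since the only $\varepsilon_0$-dependence in the final bound is through the continuous term $(c+\varepsilon_0)^{1-\varepsilon}$. Note, however, that the paper does not supply its own proof of this statement; it is quoted verbatim from \cite[Theorem~21]{DragSS} as a preliminary tool, so there is no in-paper argument to compare against---your Bihari--LaSalle proof is precisely the standard one behind that reference.
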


\subsection{Organization} The remainder of this article is organized as follows. In Section \ref{sec:PrincOp}, we revisit the semigroup framework and the spectral properties of the operator $\Af$. Section \ref{sec:wellpsed} presents an alternative proof for the existence and uniqueness of strong solutions to the problem  \eqref{eq:GBHE}. In Section \ref{sec:stabaroundzero}, we investigate the feedback stabilizability of the model problem around the zero steady state. The first subsection addresses the linear case, while the following subsection extends the analysis to the full nonlinear system.  Section \ref{sec:stabaroundNonzero} explores the stabilizability around a non-constant steady state. The article wraps up by providing some numerical simulations to verify the findings for stabilizability around both zero and non-constant steady-state cases in Section \ref{sec:NS}.

\section{Principal operator} \label{sec:PrincOp}
Following \cite{WKR}, we mention that the operator $(\Af, D(\Af))$ generates an analytic semigroup on $\Hf$ and we also discuss the spectral properties of $(\Af, D(\Af))$.

\begin{Theorem}{\n \cite[Proposition 3.1 and Theorem 3.2]{WKR}} \label{th:welPosedLinOp}
	\begin{enumerate}
	\item [(a)]  The operator $(\Af, D(\Af))$ in \eqref{eqdef:A} is densely defined, closed,
	and generates an analytic semigroup $\{e^{t\Af}\}_{t\ge 0}$ on $\Hf.$ \\
	\item [(b)] For any $\Wf_0\in \Hf$ and $ \boldsymbol{f}\in L^2(0,\infty; \Hf),$ the system 
	\begin{align} \label{eq:opform-f}
		\Wf'(t)=\Af \Wf(t) + \boldsymbol{f}(t) \text{ for all }t>0, \quad \Wf(0)=\Wf_0,
	\end{align}
	admits a unique strong solution $\Wf \in L^2(0,\infty; \Hf).$ Moreover, $\Wf$ belongs to $C([0,\infty); \Hf)$ with the representation 
	\begin{align*}
		\Wf(t)=e^{t\Af}\Wf_0 +\int_0^t e^{(t-s)\Af} \boldsymbol{f}(s) ds \text{ for all }t>0.
	\end{align*}
	\end{enumerate}
\end{Theorem}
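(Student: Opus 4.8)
The strategy is to reduce the coupled system to a situation governed by standard analytic semigroup theory, exactly as in \cite{WKR}, since $\Af$ differs from the operator treated there only by a bounded perturbation. First I would treat part (a). The natural change of variables is to introduce $\zeta := \eta w + \kappa v$, which is the combination appearing in the domain constraint $\zeta \in H^2(\Omega)\cap H^1_0(\Omega)$. Writing $-\Delta_D$ for the Dirichlet Laplacian on $L^2(\Omega)$ (a positive self-adjoint operator with compact resolvent generating an analytic semigroup), the first component of $\Af\binom{w}{v}$ is $\Delta\zeta - \beta\gamma w$ and the second is $w - \lambda v$. I would exhibit $\Af = \Af_0 + \mathcal{R}$, where $\Af_0$ has the ``diagonal-in-$\zeta$'' structure whose generator property is established in \cite[Proposition 3.1]{WKR} for the $\kappa=1,\beta=0$ case (and the general $\kappa>0$ is just a rescaling), and $\mathcal{R} = \begin{pmatrix} -\beta\gamma I & 0 \\ 0 & 0 \end{pmatrix}$ is bounded on $\Hf$. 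Since the class of generators of analytic semigroups is stable under bounded perturbation, $(\Af, D(\Af))$ generates an analytic semigroup on $\Hf$; density and closedness of $D(\Af)$ follow because the perturbation does not change the domain, and these were already verified in \cite{WKR}.

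For part (b), with analyticity of $\{e^{t\Af}\}_{t\ge 0}$ in hand, the existence, uniqueness, and variation-of-constants representation
\[
\Wf(t) = e^{t\Af}\Wf_0 + \int_0^t e^{(t-s)\Af}\boldsymbol{f}(s)\,ds
\]
are the standard mild-solution formula; the point is to upgrade ``mild'' to ``strong'' and to get the global-in-time integrability $\Wf \in L^2(0,\infty;\Hf)$. The strong-solution regularity on bounded intervals is classical for analytic semigroups with $L^2$ forcing (maximal regularity / the fact that $e^{t\Af}$ maps into $D(\Af)$ for $t>0$). The global $L^2$-in-time bound is where the spectral information matters: one needs $\{e^{t\Af}\}$ to be exponentially stable, i.e. $\|e^{t\Af}\|_{\mathcal{L}(\Hf)} \le M e^{-\omega t}$ for some $\omega>0$. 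I would obtain this from the spectral picture of $\Af$ — its spectrum consists of eigenvalues obtained by solving, for each Dirichlet eigenvalue $\mu_k>0$ of $-\Delta_D$, the scalar quadratic coming from $\binom{w}{v}$ being an eigenvector; one checks these roots all have real part bounded above by a negative constant (this is exactly the computation done in \cite{WKR}, perturbed by $\beta\gamma$, which only shifts things further left since $\beta,\gamma>0$). Exponential stability of the semigroup then gives, by Young's convolution inequality, $\|\Wf\|_{L^2(0,\infty;\Hf)} \lesssim \|\Wf_0\|_{\Hf} + \|\boldsymbol{f}\|_{L^2(0,\infty;\Hf)}$, and continuity $\Wf \in C([0,\infty);\Hf)$ follows from strong continuity of the semigroup together with the standard estimate on the convolution term.

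The main obstacle is not conceptual but bookkeeping: one must verify carefully that the similarity transformation $\binom{w}{v}\mapsto \binom{\zeta}{v}$ with $\zeta=\eta w+\kappa v$ is a bounded isomorphism of $\Hf$ (it is, with bounded inverse since $\eta>0$), so that generator/spectral properties transfer cleanly, and then that the $\beta\gamma$-term genuinely only improves stability rather than creating eigenvalues with positive real part. Given that $\Af$ here is, in the words of the introduction, ``the same as in \cite{WKR} up to a bounded perturbation,'' I expect the cleanest write-up to simply invoke \cite[Proposition 3.1 and Theorem 3.2]{WKR} for the structural facts and indicate the one-line modification needed to absorb $\beta\gamma I$; the theorem is stated precisely so that it can be cited wholesale in the subsequent fixed-point arguments.
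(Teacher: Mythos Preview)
Your proposal is correct and, for part (a), matches the paper exactly: decompose $\Af = A_1 + A_2$ with $A_1$ the operator from \cite{WKR} and $A_2 = \begin{pmatrix} -\beta\gamma I & 0 \\ 0 & 0\end{pmatrix}$ bounded, then invoke \cite{WKR} plus the bounded-perturbation theorem for analytic generators; density and closedness transfer since the domain is unchanged.

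For part (b) the paper is terser than you are: it simply cites \cite[Part II, Ch.~1, Proposition~3.1]{BDDM} once analyticity is in hand, without spelling out the exponential-stability ingredient. Your sketch (mild solution via variation of constants, upgrade to strong via analytic maximal regularity, global $L^2(0,\infty;\Hf)$ via exponential decay of $\{e^{t\Af}\}$ and Young's convolution inequality) is exactly what that citation is hiding, and is correct. One caution: your remark that the $-\beta\gamma I$ term ``only shifts things further left'' is morally right but not literally a one-line consequence for a non-normal $2\times 2$ block operator; in the paper this is verified by the explicit eigenvalue computation (the later spectral theorem), where one sees directly that $\Re(\mu_k^\pm)<0$ for all $k$. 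So your plan is sound, but in a clean write-up you would either point forward to that computation or, as the paper does, absorb the whole of (b) into a reference.
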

\begin{proof}
	Note that 
	\begin{align*}
		\Af=A_1+A_2, \text{ where } A_1= \begin{pmatrix} \eta \Delta  & \kappa \Delta \\  I & -\lambda I    \end{pmatrix} \text{ with } D(A_1)=D(\Af),
	\end{align*}
	and $A_2= \begin{pmatrix} -\beta\gamma I & 0 \\  0 & 0 \end{pmatrix} $ is bounded operator on $\Hf$. From, \cite[Proposition 3.1 and Theorem 3.2]{WKR}, $(A_1, D(A_1))$ generates an analytic semigroup on $\Hf.$ Now, $(\Af, D(\Af))$ being bounded perturbation of $(A_1,D(A_1)),$ the result \cite[Theorem 12.37]{RROG} yields that $(\Af, D(\Af))$ generates an analytic semigroup $\{ e^{t\Af}\}_{t\ge 0}$ on $\Hf.$ The
	part (b) follows from \cite[Part II,Ch. 1, Proposition 3.1]{BDDM} using part (a).
\end{proof}
Now, we discuss the spectral properties of $(\Af, D(\Af))$ in $\Hf.$ To do that, we recall the eigenvalue problem (see \cite[Theorem 1, Section 6.5]{Eva})
\begin{equation}\label{eq:eigvalLapl}
	\begin{aligned}
		 -\Delta \phi &= \Lambda \phi \ \text{ in }\  \Omega, \\
		 \phi&=0 \ \text{ on }\  \Gamma,
	\end{aligned}
\end{equation}
that has an infinite sequence $\{ \Lambda_k\}_{k\in \mathbb{N}}$ of eigenvalues with 
\begin{align*}
	0<\Lambda_1\le \Lambda_2 \le \cdots \le \Lambda_k \to \infty,
\end{align*}
eigenfunction $\phi_k \in C^\infty(\Omega)\cap H^1_0(\Omega)$ (\cite[Corollary 6.14]{RobiIDDS}) corresponding to $\Lambda_k,$ for each $k\in \mathbb{N},$ and $\left\lbrace \phi_k \right\rbrace_{k\in \mathbb{N}}$ forms an orthonormal basis in $L^2(\Omega).$ Therefore, $\left\lbrace \begin{pmatrix} \phi_k \\ 0 \end{pmatrix},  \begin{pmatrix} 0 \\ \phi_k \end{pmatrix} \, |\, k \in \mathbb{N} \right\rbrace$ is an orthonormal basis for $\Hf.$ 
In the next result, we discuss the eigenvalues of $\Af$ and they also satisfy the similar properties as in \cite[Proposition 3.3]{WKR}.

\begin{Theorem}[Spectral analysis]\label{th:specAna}
	Let $(\Af, D(\Af))$ be defined as in \eqref{eqdef:A}. The eigenvalues of $\Af$  consist of two sequences $\mu_k^+$ and $\mu_k^-,$ $k\in\{1,2,\ldots\}$  given by
	\begin{align}
		\mu_k^\pm = \frac{-(\beta\gamma+\lambda+\eta \Lambda_k) \pm \sqrt{(\beta\gamma+\lambda+\eta \Lambda_k)^2 - 4 (\beta\gamma\lambda+ (\eta \lambda +\kappa)\Lambda_k)}}{2},
	\end{align}
	where $\Lambda_k,$ $k\in\{1,2,\ldots\}$ are eigenvalues of $-\Delta$ as discussed in \eqref{eq:eigvalLapl}. Furthermore, we have the following properties: 
	\begin{itemize}
		\item[(a)] There are only finitely many complex eigenvalues of $\Af.$ In fact, $\mu_k^\pm,$ for all $k ,$ for which $\Lambda_k$ satisfies 
		\begin{align*}
			\frac{(\eta \lambda +2 \kappa -\beta\gamma\eta) - 2 \sqrt{\kappa^2+\kappa \eta \lambda-\beta\gamma\eta\kappa}}{\eta^2} <\Lambda_k < \frac{(\eta \lambda +2 \kappa-\beta\gamma\eta) + 2 \sqrt{\kappa^2+\kappa \eta \lambda-\beta\gamma\eta\kappa}}{\eta^2},
		\end{align*}
		are complex eigenvalues of $\Af.$
		\item[(b)] The sequence $\mu_k^+$ converges to $-\nu_0$ as $k\rightarrow \infty$ while the other sequence $\mu_k^-$ behaves like $-\eta\Lambda_k$ and goes to $-\infty$ as $k\to \infty,$ where
		\begin{align} \label{eq:nu0}
			\nu_0:= \left(\lambda+\frac{\kappa}{\eta}\right).  
		\end{align}
		\item[(c)] All the eigenvalues have negative real part. In particular, there exists $N_1,N_2\in \mathbb{N}$ such that 
		\begin{align*}
			& \text{ for all }k< N_1, \quad  - \nu_0 < \Re(\mu_k^+)<0 ,    \quad \text{ for all }k\ge N_1, \Re(\mu_k^+)< -\nu_0, \\
			& \text{ for all }k< N_2, \quad  - \nu_0 < \Re(\mu_k^-)<0 ,    \quad \text{ for all }k\ge N_2, \Re(\mu_k^-)< - \nu_0.
		\end{align*}
	\end{itemize}
\end{Theorem}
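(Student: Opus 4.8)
The plan is to diagonalize $\Af$ over the eigenbasis of $-\Delta$ and reduce everything to a scalar computation. First I would fix $k\in\N$ and look for eigenvectors of $\Af$ of the form $\begin{pmatrix} a\phi_k \\ b\phi_k \end{pmatrix}$ with $(a,b)\neq(0,0)$. Using the explicit action of $\Af$ in \eqref{eqdef:A} and $-\Delta\phi_k=\Lambda_k\phi_k$, the eigenvalue equation $\Af\begin{pmatrix} a\phi_k \\ b\phi_k\end{pmatrix}=\mu\begin{pmatrix} a\phi_k \\ b\phi_k\end{pmatrix}$ becomes the $2\times 2$ linear system
\begin{align*}
	\begin{pmatrix} -\eta\Lambda_k-\beta\gamma & -\kappa\Lambda_k \\ 1 & -\lambda \end{pmatrix}\begin{pmatrix} a \\ b \end{pmatrix} = \mu \begin{pmatrix} a \\ b \end{pmatrix}.
\end{align*}
Its characteristic polynomial is $\mu^2 + (\beta\gamma+\lambda+\eta\Lambda_k)\mu + (\beta\gamma\lambda + (\eta\lambda+\kappa)\Lambda_k)=0$, whose roots are precisely the claimed $\mu_k^\pm$. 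To see that these are \emph{all} the eigenvalues of $\Af$, I would note that since $\{\phi_k\}$ is an orthonormal basis of $L^2(\Omega)$, every element of $\Hf$ (and of $D(\Af)$, after checking the domain condition $\eta w+\kappa v\in H^2\cap H^1_0$ decomposes along the $\phi_k$) expands in the blocks $\mathrm{span}\{(\phi_k,0)^\top,(0,\phi_k)^\top\}$, each of which is $\Af$-invariant; hence $\sigma(\Af)=\bigcup_k\{\mu_k^+,\mu_k^-\}$, exactly as in \cite[Proposition 3.3]{WKR}.

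For part (a), the discriminant $\Delta_k := (\beta\gamma+\lambda+\eta\Lambda_k)^2 - 4(\beta\gamma\lambda+(\eta\lambda+\kappa)\Lambda_k)$ is a quadratic in $\Lambda_k$ with positive leading coefficient $\eta^2$; expanding, $\Delta_k = \eta^2\Lambda_k^2 - 2(\eta\lambda+2\kappa-\beta\gamma\eta)\Lambda_k + (\beta\gamma-\lambda)^2$. The eigenvalues $\mu_k^\pm$ are non-real exactly when $\Delta_k<0$, i.e. when $\Lambda_k$ lies strictly between the two roots of this quadratic; computing those roots by the quadratic formula gives the stated interval (with the radicand simplifying to $\kappa^2+\kappa\eta\lambda-\beta\gamma\eta\kappa$ — a short algebraic check). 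Since $\Lambda_k\to\infty$, only finitely many $\Lambda_k$ can fall in this bounded interval, so only finitely many eigenvalues are complex.

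For part (b), when $\Delta_k\ge 0$ (all large $k$), write $\mu_k^\pm = \tfrac12\big(-(\beta\gamma+\lambda+\eta\Lambda_k)\pm(\beta\gamma+\lambda+\eta\Lambda_k)\sqrt{1-\Delta_k'}\big)$ where $\Delta_k' = 4(\beta\gamma\lambda+(\eta\lambda+\kappa)\Lambda_k)/(\beta\gamma+\lambda+\eta\Lambda_k)^2 \to 0$. A first-order expansion $\sqrt{1-x}=1-\tfrac{x}{2}+O(x^2)$ then gives $\mu_k^- \sim -(\beta\gamma+\lambda+\eta\Lambda_k)\to-\infty$ with leading term $-\eta\Lambda_k$, while the $+$ root has the leading cancellation and $\mu_k^+ = -\dfrac{\beta\gamma\lambda+(\eta\lambda+\kappa)\Lambda_k}{\beta\gamma+\lambda+\eta\Lambda_k} + o(1) \to -\dfrac{\eta\lambda+\kappa}{\eta} = -\nu_0$. (Equivalently, $\mu_k^+\mu_k^- = \beta\gamma\lambda+(\eta\lambda+\kappa)\Lambda_k$ and $\mu_k^++\mu_k^- = -(\beta\gamma+\lambda+\eta\Lambda_k)$, so $\mu_k^+ = (\mu_k^+\mu_k^-)/(\mu_k^+ + \mu_k^-)\cdot(\ldots)$ — dividing the product by the dominant factor $-\eta\Lambda_k$ of the sum gives the limit directly.) For part (c), I would treat the real and complex cases separately: if $\mu_k^\pm$ are complex conjugates then $\Re(\mu_k^\pm) = -\tfrac12(\beta\gamma+\lambda+\eta\Lambda_k)<0$; if real, both roots are negative because their sum $-(\beta\gamma+\lambda+\eta\Lambda_k)$ is negative and their product $\beta\gamma\lambda+(\eta\lambda+\kappa)\Lambda_k$ is positive (here all of $\beta,\gamma,\lambda,\eta,\kappa,\Lambda_k$ are positive). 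Finally, to get the uniform comparison with $-\nu_0$ and the existence of $N_1,N_2$, I would show that the maps $k\mapsto\Re(\mu_k^+)$ and $k\mapsto\Re(\mu_k^-)$ are eventually monotone in $\Lambda_k$ and cross the level $-\nu_0$ at most once; since $\Re(\mu_k^+)\to-\nu_0$ from below (one checks the sign of $\Re(\mu_k^+)+\nu_0$ for large $\Lambda_k$) and $\Re(\mu_k^-)\to-\infty$, the thresholds $N_1,N_2$ exist. The main obstacle I anticipate is purely bookkeeping: carefully handling the transition region where $\Delta_k$ changes sign, and verifying the one-sided approach to $-\nu_0$ (the inequalities $\Re(\mu_k^+)<0$ for small $k$ versus $<-\nu_0$ for large $k$) rather than any conceptual difficulty — the argument is essentially identical to \cite[Proposition 3.3]{WKR} up to the bounded perturbation $\beta\gamma I$.
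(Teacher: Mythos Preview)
Your proposal is correct and follows exactly the route the paper intends: the paper does not give an independent proof here but simply states that the eigenvalues ``satisfy the similar properties as in \cite[Proposition 3.3]{WKR}'', and your block-diagonalization over $\mathrm{span}\{\phi_k\}$, reduction to the $2\times 2$ characteristic polynomial, discriminant analysis for (a), Vieta/asymptotic expansion for (b), and sum/product sign argument for (c) are precisely how that reference proceeds, adjusted for the bounded perturbation $-\beta\gamma I$. Your anticipated ``bookkeeping'' obstacle about the one-sided approach of $\Re(\mu_k^+)$ to $-\nu_0$ is the only place requiring care, and you have identified it correctly.
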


Next, we calculate the adjoint operator $(\Af^*, D(\Af^*))$ (see \cite[Section A.2]{WKRPCE} for a similar calculation) of $(\Af, D(\Af))$ as
\begin{align} \label{eqdef:A*}
	\Af^* \begin{pmatrix} \phi \\ \psi \end{pmatrix} = \begin{pmatrix} \eta\Delta \phi -\beta\gamma \phi +\psi\\ \kappa \Delta \phi-\lambda \psi \end{pmatrix} \text{ for all } \begin{pmatrix} \phi \\ \psi \end{pmatrix} \in D(\Af^*):=\left\lbrace \begin{pmatrix} \phi \\ \psi \end{pmatrix} \in \Hf \, |\, \phi \in H^2(\Omega)\cap H^1_0(\Omega) \right\rbrace.  
\end{align}
One can choose eigenfunctions $\xi_k^\pm$ of $\Af$ corresponding to eigenvalues $\mu_k^\pm$ as 
\begin{align} \label{eq:eigfun-A}
	\xi_k^\pm = \begin{pmatrix} 1 \\ \frac{1}{\mu_k^\pm+\lambda} \end{pmatrix}\phi_k \text{ for all }k\in \mathbb{N}.
\end{align}
It can be easily seen that the set of eigenvalues of $\Af^*$ is $\left\lbrace \overline{\mu_k^+}, \overline{\mu_k^-}, \, |\, k \in \mathbb{N} \right\rbrace$. For all $k\in \mathbb{N},$ we calculate eigenfunctions $\xi_k^{*+}$ and $\xi_k^{*-}$ of $\Af^*$ corresponding to the eigenvalues $\overline{\mu_k^+}$ and $ \overline{\mu_k^-},$ respectively as
\begin{align} \label{eq:eigfun-A*}
	\xi_k^{*+} =    \frac{\left(\lambda+ \overline{\mu_k^+}\right)^2}{\left(\lambda+ \overline{\mu_k^+}\right)^2-\eta \Lambda_k}  \begin{pmatrix} 1 \\ \frac{-\eta \Lambda_k}{\lambda+ \overline{\mu_k^+}} \end{pmatrix} \phi_k \text{ and } \xi_k^{*-} =    \frac{\left(\lambda+ \overline{\mu_k^-}\right)^2}{\left(\lambda+ \overline{\mu_k^-}\right)^2-\kappa \Lambda_k}  \begin{pmatrix} 1 \\ \frac{-\kappa \Lambda_k}{\lambda+ \overline{\mu_k^-}} \end{pmatrix} \phi_k.
\end{align}
Thus the analysis done in \cite[Section 3.2 -3.3 ]{WKR}  are valid here and consequently we obtain the following result.
\begin{Theorem}
	The family of eigenfunctions of $\Af,$ $\{ \xi_k^+, \xi_k^- \, |\, k \in \mathbb{N} \},$ given in \eqref{eq:eigfun-A}, forms a Riesz basis in $\Hf.$ The same is also true for the family of eigenfunctions in \eqref{eq:eigfun-A*} of $\Af^*,$ $\{ \xi_k^{*+}, \xi_k^{*-} \, |\, k \in \mathbb{N} \}.$ Furthermore, the spectrum of $\Af,$ denoted by $\sigma(\Af),$ is closure to the set of eigenvalues of $\Af,$ that is,
	\begin{align}
		\sigma(\Af):=\text{ closure of } \left\lbrace \mu_k^+, \mu_k^- \, |\, k\in \mathbb{N} \right\rbrace \ \text{ in }\  \mathbb{C} = \left\lbrace \mu_k^+, \mu_k^- \, |\, k\in \mathbb{N} \right\rbrace \cup \{-\nu_0\}.
	\end{align}
\end{Theorem}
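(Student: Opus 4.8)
The plan is to transport the corresponding statements from \cite[Section 3.2--3.3]{WKR} to the present setting, exploiting the fact established in Theorem \ref{th:welPosedLinOp} that $\Af$ differs from the operator $A_1$ of \cite{WKR} only by the bounded, diagonal, self-adjoint perturbation $A_2 = \operatorname{diag}(-\beta\gamma I, 0)$, while the eigenstructure is explicit. First I would record that, by the spectral picture of Theorem \ref{th:specAna}, the eigenvalues $\mu_k^\pm$ are asymptotically separated: $\mu_k^- \sim -\eta\Lambda_k \to -\infty$ with gaps comparable to those of $-\Delta$, whereas $\mu_k^+ \to -\nu_0$, and only finitely many of them are complex. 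Consequently one may split $\Hf$ into a finite-dimensional part (spanned by the finitely many eigenvectors $\xi_k^\pm$ whose eigenvalues are complex or otherwise "exceptional") and its complement. On the finite part, any basis of generalized eigenvectors is trivially a Riesz basis. On the complementary part the eigenvectors $\xi_k^\pm$ in \eqref{eq:eigfun-A} are, up to normalization, perturbations of the orthonormal family $\{(\phi_k,0)^\top,(0,\phi_k)^\top\}$: indeed $\xi_k^+ = (1, (\mu_k^++\lambda)^{-1})^\top \phi_k$ with $(\mu_k^++\lambda)^{-1} \to (\kappa/\eta)^{-1}$ and $\xi_k^- = (1,(\mu_k^-+\lambda)^{-1})^\top \phi_k$ with $(\mu_k^-+\lambda)^{-1} = \mathcal{O}(\Lambda_k^{-1}) \to 0$. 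Because the $2\times 2$ change-of-basis matrix $M_k$ sending $\{(\phi_k,0)^\top,(0,\phi_k)^\top\}$ to $\{\xi_k^+,\xi_k^-\}$ converges to an invertible limit matrix as $k\to\infty$, the uniform boundedness of $M_k$ and $M_k^{-1}$ over all large $k$ gives, block-diagonally, a bounded invertible operator on the closed span; hence $\{\xi_k^+,\xi_k^-\}$ is a Riesz basis of $\Hf$. The identical argument applied to the matrices $M_k^*$ built from \eqref{eq:eigfun-A*}, using that $(\lambda+\overline{\mu_k^+})^2/((\lambda+\overline{\mu_k^+})^2-\eta\Lambda_k)$ and $-\eta\Lambda_k/(\lambda+\overline{\mu_k^+})$ have finite nonzero limits while $(\lambda+\overline{\mu_k^-})^2/((\lambda+\overline{\mu_k^-})^2-\kappa\Lambda_k) \to 1$ and $-\kappa\Lambda_k/(\lambda+\overline{\mu_k^-}) \to 0$, yields the Riesz basis property for the eigenfunctions of $\Af^*$.

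For the spectrum, the inclusion $\sigma(\Af) \supseteq \overline{\{\mu_k^+,\mu_k^-\}}$ is immediate since every $\mu_k^\pm$ is an eigenvalue and $\sigma(\Af)$ is closed; the accumulation point $-\nu_0$ of $\{\mu_k^+\}$ therefore also lies in $\sigma(\Af)$. For the reverse inclusion I would show that any $\mu \in \mathbb{C}$ outside $\{\mu_k^+,\mu_k^-\}_{k}\cup\{-\nu_0\}$ lies in the resolvent set: expanding in the Riesz basis $\{\xi_k^\pm\}$, the resolvent $(\mu I-\Af)^{-1}$ acts diagonally with factors $(\mu-\mu_k^\pm)^{-1}$, and the condition $\mu\neq -\nu_0$ together with $\mu_k^+ \to -\nu_0$, $\mu_k^-\to -\infty$ guarantees $\inf_k|\mu-\mu_k^\pm| > 0$, so this diagonal operator is bounded; biorthogonality of $\{\xi_k^\pm\}$ and $\{\xi_k^{*\pm}\}$ makes it a genuine two-sided inverse. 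Hence $\sigma(\Af) = \{\mu_k^+,\mu_k^-\mid k\in\mathbb{N}\}\cup\{-\nu_0\}$.

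The main obstacle is the uniform control of the change-of-basis matrices $M_k, M_k^{-1}$ (and their adjoint analogues) for large $k$: one must check that the limiting matrices are invertible — equivalently that the limits of the second components of $\xi_k^+$ and $\xi_k^-$ differ — and that the determinants $\det M_k$ stay bounded away from $0$ uniformly. Here the asymptotics $\mu_k^+ + \lambda \to \kappa/\eta \neq 0$ and $(\mu_k^- + \lambda)^{-1} \to 0$ are exactly what is needed, so the verification is routine but must be done; the same care is needed to confirm that the finitely many exceptional indices (complex eigenvalues, or a possible coincidence $\mu_k^+ = \mu_k^-$ forcing a generalized eigenvector) do not spoil the Riesz basis property — they do not, since a finite-dimensional adjustment of a Riesz basis is again a Riesz basis. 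All of this is genuinely \emph{analogous} to \cite[Section 3.2--3.3]{WKR} because $A_2$ is bounded and diagonal in the $\phi_k$-decomposition, so I would largely cite those arguments while indicating the (harmless) modifications caused by the $\beta\gamma$ term.
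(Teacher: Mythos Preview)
Your proposal is correct and follows essentially the same approach as the paper: the paper does not give an independent proof but simply states that ``the analysis done in \cite[Section~3.2--3.3]{WKR} are valid here,'' relying on the fact that $\Af$ is a bounded diagonal perturbation of the operator treated there. Your outline (uniform control of the $2\times 2$ change-of-basis matrices $M_k$ via the asymptotics of $\mu_k^\pm$, finite-dimensional handling of exceptional indices, and the diagonal resolvent computation for the spectrum) is precisely the content of those cited sections, spelled out in more detail than the paper itself provides.
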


\section{Strong solution} \label{sec:wellpsed}  
In this section, we provide an alternate proof of the existence and uniqueness of strong solutions of the problem \eqref{eq:GBHE}. To do that, let us first consider a heat equation and provide a result on the regularity of the same. 
\begin{Proposition}\label{pps:heatStrongL2L6}
Let $\delta$ be as given in \eqref{eqdef:delta}, $w_0\in H^1_0(\Omega)$ and $f \in L^2(0,\infty; L^2(\Omega)).$ Then there exists a unique strong solution $$w \in L^2(0,\infty; H^2(\Omega)) \cap L^\infty(0,\infty; H^1_0(\Omega)) \cap L^{2(\delta+1)}(0,\infty; L^{6(\delta+1)}(\Omega)) \text{ with } w_t \in L^2(0,\infty; L^2(\Omega))$$ of the problem 
\begin{equation*}
\left\lbrace
\begin{aligned}
    & w_t(x,t) -\eta \Delta w(x,t)=f(x,t) \ \text{ in }\  \Omega \times (0,\infty), \\
    & w(x,t)=0 \ \text{ on }\ \Gamma, \\
    & w(x,0)=w_0(x) \ \text{ in }\ \Omega,
\end{aligned}\right.
\end{equation*}
such that for all $\psi \in L^2(\Omega)$, the following equality is satisfied: 
\begin{align}
	(w_t(t),\psi(t))-\eta(\Delta w(t),\psi(t))=(f(t),\psi(t)), \ \text{ for a.e. } \ t\geq 0. 
\end{align}
Furthermore, the following estimate holds
\begin{align*}
	\|w\|_{L^2(0,\infty; H^2(\Omega))} + \|w\|_{L^\infty(0,\infty; H^1_0(\Omega))} & + \|w\|_{H^1(0,\infty; L^2(\Omega))}+\|w\|_{L^{2(\delta+1)}(0,\infty; L^{6(\delta+1)}(\Omega))} \\
	& \le C\left( \|w_0\|_{H^1_0(\Omega)}  +\|f\|_{L^2(0,\infty;L^2(\Omega))}  \right),
\end{align*}
for some positive constant $C.$
\end{Proposition}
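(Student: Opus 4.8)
The plan is to prove this in two stages: first establish the standard strong-solution theory for the heat equation (existence, uniqueness, and the estimates in $L^2(0,\infty;H^2)\cap L^\infty(0,\infty;H^1_0)\cap H^1(0,\infty;L^2)$), and then, as the genuinely new ingredient, upgrade the solution to $L^{2(\delta+1)}(0,\infty;L^{6(\delta+1)}(\Omega))$ by interpolation. For the first stage I would use the spectral decomposition of the Dirichlet Laplacian: write $w_0=\sum_k (w_0,\phi_k)\phi_k$, $f(t)=\sum_k f_k(t)\phi_k$, and $w(t)=\sum_k w_k(t)\phi_k$ with $w_k(t)=e^{-\eta\Lambda_k t}(w_0,\phi_k)+\int_0^t e^{-\eta\Lambda_k(t-s)}f_k(s)\,ds$. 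Multiplying the equation by $w_t$ and integrating over $\Omega$ gives $\|w_t(t)\|^2+\frac{\eta}{2}\frac{d}{dt}\|\nabla w(t)\|^2=(f(t),w_t(t))\le\frac12\|f(t)\|^2+\frac12\|w_t(t)\|^2$; integrating in time yields $\|\nabla w\|_{L^\infty(0,\infty;L^2)}$ and $\|w_t\|_{L^2(0,\infty;L^2)}$ bounds in terms of $\|w_0\|_{H^1_0}$ and $\|f\|_{L^2(0,\infty;L^2)}$. Then $\eta\Delta w=w_t-f\in L^2(0,\infty;L^2)$, and elliptic regularity for the Dirichlet problem on the $C^2$ domain $\Omega$ gives $\|w\|_{L^2(0,\infty;H^2)}\le C\|\Delta w\|_{L^2(0,\infty;L^2)}$, closing the first set of estimates. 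Uniqueness is immediate by linearity and the energy identity.

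For the second stage, the aim is to control $\|w\|_{L^{2(\delta+1)}(0,\infty;L^{6(\delta+1)}(\Omega))}$. The idea is that $w\in L^\infty(0,\infty;H^1_0(\Omega))\cap L^2(0,\infty;H^2(\Omega))$, so by Sobolev embedding $w\in L^\infty(0,\infty;L^{q_1})$ with $q_1=\frac{2d}{d-2}$ (or any finite $q_1$ when $d\le 2$) and $w\in L^2(0,\infty;L^{q_2})$ with $q_2$ as large as we like since $H^2(\Omega)\hookrightarrow L^\infty(\Omega)$ for $d\le 3$ (indeed $H^2\hookrightarrow L^\infty$ when $d<4$). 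Interpolating between the space variable first: for fixed $t$, $\|w(t)\|_{L^{6(\delta+1)}}\le\|w(t)\|_{L^{q_1}}^{\theta}\|w(t)\|_{L^\infty}^{1-\theta}$ for a suitable $\theta=\theta(\delta,d)\in(0,1)$, and then taking $L^{2(\delta+1)}$ in time and applying Hölder's inequality in time with the exponents chosen so that the $L^\infty$-in-time bound on the first factor and the $L^2$-in-time bound on $\|w(t)\|_{H^2}$ on the second factor combine correctly — this forces a relation among $\delta$, $d$, and the time integrability exponent $2(\delta+1)$, which is exactly where the restriction on $\delta$ in \eqref{eqdef:delta} (arbitrary for $d=1,2$, only $\delta\in\{1,2\}$ for $d=3$) enters. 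Concretely, one checks that $2(\delta+1)\cdot(1-\theta)\le 2$, i.e. the power of the $L^2(0,\infty;H^2)$ factor does not exceed $2$, which is automatic for $d\le 2$ (any $\delta$) and imposes $\delta\le 2$ when $d=3$. Alternatively, and perhaps more cleanly, I would invoke the Gagliardo--Nirenberg inequality (Theorem \ref{thm:G-NinBdd}) with $j=0$, $m=2$, $q=2$, $r=2$ to write $\|w(t)\|_{L^{6(\delta+1)}}\le C\|w(t)\|_{H^2}^{\theta}\|w(t)\|^{1-\theta}$ with $\theta$ determined by $\frac{1}{6(\delta+1)}=\theta\big(\frac12-\frac2d\big)+\frac{1-\theta}{2}$, then raise to the power $2(\delta+1)$, use $\|w\|_{L^\infty(0,\infty;L^2)}\le\|w\|_{L^\infty(0,\infty;H^1_0)}$ on the second factor, and integrate in time, verifying $2(\delta+1)\theta\le 2$ — again the constraint \eqref{eqdef:delta} is precisely what makes this hold.

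The main obstacle I anticipate is the bookkeeping in the second stage: choosing the interpolation exponent $\theta$ and the splitting of the time integral so that the product lands in $L^1$ in time, and verifying that the resulting arithmetic conditions on $\theta$, $d$, and $\delta$ are exactly satisfied under \eqref{eqdef:delta} — in particular checking the borderline cases ($d=3$, $\delta=2$) and handling the Sobolev embedding $H^2(\Omega)\hookrightarrow L^\infty(\Omega)$ (valid for $d\le 3$) versus $H^1(\Omega)\hookrightarrow L^{2d/(d-2)}(\Omega)$ correctly near $d=2$, where one must pass to an arbitrary finite exponent. The remaining pieces — the energy estimate, elliptic regularity, uniqueness, and the weak formulation $(w_t(t),\psi)-\eta(\Delta w(t),\psi)=(f(t),\psi)$ which is just a restatement of the strong equation tested against $\psi\in L^2(\Omega)$ — are routine. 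I would also note that all constants are independent of the time horizon because the energy identity gives a uniform-in-time bound (no exponential growth), which is what allows the estimates to be stated on $(0,\infty)$ rather than on finite intervals.
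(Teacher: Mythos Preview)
Your route is different from the paper's, and as written it has a genuine gap. The paper does \emph{not} interpolate; it tests the heat equation against $|w|^{2\delta}w$, obtaining
\[
\frac{1}{2(\delta+1)}\frac{d}{dt}\|w(t)\|_{L^{2(\delta+1)}}^{2(\delta+1)}+\frac{(2\delta+1)\eta}{2}\,\bigl\||w|^{\delta}\nabla w\bigr\|^{2}\le C\|w\|_{L^{2(\delta+1)}}^{2\delta}\|f\|^{2},
\]
closes this with the nonlinear Gronwall inequality (Theorem~\ref{lem:NonlinGronwall}), and then reads off the desired bound via $\|w\|_{L^{6(\delta+1)}}^{\delta+1}=\||w|^{\delta}w\|_{L^{6}}\le C(\delta+1)\||w|^{\delta}\nabla w\|$. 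The quantity $\int_0^\infty\||w|^{\delta}\nabla w\|^{2}\,dt$ reappears verbatim in the global-existence argument (Step~3 of Theorem~\ref{th:localStrongsolGBHE}), which is why the paper isolates it.

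Your interpolation scheme, as you wrote it, does not close for $d=3$. With Gagliardo--Nirenberg at $r=2$ one computes $\theta=\tfrac{3\delta+2}{4(\delta+1)}$, so the requirement $2(\delta+1)\theta\le2$ becomes $\delta\le\tfrac23$ --- it fails already at $\delta=1$, not at $\delta=2$ as you claim. Your first version (interpolating $L^{6}$ against $L^{\infty}$ with the crude $\|w\|_{L^\infty}\le C\|w\|_{H^2}$) yields $2(\delta+1)(1-\theta)=2\delta$, which only gives $\delta\le1$. The fix is to interpolate against $H^{1}$ instead of $L^{2}$ (take $r=6$ in Gagliardo--Nirenberg for $d=3$, or use Agmon's inequality for the $L^{\infty}$ factor): then $\|w(t)\|_{L^{6(\delta+1)}}^{2(\delta+1)}\le C\|w(t)\|_{H^1}^{\delta+2}\|w(t)\|_{H^2}^{\delta}$. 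Even so, on the \emph{infinite} time interval the power $\delta$ on $\|w\|_{H^2}$ is a problem when $\delta<2$, since $L^{2}(0,\infty)\not\subset L^{\delta}(0,\infty)$; you must also use $w\in L^{2}(0,\infty;H^{1}_{0})$ (the basic energy estimate, testing against $w$, which you did not list) and split $\|w\|_{H^1}^{\delta+2}$ so that part of it pairs with $\|w\|_{H^2}^{\delta}$ via Cauchy--Schwarz in time. The same issue arises for $d\le2$: there $\|w\|_{L^{6(\delta+1)}}\le C\|w\|_{H^1}$, and $\int_0^\infty\|w\|_{H^1}^{2(\delta+1)}\,dt$ is not bounded by $\|w\|_{L^\infty_t H^1}$ alone. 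So your ``$L^\infty$-in-time on the first factor, $L^2$-in-time on the second'' splitting is too coarse; the missing ingredient is the $L^2(0,\infty;H^1_0)$ bound.
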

An idea of the proof is provided in Appendix \ref{app:heat}.

Next, we aim to demonstrate the existence and uniqueness of strong solutions of GBHE with memory:
\begin{Theorem} \label{th:localStrongsolGBHE}
Let $\delta$ be as given in \eqref{eqdef:delta} and $T>0$ be any fixed number. For any $y_0\in H^1_0(\Omega)$ and $f\in L^2(0, T; L^2(\Omega)),$ the following generalized Burgers-Huxley equation with memory
\begin{equation} \label{eq:GBHE-f}
\left\{
    \begin{aligned}
& y_t - \eta \Delta y +  \alpha y\sum_{i=1}^d \frac{\partial y}{\partial x_i} - \kappa \int_0^t e^{-\lambda (t-s)}\Delta y(s)ds + \beta y(y^\delta-1)(y^\delta-\gamma) = f \ \text{ in }\  \Omega\times (0,T), \\
& y=0  \ \text{ on }\  \Gamma\times (0,T), \\
        & y(0)=y_0 \ \text{ in }\ \Omega.
    \end{aligned}\right.
\end{equation}
admits a unique strong solution with   $$y\in L^2(0,T; H^2(\Omega))\cap L^\infty(0,T; H^1_0(\Omega)) \cap L^{2(\delta+1)}(0,T; L^{6(\delta+1)}(\Omega))$$ with $y_t \in L^2(0,T; L^2(\Omega) )$ satisfying 
\begin{align}\label{en-est}
	\|y\|_{L^\infty(0,T; H^1_0(\Omega))}  + \|y\|_{L^2(0,T; H^2(\Omega))} &+ \|y\|_{H^1(0,T; L^2(\Omega))}  + \|y\|_{L^{2(\delta+1)}(0,T; L^{6(\delta+1)}(\Omega))} \\
	&\le C \left( \|y_0\|_{H^1_0(\Omega)}  +\|f\|_{L^2(0,T; L^2(\Omega))} \right), 
\end{align}
for some $C>0.$
\end{Theorem}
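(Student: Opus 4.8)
The plan is to reformulate \eqref{eq:GBHE-f} as the coupled system \eqref{eq:GBHE-u-coup} via $z=\int_0^t e^{-\lambda(t-s)}y(s)\,ds$, rewrite it as the operator equation $\Wf'(t)=\Af\Wf(t)+F(\Wf(t))$ in $\Hf$ with $F$ collecting the zero-order nonlinear terms and the advection term (written as $f-\alpha y^\delta\sum_i\partial_i y-\beta y(y^\delta-1)(y^\delta-\gamma)+\beta\gamma y$, so that the linear part $-\beta\gamma y$ of the reaction is absorbed into $\Af$), and then run a Banach fixed point argument on a suitable ball in the space $\mathcal{Z}_T:=\{y\in L^2(0,T;H^2)\cap L^\infty(0,T;H^1_0)\cap L^{2(\delta+1)}(0,T;L^{6(\delta+1)}):y_t\in L^2(0,T;L^2)\}$. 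The map $\Phi$ sends $\bar y\mapsto y$ where $y$ solves the linear coupled system \eqref{eq:GBHE-Lin-coup} with right-hand side $f-\alpha\bar y^\delta\sum_i\partial_i\bar y-\beta\bar y(\bar y^\delta-1)(\bar y^\delta-\gamma)+\beta\gamma\bar y$; by the Step~1 regularity result for \eqref{eq:GBHE-Lin-coup} (semigroup method plus heat regularity, together with Proposition~\ref{pps:heatStrongL2L6} to get the $L^{2(\delta+1)}(L^{6(\delta+1)})$ bound) this is well defined provided the right-hand side lies in $L^2(0,T;L^2(\Omega))$.

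First I would establish that the nonlinear terms map $\mathcal{Z}_T$ into $L^2(0,T;L^2(\Omega))$ with the correct small-time or small-ball estimates. For the reaction term one expands $\bar y(\bar y^\delta-1)(\bar y^\delta-\gamma)$ into monomials $\bar y^{2\delta+1},\bar y^{\delta+1},\bar y$; the top term is controlled by $\|\bar y^{2\delta+1}\|_{L^2(0,T;L^2)}=\|\bar y\|_{L^{2(2\delta+1)}(0,T;L^{2(2\delta+1)})}^{2\delta+1}$, which is dominated using interpolation between $L^\infty(H^1_0)\hookrightarrow L^\infty(L^6)$ (for $d\le 3$) and $L^{2(\delta+1)}(L^{6(\delta+1)})$ — this is exactly why the norm $\|\cdot\|_{L^{2(\delta+1)}(L^{6(\delta+1)})}$ is carried through. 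For the advection term $\bar y^\delta\partial_i\bar y$ one writes $\|\bar y^\delta\partial_i\bar y\|_{L^2(0,T;L^2)}\le\|\bar y\|_{L^{2p}(L^{2p})}^\delta\|\nabla\bar y\|_{L^{2q}(L^{2q})}$ for conjugate $p,q$, and controls $\nabla\bar y$ by interpolating $L^\infty(H^1_0)$ with $L^2(H^2)$; the Gagliardo--Nirenberg and Agmon inequalities (Theorem~\ref{thm:G-NinBdd}, Lemma~\ref{lemVB:AgmonIE}) plus Hölder do the bookkeeping, and the restriction on $\delta$ in \eqref{eqdef:delta} is precisely what makes these exponents admissible in dimension $d$. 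Lipschitz estimates for differences $N(\bar y_1)-N(\bar y_2)$ follow by the same computations after factoring out $\bar y_1-\bar y_2$.

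Then I would run the contraction: choose $R$ depending on $\|y_0\|_{H^1_0}+\|f\|_{L^2(0,T;L^2)}$ and $T^*\le T$ small so that $\Phi$ maps the ball $B_R\subset\mathcal{Z}_{T^*}$ into itself and is a strict contraction there (the nonlinear estimates carry a positive power of $T^*$, or alternatively a factor $R$, so shrinking $T^*$ suffices); this yields a unique local strong solution on $[0,T^*]$, and uniqueness on the whole interval follows by a standard continuation/connectedness argument. To globalize, i.e.\ to reach the fixed $T$ (and in fact $T=\infty$ as in the abstract), I would derive the energy estimate \eqref{en-est}: test the $w$-equation in \eqref{eq:GBHE-u-coup} with $-\Delta y$ (and $y$), handle the memory term either by the $(y,z)$ coupling (multiply the $z$-equation appropriately and add, exploiting $-\kappa(\Delta z,\cdot)$ together with $z_t+\lambda z=y$ to get a Lyapunov-type functional) and the Huxley reaction term via the pointwise sign/coercivity structure of $\beta y(y^\delta-1)(y^\delta-\gamma)$, absorbing bad terms with Young's inequality; the advection term $\alpha y^\delta\sum_i\partial_i y$ tested against $-\Delta y$ is the delicate one and is bounded using Gagliardo--Nirenberg exactly as in \cite{MTM_Camwa,SMAKSisc}. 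The resulting a priori bound, combined with the local existence and a continuation argument (if the solution existed only on a maximal $[0,T_{\max})$ with $T_{\max}<T$, the energy bound would keep the $H^1_0$ norm finite and allow restarting), gives the global strong solution satisfying \eqref{en-est}.

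\textbf{Main obstacle.} The main difficulty is the treatment of the advection term $\alpha y^\delta\sum_i\partial_i y$ in both the fixed-point estimates and the a~priori energy estimate: getting it into $L^2(0,T;L^2)$ (for the contraction) and controlling $(\alpha y^\delta\sum_i\partial_i y,\Delta y)$ (for the global bound) requires carefully chosen Gagliardo--Nirenberg/Agmon interpolation exponents whose admissibility is exactly the content of the dimension-dependent restriction \eqref{eqdef:delta} on $\delta$; keeping track of these exponents, and ensuring the nonlinear maps land in the right spaces with a gain in $T^*$ or $R$, is where the real work lies. The memory term, by contrast, is essentially linear once the coupled formulation is adopted and is handled cleanly through the $\Af$-semigroup and the Lyapunov structure of the $(y,z)$ pair.
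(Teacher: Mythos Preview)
Your overall architecture---pass to the coupled $(y,z)$ system, solve the linear problem via the $\Af$-semigroup plus heat regularity, run a Banach fixed point, then globalize by an a~priori energy bound---is exactly the paper's. But there are two points where your sketch diverges from what the paper actually does, and the second is a genuine gap.

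\emph{Local step.} The paper does \emph{not} obtain contraction by shrinking $T^*$; it works on $(0,\infty)$ and gets contraction by taking $\|y_0\|_{H^1_0}$ and $\|f\|_{L^2L^2}$ small (Step~2 of the proof). The nonlinear estimates actually proved (Propositions~\ref{pps:GBHE-selfmap-I}--\ref{pps:GBHE-contrac-I}) carry no explicit power of $T$: e.g.\ $\|h(\psi)\|_{L^2(0,T;L^2)}\le C\|\psi\|_{L^\infty(L^{3\delta})}^{\delta}\|\psi\|_{L^2(H^2)}$, and neither factor decays as $T\to 0$. A time gain \emph{can} be extracted by sharper interpolation (trading $L^\infty_t$ for $L^p_t$ and using H\"older in time), but that is extra work you would have to carry out; it is not automatic from the estimates the paper provides.

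\emph{Globalization.} Testing only with $-\Delta y$ and $y$, as you propose, does not close for the full range \eqref{eqdef:delta}. Take $d=3$, $\delta=2$: after Young,
\[
|(\alpha y^{2}\nabla y\cdot\boldsymbol{1},\Delta y)|\le \tfrac{\eta}{4}\|\Delta y\|^{2}+C\|y^{2}\nabla y\|^{2},
\]
and every Gagliardo--Nirenberg/Agmon estimate of $\|y^{2}\nabla y\|^{2}$ produces a full factor $\|y\|_{H^{2}}^{2}$ (for instance $\|y\|_{L^6}^{4}\|\nabla y\|_{L^6}^{2}\le C\|\nabla y\|^{4}\|y\|_{H^2}^{2}$), which can neither be absorbed nor Gronwalled. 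The paper's remedy (Step~3) is to \emph{first} test with $|y|^{2\delta}y$: this yields the crucial bound
\[
\int_{0}^{T}\big\|\,|y(s)|^{\delta}\nabla y(s)\,\big\|^{2}\,ds\le C\big(\|y_0\|_{H^1_0},\|f\|_{L^2L^2},T\big),
\]
using that the advection term vanishes and the memory term has a good sign against this multiplier, while the reaction contributes $\beta\|y\|_{L^{4\delta+2}}^{4\delta+2}$ on the left. Only after this does the $-\Delta y$ test close, because the dangerous right-hand side is precisely $C\|y^{\delta}\nabla y\|^{2}$, now known to be integrable in time. Without this intermediate $|y|^{2\delta}y$ step your a~priori estimate fails for $\delta=2$ in $d=3$ (and for large $\delta$ in $d=2$).
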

In the above theorem, we aim to obtain a global existence result but in the case of infinite time horizon we get a local existence result in the sense that we need to take a smallness assumption on the intial data and the forcing term. In the first two steps of  the prove of the Theorem\ref{th:localStrongsolGBHE} (see. page 11) we consider infinite time horizone to obtain local existence result, however, in the step 3 of the proof we consider finite time to obtain a global existence result.
To prove the result, we first establish the well-posedness of the corresponding linear system and then apply the Banach fixed-point theorem. This will ensure us a local existence result and that further can be extended using an energy estimate. To apply the Banach fixed-point theorem, we need the following two crucial Propositions. 

Let us define the non-linear terms as 
\begin{align} \label{eq:NL-h-GBHE}
    h(\psi):= \alpha \psi^\delta \nabla \psi\cdot \boldsymbol{1},
\end{align}
and 
\begin{align} \label{eq:NL-g-GBHE}
    g(\psi):=\beta \psi( \psi^\delta-1)( \psi^\delta-\gamma)-\beta \gamma \psi = \beta \psi^{2\delta+1}-\beta(1+\gamma)\psi^{\delta+1}.
\end{align}
We also define the space 
\begin{align} \label{eq:D}
   D:= \left\lbrace \psi\in L^2(0,\infty; H^2(\Omega)) \cap L^\infty(0,\infty; H^1_0(\Omega)) \cap L^{2(\delta+1)}(0,\infty; L^{6(\delta+1)}(\Omega))\cap H^1(0,\infty; L^2(\Omega))  \right\rbrace
\end{align}
equipped with the norm
\begin{align}
    \|\psi\|_D^2:= \|\psi\|_{L^2(0,\infty; H^2(\Omega))}^2 + \|\psi\|_{L^\infty(0,\infty; H^1_0(\Omega))}^{2} + \|\psi\|_{L^{2(\delta+1)}(0,\infty; L^{6(\delta+1)}(\Omega))}^{2} + \|\psi\|_{H^1(0,\infty; L^2(\Omega)) }^2.
\end{align}

\begin{Proposition} \label{pps:GBHE-selfmap-I}
For any $\psi \in D,$ the functions $h$ and $g$ defined in \eqref{eq:NL-h-GBHE} and \eqref{eq:NL-g-GBHE}, respectively, satisfy the following:
\begin{itemize}
\item[(a)] $\|h(\psi)\|_{L^2(0,\infty; L^2(\Omega))} \le M_1  \|\psi\|_D^{\delta+1},$
\item[(b)] $\|g(\psi)\|_{L^2(0,\infty; L^2(\Omega))} \le M_1 \left(\|\psi\|_D^{\delta+1} + \|\psi\|_D^{2\delta+1}\right) ,$
\end{itemize}
for some $M_1>0.$
\end{Proposition}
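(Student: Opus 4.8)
The plan is to bound the two nonlinear maps in the $L^2(0,\infty;L^2(\Omega))$ norm by interpolating the various pieces of the $D$-norm, using the generalized H\"older inequality (Proposition \ref{ppsPR-GenHoldIneq}) to split the products and the Sobolev/Gagliardo--Nirenberg machinery (Lemma \ref{lemPR:SobEmb}, Theorem \ref{thm:G-NinBdd}) to control the high Lebesgue exponents. The key observation is that a function $\psi\in D$ simultaneously lies in $L^\infty(0,\infty;H^1_0(\Omega))$ (hence in $L^\infty(0,\infty;L^6(\Omega))$ by Sobolev embedding in $d\le 3$), in $L^2(0,\infty;H^2(\Omega))$, and in $L^{2(\delta+1)}(0,\infty;L^{6(\delta+1)}(\Omega))$; the whole game is to distribute the $2\delta+1$ or $\delta+1$ factors of $\psi$ (and the one gradient factor for $h$) across these three reservoirs so that the time exponents add up to $1/2$ and the space exponents add up to $1/2$.

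For part (a): write $h(\psi)=\alpha\,\psi^\delta\,(\nabla\psi\cdot\boldsymbol 1)$ and estimate pointwise in time
\[
\|h(\psi)(t)\|_{L^2(\Omega)} \le \alpha\sqrt d\,\|\psi(t)\|_{L^{6\delta}(\Omega)}^{\delta}\,\|\nabla\psi(t)\|_{L^{3}(\Omega)}
\]
by H\"older with exponents $\tfrac{1}{3}+\tfrac16 = \tfrac12$ (interpreting $L^{6\delta}$ via $\delta$ factors of $L^{6\delta}$, so $\delta\cdot\tfrac{1}{6\delta}+\tfrac13=\tfrac12$). Then bound $\|\nabla\psi(t)\|_{L^3(\Omega)}\lesssim \|\psi(t)\|_{H^2(\Omega)}$ by Sobolev embedding ($H^1(\Omega)\hookrightarrow L^3(\Omega)$ in $d\le 3$ applied to $\nabla\psi$), and bound $\|\psi(t)\|_{L^{6\delta}(\Omega)}^{\delta}$ by interpolating between $L^6$ (controlled by $\|\psi(t)\|_{H^1_0(\Omega)}$, hence by $\|\psi\|_{L^\infty(0,\infty;H^1_0)}$, uniformly in $t$) and, if $\delta\ge 2$, a piece of $\|\psi(t)\|_{H^2(\Omega)}$ via Gagliardo--Nirenberg; the exponents are arranged so the $H^2$-in-time integrability works out to give, after H\"older in time, a factor $\|\psi\|_{L^2(0,\infty;H^2)}^{\theta}$ with the remaining powers absorbed into $\|\psi\|_{L^\infty(0,\infty;H^1_0)}^{\delta+1-\theta}$ and possibly $\|\psi\|_{L^{2(\delta+1)}(0,\infty;L^{6(\delta+1)})}$. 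In all cases the total homogeneity is $\delta+1$, so the bound collapses to $M_1\|\psi\|_D^{\delta+1}$. (The simplest clean route for $d=3$, where only $\delta\in\{1,2\}$ is allowed: for $\delta=1$ use $\|\psi\nabla\psi\|_{L^2}\le\|\psi\|_{L^6}\|\nabla\psi\|_{L^3}\lesssim \|\psi\|_{H^1_0}\|\psi\|_{H^2}$, then H\"older in time; for $\delta=2$, $\|\psi^2\nabla\psi\|_{L^2}\le \|\psi\|_{L^{12}}^2\|\nabla\psi\|_{L^{3}}$ and interpolate $L^{12}$ between $L^6$ and $H^2\hookrightarrow L^\infty$-ish via Gagliardo--Nirenberg; for $d=1,2$ the embeddings are more generous.)

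For part (b): since $g(\psi)=\beta\psi^{2\delta+1}-\beta(1+\gamma)\psi^{\delta+1}$ is a sum of two pure monomials, it suffices to bound $\|\psi^{m}\|_{L^2(0,\infty;L^2(\Omega))}=\|\psi\|_{L^{2m}(0,\infty;L^{2m}(\Omega))}^{m}$ for $m=\delta+1$ and $m=2\delta+1$, and show each is $\lesssim \|\psi\|_D^{m}$. For $m=\delta+1$ this is immediate: $\|\psi\|_{L^{2(\delta+1)}(0,\infty;L^{2(\delta+1)}(\Omega))}\le |\Omega|^{c}\,\|\psi\|_{L^{2(\delta+1)}(0,\infty;L^{6(\delta+1)}(\Omega))}\le C\|\psi\|_D$ by H\"older in space on the bounded domain, giving the $\|\psi\|_D^{\delta+1}$ term. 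For $m=2\delta+1$ one again splits $2\delta+1$ factors of $\psi$: put $\delta+1$ of them into $L^{2(\delta+1)}(0,\infty;L^{6(\delta+1)}(\Omega))$ and the remaining $\delta$ into $L^\infty(0,\infty;L^6(\Omega))\hookleftarrow L^\infty(0,\infty;H^1_0(\Omega))$ — checking that $\tfrac{\delta+1}{2(\delta+1)}+0 = \tfrac12$ in time and $\tfrac{\delta+1}{6(\delta+1)}+\tfrac{\delta}{6}\le\tfrac12$ in space (with equality up to a harmless $|\Omega|$-factor when strict) — to obtain $\|\psi^{2\delta+1}\|_{L^2L^2}\lesssim \|\psi\|_{L^{2(\delta+1)}L^{6(\delta+1)}}^{\delta+1}\,\|\psi\|_{L^\infty H^1_0}^{\delta}\le C\|\psi\|_D^{2\delta+1}$, yielding the second term. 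Adding the two monomial bounds gives (b) with a common constant $M_1$.

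The main obstacle is purely bookkeeping: verifying that for \emph{every} admissible $(d,\delta)$ pair in \eqref{eqdef:delta} the H\"older exponents in space can actually be made to sum to (at most) $1/2$ and the time exponents to exactly $1/2$, and that when a Gagliardo--Nirenberg interpolation is needed (the $L^{6\delta}$ factor in $h$ when $\delta\ge 2$, or high powers in $d=1,2$), the resulting exponent $\theta$ on the $H^2$-norm stays in $[0,1]$ and the complementary time-integrability still closes against $L^2_tH^2_x$. There is no analytic difficulty beyond that — no loss of derivatives, no borderline critical case — because $D$ was defined precisely to contain the $L^{2(\delta+1)}L^{6(\delta+1)}$ space that makes these estimates scale-correct; the argument is a finite case check with H\"older and Sobolev/Gagliardo--Nirenberg at each step.
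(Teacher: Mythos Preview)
Your overall strategy---H\"older in space, Sobolev/Gagliardo--Nirenberg to absorb the exponents, then H\"older in time---matches the paper's. The differences are in the specific H\"older splits, and in one place your stated split does not close.

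For (a) the paper reverses your allocation: it puts the gradient in $L^6$ and $\psi^\delta$ in $L^3$, i.e.\ $\|\psi^\delta\nabla\psi\|_{L^2}\le \|\psi^\delta\|_{L^3}\|\nabla\psi\|_{L^6}\le C_s\|\psi\|_{L^{3\delta}}^\delta\|\psi\|_{H^2}$, and then simply uses $H^1_0\hookrightarrow L^{3\delta}$ (valid for all admissible $(d,\delta)$) to pull the $L^{3\delta}$ factor out of the time integral in $L^\infty$. No Gagliardo--Nirenberg is needed. Your split $\|\psi\|_{L^{6\delta}}^\delta\|\nabla\psi\|_{L^3}$ forces an interpolation when $6\delta>6$, and your explicit $d=3,\delta=2$ recipe (interpolate $L^{12}$ between $L^6$ and $L^\infty\hookleftarrow H^2$, bound $\|\nabla\psi\|_{L^3}\lesssim\|\psi\|_{H^2}$) gives the pointwise bound $\|\psi\|_{H^1_0}\|\psi\|_{H^2}^2$, whose square is not integrable in time against $L^2_tH^2_x$ alone. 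To close you would also have to interpolate $\|\nabla\psi\|_{L^3}\lesssim\|\psi\|_{H^1_0}^{1/2}\|\psi\|_{H^2}^{1/2}$, which then yields $\|\psi\|_{H^1_0}^2\|\psi\|_{H^2}$ and the argument goes through. So the route works once patched, but the paper's split is cleaner.

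For (b) the two arguments are essentially the same; the paper also splits $\psi^{2\delta+1}=\psi^{\delta+1}\cdot\psi^\delta$ with the first factor in $L^{2(\delta+1)}_tL^{6(\delta+1)}_x$ and the second in $L^\infty_tL^{3\delta}_x\hookleftarrow L^\infty_tH^1_0$. Your choice of $L^6$ for the $\delta$ factors is equivalent when $\delta\le 2$ but needs the $L^{3\delta}$ upgrade (which you allude to) for $d\le 2$ with $\delta\ge 3$.
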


\begin{proof}
(a) Note that in the case of two-dimensions, we have $H^1_0(\Omega) \hookrightarrow L^{3\delta}(\Omega)$ for any $\frac{2}{3}\le \delta<\infty,$ by using Sobolev embedding.  However, in the case of three dimension it is true for $1\le \delta\le 2.$ We start with applying generalized H\"{o}lder's inequality and Sobolev embedding to obtain 
\begin{align*}
 \|h(\psi)\|_{L^2(0,\infty; L^2(\Omega))}^2 & = \int_0^\infty \|  \alpha  \psi^\delta(t) \nabla \psi(t)\cdot \boldsymbol{1} \|^2 dt  \\
 & \le \alpha^2 \int_0^\infty \|\psi^\delta(t)\|_{L^3(\Omega)}^2 \|\nabla \psi(t)\|_{L^6(\Omega)}^2 dt \\
 & \le C_s \alpha^2 \int_0^\infty \left( \int_\Omega |\psi(x,t)|^{3\delta} dx  \right)^{\frac{2}{3\delta}\delta} \|\psi(t)\|_{H^2(\Omega)}^2 dt \\
 & \le C_s\alpha^2 \|\psi\|_{L^\infty(0,\infty; L^{3\delta}(\Omega))}^{2\delta} \|\psi\|_{L^2(0,\infty; H^2(\Omega))}^2.
\end{align*}

\medskip 
\noindent (b) Note that $g(\psi)= \beta \psi^{2\delta+1}-\beta (1+\gamma)\psi^{\delta+1}.$ Let us first try to estimate the first term, that is,  $\psi^{2\delta+1},$ by using H\"{o}lder's inequality:
\begin{align*}
 \|\psi^{2\delta+1}\|_{L^2(0,\infty; L^2(\Omega))}^2 & = \int_0^\infty  \|\psi^{\delta+1}(t) \psi^\delta(t)\|^2 dt  \\
 & \le \int_0^\infty \|\psi^{\delta+1}(t)\|_{L^6(\Omega)}^2 \|\psi^\delta(t)\|_{L^3(\Omega)}^2 dt \\
 & \le \|\psi\|_{L^\infty(0,\infty; L^{3\delta}(\Omega))}^{2\delta}  \|\psi\|_{L^{2(\delta+1)}(0,\infty; L^{6(\delta+1)}.(\Omega))}^{2(\delta+1)}.
\end{align*}
We estimate the second term in a similar way:
\begin{align*}
	\|\psi^{\delta+1}\|_{L^2(0,\infty; L^2(\Omega))}^2 & = \int_0^\infty  \|\psi(t) \psi^\delta(t)\|^2 dt  \\
	& \le \int_0^\infty \|\psi(t)\|_{L^6(\Omega)}^2 \|\psi^\delta(t)\|_{L^3(\Omega)}^2 dt \\
	& \le \|\psi\|_{L^\infty(0,\infty; L^{3\delta}(\Omega))}^{2\delta}  \|\psi\|_{L^{2}(0,\infty; L^{6}(\Omega))}^{2}.
\end{align*}
Combing the above estimates, we conclude the proof.
\end{proof}

\begin{Proposition}\label{pps:GBHE-contrac-I}
For any $\psi_1, \psi_2 \in D,$ the functions $h$ and $g$ defined in \eqref{eq:NL-h-GBHE} and \eqref{eq:NL-g-GBHE}, respectively, satisfy the following estimates:
\begin{itemize}
\item[(a)] $\|h(\psi_1) - h(\psi_2)\|_{L^2(0,\infty; L^2(\Omega))} \le M_2 \Big(\|\psi_1\|_D^{\delta}+\|\psi_2\|_D^{\delta}+\|\psi_2\|_D^{\delta-1}\|\psi_1\|_D \Big) \|\psi_1 - \psi_2\|_D,$
\item[(b)] $\|g(\psi_1) - g(\psi_2)\|_{L^2(0,\infty; L^2(\Omega))} \le M_2 \left(\|\psi_1\|_D^{2\delta}+\|\psi_2\|_D^{2\delta}+\|\psi_1\|_D^\delta+\|\psi_2\|_D^\delta\right)\|\psi_1 - \psi_2\|_D,$
\end{itemize}
for some $M_2>0.$
\end{Proposition}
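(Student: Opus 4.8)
The plan is to estimate the difference $h(\psi_1)-h(\psi_2)$ and $g(\psi_1)-g(\psi_2)$ in $L^2(0,\infty;L^2(\Omega))$ by exploiting the fact that $h$ and $g$ are polynomial nonlinearities, so that their differences factor through the algebraic identity $a^n-b^n=(a-b)\sum_{j=0}^{n-1}a^jb^{n-1-j}$. The key observation is that the same function spaces appearing in $\|\cdot\|_D$ (namely $L^\infty(0,\infty;H^1_0(\Omega))$, hence $L^\infty(0,\infty;L^{3\delta}(\Omega))$ by Sobolev embedding in dimensions $d\le 3$ for the admissible $\delta$, together with $L^2(0,\infty;H^2(\Omega))$ and $L^{2(\delta+1)}(0,\infty;L^{6(\delta+1)}(\Omega))$) are exactly the ones used in Proposition \ref{pps:GBHE-selfmap-I}, and the proof will reuse that machinery almost verbatim.

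For part (a), I would write
\[
h(\psi_1)-h(\psi_2) = \alpha\big(\psi_1^\delta-\psi_2^\delta\big)\nabla\psi_1\cdot\boldsymbol{1} + \alpha\,\psi_2^\delta\,\nabla(\psi_1-\psi_2)\cdot\boldsymbol{1},
\]
then factor $\psi_1^\delta-\psi_2^\delta=(\psi_1-\psi_2)\sum_{j=0}^{\delta-1}\psi_1^j\psi_2^{\delta-1-j}$. For the first term, apply the generalized H\"older inequality to split into $\|\psi_1-\psi_2\|_{L^{3\delta}}$, the $\delta-1$ factors of $\psi_1,\psi_2$ in $L^{3\delta}$, and $\|\nabla\psi_1\|_{L^6}\lesssim\|\psi_1\|_{H^2}$, then integrate in time pulling the $L^\infty_t$ norms out and using $\|\nabla\psi_1\|_{L^2_tL^6_x}\lesssim\|\psi_1\|_{L^2_tH^2_x}$; this yields the term $\|\psi_2\|_D^{\delta-1}\|\psi_1\|_D\|\psi_1-\psi_2\|_D$. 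For the second term, H\"older with $\|\psi_2^\delta\|_{L^3}$ (controlled by $\|\psi_2\|_{L^\infty_t L^{3\delta}_x}^\delta$) times $\|\nabla(\psi_1-\psi_2)\|_{L^2_tL^6_x}\lesssim\|\psi_1-\psi_2\|_D$ gives $\|\psi_2\|_D^\delta\|\psi_1-\psi_2\|_D$; symmetrizing the choice of which factor carries the gradient (or simply noting the stated bound is an over-estimate) produces the $\|\psi_1\|_D^\delta$ term as well. Summing gives (a).

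For part (b), recall $g(\psi)=\beta\psi^{2\delta+1}-\beta(1+\gamma)\psi^{\delta+1}$, so it suffices to estimate $\psi_1^{m}-\psi_2^{m}$ for $m=2\delta+1$ and $m=\delta+1$. Factor $\psi_1^m-\psi_2^m=(\psi_1-\psi_2)\sum_{j=0}^{m-1}\psi_1^j\psi_2^{m-1-j}$ and split each monomial $\psi_1^j\psi_2^{m-1-j}$ via generalized H\"older exactly as in Proposition \ref{pps:GBHE-selfmap-I}(b): one factor of $\psi_1-\psi_2$ measured in $L^6$, a block of $2\delta$ (resp.\ nothing extra for the lower-order term) factors measured in $L^{3\delta}$ pulled out in $L^\infty_t$, and the remaining factor measured in $L^{6(\delta+1)}$ integrated in time against $L^{2(\delta+1)}_t$ (using the $\|\psi_1-\psi_2\|_{L^2_tL^6_x}$ or $L^{2(\delta+1)}_tL^{6(\delta+1)}_x$ norm, both controlled by $\|\cdot\|_D$). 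This produces the homogeneous-of-degree-$2\delta$ term from $\psi^{2\delta+1}$ and the degree-$\delta$ term from $\psi^{\delta+1}$, giving the stated bound after collecting constants into $M_2$.

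The main obstacle is purely bookkeeping: one must verify that in every dimension $d\in\{1,2,3\}$ and for every admissible $\delta$ (as in \eqref{eqdef:delta}) the exponents in the generalized H\"older splittings add up correctly and that each resulting factor is genuinely controlled by one of the four norms defining $\|\cdot\|_D$ — in particular that $H^1_0(\Omega)\hookrightarrow L^{3\delta}(\Omega)$ holds, which forces $\delta\le 2$ when $d=3$ (consistent with \eqref{eqdef:delta}) and is automatic for $d=1,2$. No new idea beyond the H\"older/Sobolev/embedding toolkit of Proposition \ref{pps:GBHE-selfmap-I} is needed; one only has to track the combinatorial sum over $j$ in the factorization, which contributes a harmless constant depending on $\delta$.
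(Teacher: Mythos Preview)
Your proposal is correct and follows essentially the same route as the paper: the same add--subtract decomposition of $h(\psi_1)-h(\psi_2)$, the same factorization of the polynomial differences (the paper writes $\psi_1^n-\psi_2^n=n\int_0^1(\theta\psi_1+(1-\theta)\psi_2)^{n-1}(\psi_1-\psi_2)\,d\theta$ instead of your algebraic telescoping sum, which is equivalent), and the same generalized H\"older/Sobolev machinery from Proposition~\ref{pps:GBHE-selfmap-I}. The only minor technical deviation is in part~(b) for $d=3$, $\delta=2$: the paper places $\psi_1-\psi_2$ in $L^\infty_tL^6_x$ and the remaining $2\delta$ factors in $L^{6\delta}_x$, which then requires an extra interpolation step ($L^{6\delta}$ between $L^{6(\delta-1)}$ and $L^{6(\delta+1)}$) to close; your suggestion of mimicking the $L^{6(\delta+1)}\times L^{3\delta}$ split of Proposition~\ref{pps:GBHE-selfmap-I}(b) directly avoids this interpolation, but your written exponent count (``a block of $2\delta$ factors in $L^{3\delta}$ \emph{and} a remaining factor in $L^{6(\delta+1)}$'') is off by one---only $\delta$ of the $2\delta$ surviving factors go in $L^{3\delta}$, with the other $\delta$ joining $\psi_1-\psi_2$ in the $L^{6(\delta+1)}$ slot.
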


\begin{proof}
(a) First, we write 
{\small\begin{align}\label{eqn-diff}
 h(\psi_1) - h(\psi_2) =    \alpha  \psi_1^\delta \nabla \psi_1 \cdot \boldsymbol{1} -  \alpha  \psi_2^\delta  \nabla \psi_2 \cdot \boldsymbol{1} = \alpha \left( (\psi_1^\delta - \psi_2^\delta) \nabla \psi_1\cdot\boldsymbol{1} + \psi_2^\delta (\nabla \psi_1 - \nabla \psi_2)\cdot \boldsymbol{1}  \right).
\end{align}}
To estimate the second term, we use generealized H\"{o}lder's inequality as 
\begin{align*}
	\|\psi_2^\delta (\nabla \psi_1 - \nabla \psi_2)\cdot \boldsymbol{1}\|_{L^2(0,\infty; L^2(\Omega))}^2 & = \int_0^\infty \| \psi_2^\delta(t) \nabla (\psi_1  -  \psi_2 )(t)\cdot \boldsymbol{1}\|^2 dt \\
	& \le C \int_0^\infty  \| \psi_2(t)^\delta\|_{L^3(\Omega)}^2 \|\nabla (\psi_1(t) -  \psi_2(t))\|_{L^6(\Omega)}^2 dt\\
	& \le C \|\psi_2\|_{L^{\infty}(0,\infty; L^{3\delta}(\Omega))}^{2\delta} \|\psi_1-\psi_2\|_{L^2(0,\infty; H^2(\Omega))}^2.
\end{align*}
Let us set the function $\varphi(\psi)=u^{\psi}$. Then, we have 
\begin{align}\label{aux_res1}
\psi_1^{\delta}-\psi_2^{\delta}&=\varphi(\psi_1)-\varphi(\psi_2)=\int_0^1\frac{d}{d\theta}\varphi(\theta\psi_1+(1-\theta)\psi_2)d\theta\no\\&=\int_0^1\varphi'(\theta\psi_1+(1-\theta)\psi_2)(\psi_1-\psi_2)d\theta\no\\
&=\delta\int_0^1(\theta\psi_1+(1-\theta)\psi_2)^{\delta-1}(\psi_1-\psi_2)d\theta.
\end{align}
Now we estimate the first term in the right hand side of \eqref{eqn-diff}  using generalized H\"older's inequality and Sobolev's embedding as 
\begin{align*}
 \|(\psi_1^\delta - \psi_2^\delta) \nabla \psi_1 & \cdot\boldsymbol{1}\|_{L^2(0,\infty; L^2(\Omega))}^2  = \int_0^\infty \| (\psi_1^\delta(t) - \psi_2^\delta(t)) \nabla \psi_1(t)\cdot\boldsymbol{1}\|^2 dt \\
 & = \int_0^\infty  \bigg\|(\psi_1(t) - \psi_2(t))\bigg( \int_0^1(\theta\psi_1(t) +(1-\theta)\psi_2(t))^{\delta-1} \, d\theta\bigg)\nabla \psi_1(t)\cdot\boldsymbol{1}\bigg\|^2 dt\\
 & \le C \int_0^\infty \|\psi_1(t) -\psi_2(t)\|_{L^6(\Omega)}^2 \| |\psi_1(t) |^{\delta-1}+ |\psi_2(t)|^{\delta-1}\|_{L^6(\Omega)}^2 \|\nabla \psi_1(t)\|_{L^6(\Omega)}^2 dt \\
 & \le C \|\psi_1 - \psi_2\|_{L^\infty(0,\infty; H^1_0(\Omega))}^2 \Big( \|\psi_1\|_{L^\infty(0,\infty; L^{6(\delta-1)}(\Omega))}^{2(\delta-1)} \\
 & \qquad \qquad  + \|\psi_2\|_{L^\infty(0,\infty; L^{6(\delta-1)}(\Omega))}^{2(\delta-1)} \Big) \| \psi_1\|_{L^2(0,\infty; H^2(\Omega))}^2.
\end{align*}
Combining the above two estimates, one can conclude the proof of part (a).

\medskip
\noindent (b) Note that 
\begin{align*}
    g(\psi_1) - g(\psi_2) & =  \beta \psi_1^{2\delta+1}-\beta (1+\gamma)\psi_1^{\delta+1} - \beta \psi_2^{2\delta+1}+\beta (1+\gamma)\psi_2^{\delta+1} \\
    & = \beta ( \psi_1^{2\delta+1} - \psi_2^{2\delta+1}) -  \beta (1+\gamma)(\psi_1^{\delta+1}-\psi_2^{\delta+1}). 
\end{align*}
Let us estimate the above terms one by one. Considering the fact \eqref{aux_res1} and using generalized H\"{o}lder's inequality, we get 
\begin{align*}
  &  \left\| \psi_1^{2\delta+1} - \psi_2^{2\delta+1}\right\|_{L^2(0,\infty; L^2(\Omega))}^2 \no\\& = \int_0^\infty \left\| \left(\psi_1(t) - \psi_2(t)\right) \int_0^1(\theta \psi_1(t) +(1-\theta) \psi_2(t))^{2\delta}\, d\theta\right\|^2 dt \\
    & \le C \int_0^\infty \big\| \psi_1(t) - \psi_2(t) \big\|_{L^6(\Omega)}^2  \left( \big\| \psi_1^{2\delta}(t) \big\|_{L^3(\Omega)}^2 + \big\| \psi_2^{2\delta}(t) \big\|_{L^3(\Omega)}^2 \right) dt \\
    & \le C_s \|\psi_1-\psi_2\|_{L^\infty(0,\infty;H^1_0(\Omega))}^{2} \int_0^\infty\left( \|\psi_1(t)\|_{ L^{6\delta}(\Omega)}^{4\delta} + \|\psi_2(t)\|_{ L^{6\delta}(\Omega)}^{4\delta}\right)dt.\\
\end{align*}
Due to Sobolev's embedding, for $\delta=1,$ or $d=2,$ the above estimate is enough. However for $\delta= 2$ and $d=3,$  we use interpolation inequality  to have
\begin{align*}
	\int_0^\infty  \|\psi_i(t)\|_{ L^{6\delta}(\Omega)}^{4\delta} dt & \le C \int_0^\infty \|\psi_i(t)\|_{ L^{6(\delta-1)}(\Omega)}^{2\delta} \|\psi_i(t)\|_{ L^{6(\delta+1)}(\Omega)}^{2\delta} dt \\
	& \le C \int_0^\infty \|\psi_i(t)\|_{ L^{6(\delta-1)}(\Omega)}^{2(\delta-1)} \|\psi_i(t)\|_{ L^{6(\delta+1)}(\Omega)}^{2(\delta+1)}dt  \\
	&\le C_s \|\psi_i\|_{L^\infty (0,\infty; H^1_0(\Omega))}^{2(\delta-1)} \|\psi_i\|_{L^{2(\delta+1)} (0,\infty; L^{6(\delta+1)}(\Omega))}^{2(\delta+1)}, \quad i=1,2.
\end{align*}
Similarly, one can obtain
\begin{align*}
	\left\| \psi_1^{\delta+1} - \psi_2^{\delta+1}\right\|_{L^2(0,\infty; L^2(\Omega))}^2 & = \int_0^\infty \left\| \left(\psi_1(t)- \psi_2(t)\right)\int_0^1 (\theta \psi_1(t) +(1-\theta) \psi_2(t))^{\delta}\, d\theta\right\|^2 dt \\
	& \le  \int_0^\infty \big\| \psi_1(t) - \psi_2(t) \big\|_{L^6(\Omega)}^2  \left( \big\| \psi_1^{\delta}(t)\big\|_{L^3(\Omega)}^2 + \big\| \psi_2^{\delta}(t) \big\|_{L^3(\Omega)}^2 \right) dt \\
	& \le C_s \|\psi_1-\psi_2\|_{L^\infty(0,\infty;H^1_0(\Omega))}^{2} \int_0^\infty\left( \|\psi_1(t)\|_{ L^{3\delta}(\Omega)}^{2\delta} + \|\psi_2(t)\|_{ L^{3\delta}(\Omega)}^{2\delta}\right)dt. 
\end{align*}
Then,  combining the above estimates, we arrive at the required result.
\end{proof}

\begin{proof}[Proof of Theorem \ref{th:localStrongsolGBHE}]
Recall that with $z(\cdot,t)=\int_0^t e^{-\lambda(t-s)}y(\cdot,s) ds,$ we can re-write the problem \eqref{eq:GBHE-f} as
\begin{equation} \label{eq:GBHE-f-coup}
\left\{
    \begin{aligned}
& y_t - \eta \Delta y +  \alpha y\sum_{i=1}^d \frac{\partial y}{\partial x_i} - \kappa\Delta z + \beta y(y^\delta-1)(y^\delta-\gamma) = f \ \text{ in }\  \Omega\times (0,T), \\
& z_t+\lambda z - y =0 \ \text{ in }\  \Omega\times (0,T),\\
& y=0 , \, \& \,z=0  \ \text{ on }\  \Gamma\times (0,T), \\
        & y(0)=y_0, \, \& \, z(0)=0 \ \text{ in }\ \Omega.
    \end{aligned}\right.
\end{equation}
\textbf{Step. 1.} Let us first focus on the corresponding linear equation, 
\begin{equation} \label{eq:GBHE-f-coupLin}
\left\{
    \begin{aligned}
& \bar y_t - \eta \Delta \bar y +\beta\gamma \bar y - \kappa\Delta \bar z =\bar f \ \text{ in }\  \Omega\times (0,\infty), \\
& \bar z_t+\lambda\bar z - \bar y =0 \ \text{ in }\  \Omega\times (0,\infty),\\
& \bar y=0 , \, \& \,\bar z=0  \ \text{ on }\  \Gamma\times (0,\infty), \\
        & \bar y(0)=\bar y_0, \, \& \, \bar z(0)=0 \ \text{ in }\ \Omega.
    \end{aligned}\right.
\end{equation}
The above equation can be written in the form of \eqref{eq:opform-f}, and hence using Theorem \ref{th:welPosedLinOp}(b), we obtain a unique strong solution $\begin{pmatrix}\bar{y}\\ \bar{z} \end{pmatrix} \in L^2(0,\infty;\Hf)$ of \eqref{eq:GBHE-f-coupLin}. Now, to prove the regularity of $\bar{y}, \bar{z},$ set $v(x,t)=\bar y(x,t)+\frac{\kappa}{\eta} \bar z(x,t).$ Then from \eqref{eq:GBHE-f-coupLin}, one can easily obtain 
\begin{equation} \label{eq:GBHE-f-coupLinHt}
\left\{
    \begin{aligned}
& v_t - \eta \Delta v=\bar f+ \left(\frac{\kappa}{\eta} - \beta\gamma\right)\bar y-\frac{\lambda\kappa}{\eta} \bar z=:F   \ \text{ in }\  \Omega\times (0,\infty), \\
& v=0 ,  \ \text{ on }\  \Gamma\times (0,\infty), \\
        & \bar v(0)= \bar y_0 \ \text{ in }\ \Omega.
    \end{aligned}\right.
\end{equation}
Since, we already have $\bar y, \bar z \in L^2(0,\infty;L^2(\Omega))$, and hence $F\in L^2(0,\infty;L^2(\Omega)).$ Therefore, for $\bar y_0 \in H^1_0(\Omega),$ Proposition \ref{pps:heatStrongL2L6} yields the existence of a strong unique solution $v$ of \eqref{eq:GBHE-f-coupLinHt} such that $ v\in L^2(0,\infty; H^2(\Omega)) \cap L^\infty(0,\infty; H^1_0(\Omega)) \cap L^{2(\delta+1)}(0,\infty; L^{6(\delta+1)}(\Omega))$ with $v_t \in L^2(0,\infty; L^2(\Omega)),$ and the following estimate holds:
\begin{equation} \label{eq:est-linCoup-C1}
\begin{aligned}
\|v\|_{L^2(0,\infty; H^2(\Omega))} + \|v\|_{L^\infty(0,\infty; H^1_0(\Omega))} + &\|v\|_{L^{2(\delta+1)}(0,\infty; L^{6(\delta+1)}(\Omega))} + \|v\|_{H^1(0,\infty;L^2(\Omega))} \\
& \le C_1(\|\bar y_0\|_{H^1_0(\Omega)} +\|\bar f\|_{L^2(0,\infty;L^2(\Omega))}),
\end{aligned}
\end{equation}
for some $C_1>0.$ Furthermore, it is easy to observe that
\begin{equation}
	\left\{
\begin{aligned}
& \bar z_t(x,t)  =-\left(\frac{\kappa}{\eta} +\lambda\right)\bar z(x,t) +v(x,t) \ \text{ in }\ \Omega\times (0,\infty), \\
& \bar z(x,t)=0 \ \text{ on }\  \Gamma\times (0,\infty), \\
& \bar z(x,0)=0 \ \text{ in }\  \Omega.
\end{aligned}
\right.
\end{equation}
Then, one can write the solution of the above equation as
\begin{align*}
\bar z(x,t)=\int_0^t e^{- (\frac{\kappa}{\eta} +\lambda) (t-s)}v(x,s) ds\  \text{ for all }\ (x,t) \in \Omega\times (0,\infty).
\end{align*}
Thus, we can easily get $\bar z\in H^1(0,\infty; H^2(\Omega)\cap H^1_0(\Omega)) \cap L^{2(\delta+1)}(0,\infty; L^{6(\delta+1)}(\Omega))$ and hence we obtain a unique strong solution  $\bar y \in L^2(0,\infty; H^2(\Omega)) \cap L^\infty(0,\infty; H^1_0(\Omega)) \cap L^{2(\delta+1)}(0,\infty; L^{6(\delta+1)}(\Omega))$ of \eqref{eq:GBHE-f-coupLin}.\\

\noindent \textbf{Step. 2.} In this step, using Propositions \ref{pps:GBHE-selfmap-I} - \ref{pps:GBHE-contrac-I}, we apply the Banach fixed-point theorem to conclude the result. For given $\psi \in D,$ let $y^\psi, z^\psi$ satisfy 
\begin{equation} \label{eq:GBHE-gh-coup}
\left\{
    \begin{aligned}
& y^\psi_t - \eta \Delta y^\psi +\beta\gamma y^\psi  - \kappa\Delta z^\psi = f +g(\psi)+h(\psi), \ \text{ in }\  \Omega\times (0,\infty), \\
& z^\psi_t+\lambda z^\psi - y^\psi =0 \ \text{ in }\  \Omega\times (0,\infty),\\
& y^\psi=0 , \, \& \,z^\psi=0  \ \text{ on }\  \Gamma\times (0,\infty), \\
        & y^\psi(0)=y_0, \, \& \, z^\psi(0)=0 \ \text{ in }\ \Omega,
    \end{aligned}\right.
\end{equation}
where $h$ and $g$ be as given in \eqref{eq:NL-h-GBHE} and \eqref{eq:NL-g-GBHE}, respectively. From Proposition \ref{pps:GBHE-selfmap-I}, it is clear that $h(\psi), g(\psi) \in L^2(0,\infty; L^2(\Omega)),$ consequently, we can apply Step 1 to get a unique strong solution $y^\psi \in D$ and $z^\psi \in H^1(0,\infty; H^2(\Omega)\cap H^1_0(\Omega))$ satisfying
\begin{align*}
    \|y^\psi\|_D+\|z^\psi\|_{H^1(0,\infty; H^2(\Omega))} \le & C_1 \Big( \|y_0\|_{H^1_0(\Omega)}  +\|f\|_{L^2(0,\infty; L^2(\Omega))} \\
    & +\|h(\psi)\|_{L^2(0,\infty; L^2(\Omega))}+\|g(\psi)\|_{L^2(0,\infty; L^2(\Omega))}\Big),
\end{align*}
where $C_1>0$ is as in \eqref{eq:est-linCoup-C1}. Now, for any $\rho>0,$ let us define the balls
\begin{equation} \label{eq:D_rho - B_rho}
\begin{aligned} 
    D_\rho=\{\psi \in D\, |\, \|\psi\|_D \le \rho \} \text{ and } B_\rho =\{ \psi \in H^1(0,\infty; H^2(\Omega)\cap H^1_0(\Omega)) \, |\, \|\psi\|_{H^1(0,\infty; H^2(\Omega))}\le \rho \}.
\end{aligned}
\end{equation}
For a given $\rho>0,$ let $\mathcal{S}_1: D_\rho \to D_\rho \times B_\rho $ be the solution map such that $$\mathcal{S}_1(\mathcal{\psi})=(y^\psi, z^\psi),$$ where for a given $\psi\in D,$ $(y^\psi, z^\psi)$ is the solution pair of the system \eqref{eq:GBHE-gh-coup}. And, we define the map $\mathcal{S}_2: D_\rho \times B_\rho \to D_\rho $ by $$\mathcal{S}_2 (y^\psi, z^\psi)= y^\psi. $$ Let $\mathcal{S}:=\mathcal{S}_2\circ \mathcal{S}_1: D_\rho \rightarrow D_\rho.$ To show the map $\mathcal{S}$ is well-defined, we assume that $\|y_0\|_{H^1_0(\Omega)}  \le \frac{\rho}{4C_1}$ and $\|f\|_{L^2(0,\infty; L^2(\Omega))} \le \frac{\rho}{4C_1}.$
Now, we obtain 
\begin{align*}
    \|y^\psi\|_D+\|z^\psi\|_{H^1(0,\infty; H^2(\Omega))} & \le C_1 \Big( \|y_0\|_{H^1_0(\Omega)}   +\|f\|_{L^2(0,\infty; L^2(\Omega))} \\
    & \qquad \qquad  +\|h(\psi)\|_{L^2(0,\infty; L^2(\Omega))}+\|g(\psi)\|_{L^2(0,\infty; L^2(\Omega))}\Big). \\
    & \le C_1 \left( \frac{\rho}{4C_1} +  \frac{\rho}{4C_1} +  2M_1\rho^{\delta+1}+M_1\rho^{2\delta+1}    \right) \le \rho,
 \end{align*} 
for $\rho \le \min\left\lbrace \frac{1}{(8C_1M_1)^{\frac{1}{\delta}}}, \frac{1}{(4C_1M_1)^{\frac{1}{2\delta}}} \right\rbrace.$ Thus $\mathcal{S}=\mathcal{S}_2\circ\mathcal{S}_1: D_\rho \to D_\rho$ is well defined for all $0<\rho \le \min\left\lbrace \frac{1}{(8C_1M_1)^{\frac{1}{\delta}}}, \frac{1}{(4C_1M_1)^{\frac{1}{2\delta}}} \right\rbrace.$ 
Next, we aim to show that $\mathcal{S}: D_\rho \to D_\rho$ is contraction for all $0<\rho\le \rho_1$ for some $\rho_1>0.$

Let  for $\psi_1, \psi_2 \in D_\rho,$ $(y^{\psi_1},z^{\psi_1})$ and $(y^{\psi_2},z^{\psi_2})$ be the corresponding solutions of \eqref{eq:GBHE-gh-coup}. Then $\yb=y^{\psi_1} - y^{\psi_2}$ and $\zb =z^{\psi_1} - z^{\psi_2}$ satisfy

\begin{equation} \label{eq:GBHE-yb-zb-coup}
\left\{
    \begin{aligned}
& \yb_t - \eta \Delta \yb +\beta\gamma \yb - \kappa\Delta \zb =  g(\psi_1)-g(\psi_2)+h(\psi_1)- h(\psi_2), \ \text{ in }\  \Omega\times (0,\infty), \\
& \zb_t+\lambda \zb - \yb =0 \ \text{ in }\  \Omega\times (0,\infty),\\
& \yb=0 , \, \& \, \zb=0  \ \text{ on }\  \Gamma\times (0,\infty), \\
& \yb(0)=0, \, \& \, \zb(0)=0 \ \text{ in }\ \Omega.
    \end{aligned}\right.
\end{equation}
Now, again using Step 1 and Propositions \ref{pps:GBHE-selfmap-I} - \ref{pps:GBHE-contrac-I}, we obtain 
\begin{align*}
    \|\yb\|_D+\|\zb\|_{H^1(0,\infty; H^2(\Omega))} & \le C_1 \Big(  \|h(\psi_1)-h(\psi_2)\|_{L^2(0,\infty; L^2(\Omega))}+\|g(\psi_1)-g(\psi_2)\|_{L^2(0,\infty; L^2(\Omega))}\Big) \\
    & \le C_1 M_2 \left( 2\rho^{2\delta} +5\rho^{\delta}    \right)  \|\psi_1 - \psi_2\|_D .
\end{align*}
Now, choosing 
\begin{align}\label{eq:rho0}
\rho_1=\min\left\lbrace \frac{1}{(8C_1M_1)^{\frac{1}{\delta}}}, \frac{1}{(4C_1M_1)^{\frac{1}{2\delta}}}, \frac{1}{(6C_1M_2)^{\frac{1}{2\delta}}}, \frac{1}{(15C_1M_2)^{\frac{1}{\delta}}} \right\rbrace,
\end{align}
 we obtain 
\begin{align*}
    \|y^{\psi_1} - y^{\psi_2}\|_D \le \frac{2}{3}\|\psi_1 - \psi_2\|_D,
\end{align*}
and hence $\mathcal{S}$ is a contraction map. Thus using the Banach fixed point theorem, we obtain a unique strong solution 
\begin{align*}
& y \in L^2(0,\infty; H^2(\Omega))\cap L^\infty(0,\infty; H^1_0(\Omega)) \cap L^{2(\delta+1)}(0,\infty; L^{6(\delta+1)}(\Omega))\cap H^1(0,\infty; L^2(\Omega)), \\
& z\in 	H^1(0,\infty; H^2(\Omega)\cap H^1_0(\Omega)),
\end{align*}
for all $y_0\in H^1_0(\Omega)$ and $f \in L^2(0,\infty; L^2(\Omega))$ with 
\begin{align*}
    \|y_0\|_{H^1_0(\Omega)}  \le \frac{\rho}{4C_1} \text{ and } \|f\|_{L^2(0,\infty; L^2(\Omega))} \le \frac{\rho}{4C_1},
\end{align*}
for all $0<\rho \le \rho_1,$ where $\rho_1$ as in \eqref{eq:rho0}.

\medskip 
\noindent \textbf{Step 3.} (Global solution). In this step, we first aim to show that 
\begin{align*}
	\int_0^T \||y(s)|^\delta \nabla y(s)\|^2 ds \le C \left( \|y_0\|_{H^1_0(\Omega)}^2+\|f\|_{L^2(0,T; L^2(\Omega))}^2 \right),
\end{align*}
for some $C>0.$ To do so, we take an inner product in the first equation of \eqref{eq:GBHE-f-coup} with $|y|^{2\delta}y$ to find 
\begin{equation} \label{eq:GbWkest-2}
	\begin{aligned}
		(y_t, |y|^{2\delta}y) - \eta (\Delta y, |y|^{2\delta}y) & +  (\alpha y^\delta \nabla y \cdot \boldsymbol{1}, |y|^{2\delta}y) -\kappa (\Delta z, |y|^{2\delta}y) +(\beta y^{2\delta +1} , |y|^{2\delta}y ) + (\beta \gamma y, |y|^{2\delta}y ) \\
		& =   (\beta(1+\gamma) y^{\delta+1}, |y|^{2\delta}y ) + (f , |y|^{2\delta}y).
	\end{aligned}
\end{equation}
Note that for sufficiently small initial data,  $|y|^\delta y\in L^2(0,\infty; H^1_0(\Omega))$ and hence using absolute continuity lemma \cite[Theorem 3, Chapter 5, Section 5.9]{Eva} (Lions-Magenes lemma), we obtain for a.e. $t\in[0,T]$
\begin{align*}
(y_t(t), |y(t)|^{2\delta}y(t))=(|y(t)|^{\delta}y_t(t), |y(t)|^{\delta}y(t)) = \frac{1}{2(\delta+1)} \frac{d}{dt}\|y(t)\|_{L^{2(\delta+1)}(\Omega)}^{2(\delta+1)}.
\end{align*}
Also, one can notice the following:
\begin{align*}
	& - \eta (\Delta y, |y|^{2\delta}y) = \eta (\nabla y, (2\delta+1)|y|^{2\delta}\nabla y) = \eta (2\delta+1) \||y|^\delta \nabla y|^2,\\
	& (\alpha y^\delta \nabla y \cdot \boldsymbol{1}, |y|^{\delta}y)=0,\\
	& -\kappa (\Delta z, |y|^{2\delta} y) = \kappa (2\delta+1)  \left( \int_0^t e^{-\lambda(t-s)}\nabla y(x,s) \,ds, |y(x,t)|^{2\delta}\nabla y(x,t)\right) \\
	&  \hspace{2.6cm} = \kappa (2\delta+1) \left( |y(x,t)|^{2\delta} \int_0^t e^{-\lambda(t-s)}\nabla y(x,s) \,ds,  \nabla y(x,t)\right) \ge 0,\\
	& (\beta y^{2\delta +1} , |y|^{2\delta} y ) = \beta \||y|^{2\delta} y\|^2 = \beta\| y\|_{L^{4\delta+2}}^{4\delta+2}, \\
	& (\beta \gamma y, |y|^{2\delta}y ) =\beta \gamma (|y|^{\delta}y, |y|^{\delta}y)=\beta \gamma \||y|^{\delta}y\|^2= \beta \gamma\|y\|_{L^{2(\delta+1)}(\Omega)}^{2(\delta+1)}\\
	& (\beta(1+\gamma) y^{\delta+1}, |y|^{2\delta}y ) \le \frac{\beta}{4}\|y\|_{L^{4\delta+2}(\Omega)}^{4\delta+2}  + \beta (1+\gamma)^2 \|y\|_{L^{2(\delta+1)}(\Omega)}^{2(\delta+1)}, \\
	&| (f , |y|^{2\delta}y)|  \le \frac{\beta}{4}\|y\|_{L^{4\delta+2}(\Omega)}^{4\delta+2} + \frac{1}{\beta}\|f\|^2.
\end{align*}
where in the third estimate we have used  \cite[Lemma A.2]{MTMSSS}.  Utilization of the above estimates in \eqref{eq:GbWkest-2} leads to 
\begin{align*}
&	\frac{1}{2(\delta+1)} \frac{d}{dt}\|y(t)\|_{L^{2(\delta+1)}(\Omega)}^{2(\delta+1)} + \eta (2\delta+1) \||y|^\delta \nabla y|^2 + \frac{\beta}{2}\|y\|_{L^{4\delta+2}(\Omega)}^{4\delta+2} \\&\le \beta (1+\gamma)^2 \|y\|_{L^{2(\delta+1)}(\Omega)}^{2(\delta+1)} + \frac{1}{\beta}\|f\|^2,
\end{align*}
which further on integration can be written as 
\begin{align*}
	&\frac{1}{2(\delta+1)} \|y(t)\|_{L^{2(\delta+1)}(\Omega)}^{2(\delta+1)} + \eta (2\delta+1) \int_0^t\||y(s)|^\delta \nabla y(s)|^2\, ds + \frac{\beta}{2} \int_0^t\|y(s)\|_{L^{4\delta+2}(\Omega)}^{4\delta+2}\, ds \\
	& \le \frac{1}{2(\delta+1)} \|y_0\|_{L^{2(\delta+1)}(\Omega)}^{2(\delta+1)} +\beta (1+\gamma)^2\int_0^t \|y(s)\|_{L^{2(\delta+1)}(\Omega)}^{2(\delta+1)}\,ds + \frac{1}{\beta}\int_0^t\|f(s)\|^2\, ds.
\end{align*}
An application of Gronwalls' inequality further gives 
\begin{equation} \label{eq:est4ydgrad6}
\begin{aligned}
	\sup_{0\le t\le T} \|y(t)\|_{L^{2(\delta+1)}(\Omega)}^{2(\delta+1)} & + 2\eta(\delta+1) (2\delta+1) \int_0^T\||y(s)|^\delta \nabla y(s)|^2\, ds + \beta(\delta+1) \int_0^T\|y(s)\|_{L^{4\delta+2}(\Omega)}^{4\delta+2}\, ds \\
	& \le C \left( \|y_0\|_{H^1_0(\Omega)}^{2(\delta+1)}+ \frac{2(\delta+1)}{\beta}\|f\|_{L^2(0,T; L^2(\Omega))}^2 \right)e^{\beta(1+\gamma)^2T}.
\end{aligned}
\end{equation}

Next, taking the inner-product with $-\Delta y$ in the first equation of \eqref{eq:GBHE-f-coup}, we obtain for a.e. $t\in[0,T]$
\begin{equation} \label{eq:GbWkest}
\begin{aligned}
&\frac{1}{2}\frac{d}{dt}\|\nabla y(t)\|^2 + \eta \|\Delta y(t)\|^2 +\kappa (\Delta z(t), \Delta y(t)) -(\beta y^{2\delta +1}(t) , \Delta y(t) ) \\
& =  (\alpha y^\delta(t) \nabla y(t) \cdot \boldsymbol{1}, \Delta y(t)) - (\beta(1+\gamma) y^{\delta+1}(t) +\beta \gamma y(t), \Delta y(t) ) - (f(t) , \Delta y(t)).
\end{aligned}
\end{equation}
One can easily observe the following:
\begin{align*}
& \kappa (\Delta z, \Delta y) = \kappa \int_0^t e^{-\lambda(t-s)}\Delta y(x,s) \Delta y(x,t) \, ds \ge 0, \\
& -(\beta y^{2\delta +1} , \Delta y ) = \beta (2\delta +1) (y^{2\delta} \nabla y, \nabla y) = \beta (2\delta +1) \|y^\delta \nabla y\|^2, \\
 & |(\alpha y^\delta \nabla y \cdot \boldsymbol{1}, \Delta y)| \le \alpha \| y^\delta \nabla y\| \|\Delta y\| \le \frac{\eta}{4}\|\Delta y\|^2 + \frac{\alpha^2}{\eta}  \| y^\delta \nabla y\|^2,\\
 & | (\beta(1+\gamma) y^{\delta+1} , \Delta y )  | \le  \beta(1+\gamma) (\delta +1) \|y^\delta \nabla y\| \|  \nabla y\| \le \frac{\beta (2\delta+1)}{2} \|y^\delta \nabla y\|^2 + C\|\nabla y\|^2 \\
 & (\beta \gamma y, \Delta y ) = -\beta \gamma \|\nabla y\|^2, \text{ and } | (f , \Delta y)|  \le \frac{\eta}{4}\|\Delta y\|^2 + \frac{1}{\eta}\|f\|^2,
\end{align*}
where in the first estimate we have used \cite[Lemma A.2]{MTMSSS}. Utilization of the above estimates in \eqref{eq:GbWkest} leads to 
\begin{align*}
  &  \frac{d}{dt}\|\nabla y(t)\|^2 +\eta \|\Delta y(t)\|^2 + \beta (2\delta+1)  \|y^\delta(t) \nabla y(t)\|^2\\& \le C\|\nabla y(t)\|^2 +  2\frac{\alpha^2}{\eta}  \|y^\delta(t) \nabla y(t)\|^2 +\frac{1}{\eta}\|f(t)\|^2.
\end{align*}
Performing integration over time in $(0,T)$ and applying Gronwall's inequality, one obtains
\begin{align*}
  \sup_{0\le t\le T}\|\nabla y(t)\|^2 & +\eta \int_0^T \|\Delta y(t)\|^2 dt +   \beta (2\delta+1)  \int_0^T  \|y^\delta(t) \nabla y(t)\|^2 dt \\
  & \le C\left ( \|\nabla y_0\|^2 + 2\frac{\alpha^2}{\eta}  \int_0^T  \|y^\delta(t) \nabla y(t)\|^2 dt +\frac{1}{\eta} \|f\|_{L^2(0,T; L^2(\Omega))}^2 \right).
\end{align*}
We conclude the proof by using \eqref{eq:est4ydgrad6} by noting that 
\begin{align*}
    \|y\|_{L^{2(\delta+1)}(0,T; L^{6(\delta+1)}(\Omega))}^{2(\delta+1)}  = \int_0^T \|(y(t))^{\delta+1}\|_{L^6(\Omega)}^2 dt  \le C(\delta+1)^2 \int_0^T \| (y(t))^{\delta} \nabla y(t)\|^2 dt,
\end{align*}
and the estimate \eqref{en-est} follows. 
\end{proof}

\begin{Remark}
	From Step 2, we observe that in the case of an infinite time horizon, a local solution exists in the sense that, under the smallness assumption on the initial data and forcing term, we obtain a strong solution over an infinite time horizon. However, in the case of a finite time horizon, we establish the existence of a global solution.
\end{Remark}

\section{Stabilizability around the zero state} \label{sec:stabaroundzero}
This section focuses on the stabilizability of 
\begin{equation} \label{eq:GBHE-C0}
	\left\{
	\begin{aligned}
		& y_t - \eta \Delta y +  \alpha y^\delta \sum_{i=1}^d \frac{\partial y}{\partial x_i} - \kappa \int_0^t e^{-\lambda (t-s)}\Delta y(s)ds  = \beta y(1-y^\delta)(y^\delta-\gamma)+ u\chi_\mathcal{O} \ \text{ in }\  \Omega\times (0,\infty), \\
		& y=0  \ \text{ on }\  \Gamma\times (0,\infty), \\
		& y(x,0)=y_0 \ \text{ in }\ \Omega,
	\end{aligned}\right.
\end{equation}
around the zero steady state by using a localized interior control $u$. Here, $\mathcal{O}\subset \Omega$ is a non-empty open set, and other parameters are the same as discussed in the beginning of this article. Similar to the previous section, with $z(\cdot,t)=\int_0^t e^{-\lambda(t-s)}y(\cdot,s) ds,$ one can re-write the system  \eqref{eq:GBHE} as
\begin{equation} \label{eq:GBHE-u-coup-Z}
	\left\{
	\begin{aligned}
		& y_t - \eta \Delta y +  \alpha y^\delta\sum_{i=1}^d \frac{\partial y}{\partial x_i} - \kappa\Delta z + \beta y(y^\delta-1)(y^\delta-\gamma) = u\chi_\mathcal{O} \ \text{ in }\  \Omega\times (0,\infty), \\
		& z_t+\lambda z - y =0 \ \text{ in }\  \Omega\times (0,\infty),\\
		& y=0 , \, \& \,z=0  \ \text{ on }\  \Gamma\times (0,\infty), \\
		& y(0)=y_0, \, \& \, z(0)=0 \ \text{ in }\ \Omega.
	\end{aligned}\right.
\end{equation}
The corresponding linear system  is
\begin{equation} \label{eq:GBHE-Lin-coup-Z}
	\left\{
	\begin{aligned}
		& y_t - \eta \Delta y +\beta\gamma y - \kappa\Delta z = u\chi_\mathcal{O} \ \text{ in }\  \Omega\times (0,\infty), \\
		& z_t+\lambda z - y =0 \ \text{ in }\  \Omega\times (0,\infty),\\
		& y=0 , \, \& \,z=0  \ \text{ on }\  \Gamma\times (0,\infty), \\
		& y(0)=y_0, \, \& \, z(0)=0 \ \text{ in }\ \Omega.
	\end{aligned}\right.
\end{equation}
In $\Hf=L^2(\Omega)\times L^2(\Omega),$ and with $\Wf(\cdot,t) =\begin{pmatrix} y(\cdot,t) \\ z(\cdot,t) \end{pmatrix},$ the system \eqref{eq:GBHE-Lin-coup-Z} can be re-written as 
\begin{equation} \label{eq:linOpCtrl}
	\Wf'(t)=\Af \Wf(t) + \Bf u(t), \text{ for all }t>0, \quad \Wf(0)= \begin{pmatrix} w_0 \\ 0 \end{pmatrix}=:\boldsymbol{w}_0,
\end{equation}
where the unbounded operator $(\Af, D(\Af))$ is as defined in \eqref{eqdef:A} and the control operator $\Bf : L^2(\Omega) \rightarrow \Hf $ is defined as 
\begin{equation} \label{eqdef:B}
\Bf u= \begin{pmatrix} u \chi_{\mathcal{O}} \\ 0 \end{pmatrix} \text{ for all } u \in L^2(\Omega).
\end{equation}
The corresponding linear system, represented by the operator form \eqref{eq:linOp}, is identical to that studied in \cite{WKR} with $\kappa=1$ and $\beta=0$ or $\gamma=0$. Note that the eigenfunctions, we choose in \eqref{eq:eigfun-A} and \eqref{eq:eigfun-A*} are the same if $\kappa=1.$  Consequently, the analysis provided in Sections 3 and 4 of \cite{WKR} remains applicable to the current scenario. As a result, the findings for the linear system can be directly extended here. To avoid redundancy, we omit the detailed analysis and instead state the main result for the linear system in our context, before proceeding to establish the results for the non-linear system.

\subsection{Stabilizability of the linear system} 
We first aim to study the stabilizability of the system \eqref{eq:GBHE} with decay $\nu \in (0,\nu_0),$ where $\nu_0$ is as in \eqref{eq:nu0} and hence it is convenient to study the stabilizability of the shifted system:
\begin{equation} \label{eq:siftLinSysOP}
\wt{\Wf}'(t)=\Af_\nu \wt{\Wf}(t) + \Bf \wt{u}(t) \quad \text{ for }t>0, \qquad \wt{\Wf}(0)=\Wf_0,
\end{equation}
where $\wt{\Wf}(t)=e^{\nu t}\Wf(t),$ $\wt{u}(t)=e^{\nu t}u(t),$ and
\begin{align} \label{eqdef:A_nu}
    \Af_\nu=\Af+\nu \boldsymbol{I}, \text{ with } D(\Af_\nu)=D(\Af),
\end{align}
with $\boldsymbol{I}:\Hf \to \Hf$ being the identity map. Note that the spectrum of $\Af_\nu,$ denoted by $\sigma(\Af_\nu),$ is $\sigma(\Af_\nu) = \{ \mu_k^++\nu, \mu_k^-+\nu\, |\, k \in \mathbb{N} \} \cup \{ \nu -\nu_0\}.$
From Theorem \ref{th:specAna}, it follows that there exists $N_\nu\in \mathbb{N}$ such that
\begin{align} \label{eqref:SpecCond22}
    \Re(\mu_k^\pm+\nu) <0 \text{ for all }k > N_\nu, \text{ and } \Re(\mu_k^\pm+\nu) \ge 0 \text{ for all }1\le k\le N_\nu .
\end{align}
Consequently, we define
\begin{align} \label{eq:specA_lam+-}
    \sigma_+(\Af_\nu)=\{\mu_k^\pm +\nu \, |\, \Re(\mu_k^\pm +\nu)\ge 0\} \text{ and } \sigma_-(\Af_\nu)=\{\mu_k^\pm +\nu \, |\, \Re(\mu_k^\pm +\nu) < 0\},
\end{align}
and note that $\sigma_+(\Af_\nu)$ is a finite set. Now, one can follow \cite[Proposition 4.1]{WKR} using \cite[Theorem 15.2.1]{Tucs} to establish the stabilizability of the linear system \eqref{eq:linOpCtrl}. Often, the feedback operator $K$ is obtained by studying an optimization problem and by using an algebraic Riccati equation. 
To obtain the feedback operator, consider the following optimal control problem:
\begin{align}\label{eqoptinf_vort}
\min_{\wt{u}\in E_{\Wf_0}} J( \wt{\Wf},\wt{u}) \text{ subject to \eqref{eq:siftLinSysOP}},
\end{align}
where
\begin{equation}
J( \wt{\Wf},\wt{u}):=\int_0^\infty\big( \|\wt{\Wf}(t)\|_{\Hf}^2 + \|\wt{u}(t)\|^2\big) \, dt,
\end{equation}
and $$E_{\Wf_0}:=\{ \wt{u}\in L^2(0,\infty; L^2(\Omega))\mid \wt{\Wf} \text{ solution of \eqref{eq:siftLinSysOP} with control }\wt{u} \text{ such that } J( \wt{\Wf},\wt{u})<\infty\}.$$
The next theorem yields the minimizer of the problem \eqref{eqoptinf_vort} as well as the stabilizing control in the feedback form.  

 \begin{Theorem}[{\normalfont \cite[Theorem 2.2]{WKR}}]\label{th:stb cnt}
Let $\nu \in (0, \nu_0)$ be  any real number. Let $\Af_\nu$ (resp. $\Bf$) be as defined in \eqref{eqdef:A_nu} (resp. \eqref{eqdef:B}). Then the following results hold: 
	\begin{enumerate}
 	\item[(a)] There exists a unique operator $\normalfont\Pf\in \mathcal{L}(\Hf)$ that satisfies the \emph{non-degenerate Riccati equation}
				\begin{equation}\label{eqn:ARE}
					\begin{array}{l}
						\normalfont\Af_\nu^*\Pf+\Pf\Af_\nu-\Pf\Bf\Bf^*\Pf+\boldsymbol{I}=0,\quad \Pf=\Pf^* \geq 0 \ \text{ on }\ \Hf.
					\end{array}
				\end{equation}
	\item[(b)] For any $\Wf_0\in \Hf$, there exists a unique optimal pair $\normalfont(\Wf^\sharp,u^\sharp)$ for \eqref{eqoptinf_vort}, where for all $t>0$, $\Wf^\sharp(t)$ satisfies the \emph{closed loop system}
		\begin{equation}\label{eqcl-loop}
			\Wf{^\sharp}'(t)=(\Af_\nu-\Bf\Bf^*\Pf)\Wf^\sharp(t),\;\; \Wf^\sharp(0)=\Wf_0,
		\end{equation}
		$u^\sharp(t)$ can be expressed in the \emph{feedback form} as
		\begin{equation}\label{eqoptcntrl}
			\normalfont u^\sharp(t)=-\Bf^*\Pf\Wf^\sharp(t), 
		\end{equation}
		and
		$$\displaystyle\min_{\wt{u}\in E_{{\Wf}_0}}\normalfont J( \wt{\Wf},\wt{u})=J(\Wf^\sharp,u^\sharp)=( \Pf\Wf_0,\Wf_0).$$
	\item[(c)] The feedback control given in \eqref{eqoptcntrl} stabilizes the system \eqref{eq:siftLinSysOP}. In particular, the operator $\normalfont\Af_{\nu,\Pf}:=\Af_\nu-\Bf\Bf^*\Pf,$ with $\normalfont D(\Af_{\nu,\Pf})= D(\Af)$, generates an exponentially stable analytic semigroup $\n \{e^{t\Af_{\nu,\Pf}}\}_{t\ge 0}$, that is, there exist $\varsigma>0$ and $M>0$ such that 
	$$\n \|e^{t \Af_{\nu,\Pf}}\|_{\mathcal(\Hf)}\leq Me^{-\varsigma t}\  \text{ for all } \  t>0.$$
 	\end{enumerate}
	\end{Theorem}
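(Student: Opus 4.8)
The plan is to apply the classical infinite-dimensional LQR theory to the pair $(\Af_\nu,\Bf)$, exactly as in \cite[Theorem 2.2]{WKR}, so the only genuine task is to verify the hypotheses of that abstract machinery in the present setting. First I would record the structural facts already available from Section \ref{sec:PrincOp}: by Theorem \ref{th:welPosedLinOp}(a) the operator $(\Af,D(\Af))$ generates an analytic semigroup on $\Hf$, hence so does $(\Af_\nu,D(\Af))$ since $\Af_\nu=\Af+\nu\boldsymbol I$ is a bounded perturbation; and $\Bf\in\mathcal L(L^2(\Omega),\Hf)$ is bounded, so the control operator is of the simplest (bounded) type. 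Next I would invoke the spectral decomposition of Theorem \ref{th:specAna}: the spectrum $\sigma(\Af_\nu)$ splits into the finite \emph{unstable} part $\sigma_+(\Af_\nu)$ (the finitely many $\mu_k^\pm+\nu$ with nonnegative real part, $1\le k\le N_\nu$) and the \emph{stable} part $\sigma_-(\Af_\nu)$, which satisfies $\sup\{\Re z:z\in\sigma_-(\Af_\nu)\}<0$ because $\mu_k^+\to-\nu_0<-\nu$ and $\mu_k^-\to-\infty$ while $\nu<\nu_0$. Using the Riesz basis of eigenfunctions \eqref{eq:eigfun-A}–\eqref{eq:eigfun-A*} one obtains a spectral projection $\boldsymbol P_{N_\nu}$ onto the unstable subspace, decomposing $\Hf=\Hf_+\oplus\Hf_-$ with $\dim\Hf_+<\infty$.

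The key step is \emph{stabilizability of the pair}, which by the Hautus-type criterion for finitely many unstable modes reduces to the finite-dimensional controllability condition $\operatorname{rank}[\,\mu_k^\pm+\nu-\Af_\nu\ \ \Bf\,]$ being full on each unstable eigenspace, equivalently $\Bf^*\xi\ne 0$ for every eigenvector $\xi$ of $\Af_\nu^*$ with eigenvalue in $\sigma_+(\Af_\nu)$. From \eqref{eq:eigfun-A*} each such adjoint eigenfunction has first component a nonzero multiple of $\phi_k$, so $\Bf^*\xi_k^{*\pm}$ is a nonzero multiple of $\chi_{\mathcal O}\phi_k$; since $\phi_k$ is a nonconstant analytic eigenfunction of the Dirichlet Laplacian it cannot vanish on the open set $\mathcal O$, hence $\chi_{\mathcal O}\phi_k\not\equiv 0$, and (after arguing for possibly repeated eigenvalues via unique continuation applied to linear combinations, as in \cite{WKR}) the Hautus condition holds on $\sigma_+(\Af_\nu)$; the stable part needs nothing. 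This is precisely \cite[Proposition 4.1]{WKR}, which applies verbatim because $\Af$ here differs from the operator of \cite{WKR} only by the bounded shift $-\beta\gamma$ inside the first diagonal entry and the scaling $\kappa$, neither of which affects the eigenfunctions or the argument. Granting stabilizability, parts (a)–(c) follow from the standard LQR theorem (e.g. \cite[Theorem 15.2.1]{Tucs} or \cite[Part IV]{BDDM}): the algebraic Riccati equation \eqref{eqn:ARE} has a unique nonnegative self-adjoint solution $\Pf\in\mathcal L(\Hf)$; the feedback $u^\sharp=-\Bf^*\Pf\Wf^\sharp$ is optimal for \eqref{eqoptinf_vort} with value $(\Pf\Wf_0,\Wf_0)$; and $\Af_{\nu,\Pf}=\Af_\nu-\Bf\Bf^*\Pf$, a bounded perturbation of the analytic generator $\Af_\nu$, generates an analytic semigroup whose exponential stability follows from $J(\Wf^\sharp,u^\sharp)<\infty$ together with analyticity (the finite energy of the closed-loop trajectory forces $\Wf^\sharp\in L^2(0,\infty;\Hf)$, and Datko's theorem upgrades this to exponential decay $\|e^{t\Af_{\nu,\Pf}}\|_{\mathcal L(\Hf)}\le Me^{-\varsigma t}$).

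The main obstacle is the verification of the Hautus/unique-continuation condition when the Dirichlet eigenvalues $\Lambda_k$ are multiple: one must show that no nontrivial eigenvector $\xi$ of $\Af_\nu^*$ in $\sigma_+(\Af_\nu)$ is annihilated by $\Bf^*$, which amounts to showing that no nontrivial finite linear combination of Dirichlet eigenfunctions sharing a fixed eigenvalue can vanish identically on the open set $\mathcal O$; this is exactly where the analyticity of eigenfunctions and the unique continuation property enter, and it is handled identically to \cite{WKR}. Everything else is a transcription of the abstract Riccati theory, since the boundedness of $\Bf$, the analyticity of the semigroup, and the finiteness of the unstable spectrum are already in hand from Theorems \ref{th:welPosedLinOp} and \ref{th:specAna}. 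Accordingly I would simply cite \cite[Theorem 2.2]{WKR} (equivalently \cite[Theorem 15.2.1]{Tucs}) after noting that its hypotheses are met here via the bounded-perturbation remark, and not reprove the Riccati solvability from scratch.
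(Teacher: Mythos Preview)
Your proposal is correct and follows exactly the paper's approach: the paper does not prove this theorem at all but simply observes that the operator $\Af$ differs from the one in \cite{WKR} only by a bounded perturbation (the $-\beta\gamma I$ term and the scaling $\kappa$), so that the spectral analysis, Hautus verification, and Riccati theory of \cite[Sections 3--4]{WKR} and \cite[Theorem 15.2.1]{Tucs} carry over verbatim, and then states the result as a citation. Your sketch is in fact more detailed than what the paper provides, but the route is identical.
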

\begin{Remark} \label{rem:feedbackop}
In the above theorem, $K=-\Bf^*\Pf$ is referred to as the \emph{feedback operator}. It encompasses both the finite-dimensional and infinite-dimensional cases, as considered in \cite{WKR}. For our subsequent analysis, the distinction between these two cases is not significant, as the analysis remains valid in both scenarios. Therefore, we simply denote $K$ as the feedback operator without specifying its dimensionality.
\end{Remark}
\subsection{Stabilizability of non-linear system}
The non-linear coupled equation \eqref{eq:GBHE-u-coup-Z} can be written as 
\begin{align} \label{eq:GBHE-NL-Op}
    \Wf'(t)= \Af \Wf(t)+ F(\Wf(t))+ \Bf u(t) \ \text{ for all }\ t>0, \quad \Wf(0)=\Wf_0,
\end{align}
where the unbounded operator $\Af$ and the control operator $\Bf$ are the same as \eqref{eqdef:A} and \eqref{eqdef:B}, respectively, and
\begin{align}
    F(\Wf(t)) = \begin{pmatrix} h(y(t)) + g(y(t))  \\ 0 \end{pmatrix},
\end{align}
where $h$ and $g$ are as in \eqref{eq:NL-h-GBHE} and \eqref{eq:NL-g-GBHE}, respectively.  Let us first present the shifted non-linear closed loop system as 
\begin{align} \label{eq:GBHE-NL-ClsLp}
    \wt{\Wf}'(t)= (\Af_\nu + \Bf K) \wt{\Wf}(t)+ \wt{F}(\wt{\Wf}(t)) \ \text{ for all }\ t>0, \quad \wt{\Wf}(0)=\Wf_0,
\end{align}
where 
\begin{align*}
    \wt{y}(t) =e^{\nu t}y(t), \, \wt{z}(t)=e^{\nu t}z(t), \, \wt{\Wf}(t)=e^{\nu t}\Wf(t), \, \wt{F}(\wt{\Wf}(t))= \begin{pmatrix} \wt{h}(\wt{y}(t)) + \wt{g}(\wt{y}(t))  \\ 0 \end{pmatrix},
\end{align*}
with
\begin{align} \label{eqdef:wt-h & wt-g}
\wt{h}(\wt{y}(t)):= \alpha e^{-\nu \delta t} \wt{y}^\delta \nabla \wt{y}\cdot \boldsymbol{1}  \quad\text{and} \quad \wt{g}(\wt{y}(t)):=\beta e^{-2\nu \delta t} \wt{y}^{2\delta+1} -\beta(1+\gamma) e^{-\nu \delta t} \wt{y}^{\delta+1}.
\end{align}
Furthermore, the system \eqref{eq:GBHE-NL-ClsLp} can be rewritten as
\begin{equation} \label{eq:GBHE-Lin-coup-shiftCls}
\left\{
    \begin{aligned}
& \wt{y}_t - \eta \Delta \wt{y} +(\beta\gamma -\nu) \wt{y} - \kappa\Delta \wt{z} = \chi_{\mathcal{O}} K(\wt{y}(t),\wt{z}(t)) + \wt{h}(\wt{y}(t)) + \wt{g}(\wt{y}(t))\ \text{ in }\  \Omega\times (0,\infty), \\
& \wt{z}_t+\lambda \wt{z} - \wt{y} -\nu \wt{z} =0 \ \text{ in }\  \Omega\times (0,\infty),\\
& \wt{y}=0 , \, \& \, \wt{z}=0  \ \text{ on }\  \Gamma\times (0,\infty), \\
        & \wt{y}(0)=y_0, \, \& \, \wt{z}(0)=0 \ \text{ in }\ \Omega.
    \end{aligned}\right.
\end{equation}
In order to prove the stability of the system  \eqref{eq:GBHE-Lin-coup-shiftCls}, we use the Banach fixed point theorem. Here, the approach is the same as in Steps 2 and 3 of the proof of Theorem \ref{th:localStrongsolGBHE}. We just provide the steps with an outline of proof instead of a detailed one. 
\vskip 0.1cm 
\noindent \textbf{Step 1.} We briefly establish the following result:
\begin{Theorem} \label{th:reg-nh-ff-Psi}
	Let $K$ be as obtained  in Theorem \ref{th:stb cnt}. Then for any $\bar f \in L^2(0,\infty; L^2(\Omega))$ and $y_0 \in H^1_0(\Omega),$ the following system
	\begin{equation}  \label{eq:GBHE_FbClsLinShift}
		\left\{
		\begin{aligned}
			& \bar y_t - \eta \Delta \bar y +(\beta\gamma -\nu)\bar y - \kappa\Delta \bar z =\chi_{\mathcal{O}}K(\bar{y},\bar{z})+ \bar f \ \text{ in }\  \Omega\times (0,\infty), \\
			& \bar z_t+(\lambda-\nu)\bar z - \bar y =0 \ \text{ in }\  \Omega\times (0,\infty),\\
			& \bar y=0 , \, \& \,\bar z=0  \ \text{ on }\  \Gamma\times (0,\infty), \\
			& \bar y(0)= y_0, \, \& \, \bar z(0)=0 \ \text{ in }\ \Omega,
		\end{aligned}
		\right. 
	\end{equation}
	has a \emph{unique strong solution} $(\bar y,\bar z)$ such that 
	\begin{align*}
		& \bar y \in \left( L^2(0,\infty; H^2(\Omega)) \cap L^\infty(0,\infty; H^1_0(\Omega)) \cap H^1(0,\infty; L^2(\Omega)) \cap L^{2(\delta+1)}(0,\infty; L^{6(\delta+1)}(\Omega)) \right),\\
		& \bar z \in H^1(0,\infty; H^2(\Omega)\cap H^1_0(\Omega)),
	\end{align*}
	satisfying
	\begin{align*}
		\|\bar y\|_D+\|\bar z\|_{H^1(0,\infty; H^2(\Omega))} \le M_3 \left( \|\bar f\|_{L^2(0,\infty;L^2(\Omega))} + \|y_0\|_{H^1_0(\Omega)} \right),
	\end{align*}
	for some $M_3>0,$ where $D$ is the same as defined in \eqref{eq:D}. 
\end{Theorem}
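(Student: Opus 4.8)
The plan is to realize the system \eqref{eq:GBHE_FbClsLinShift} as the abstract closed-loop equation governed by $\Af_{\nu,\Pf}=\Af_\nu-\Bf\Bf^*\Pf$ with an extra source term $\begin{pmatrix}\bar f\\0\end{pmatrix}$, then upgrade the $C([0,\infty);\Hf)$ regularity coming from Theorem \ref{th:stb cnt}(c) to the stronger $D$-regularity by the same heat-equation trick used in Step 1 of the proof of Theorem \ref{th:localStrongsolGBHE}. First I would write \eqref{eq:GBHE_FbClsLinShift} as $\wt{\Wf}'(t)=\Af_{\nu,\Pf}\wt{\Wf}(t)+\begin{pmatrix}\bar f(t)\\0\end{pmatrix}$, $\wt{\Wf}(0)=\begin{pmatrix}y_0\\0\end{pmatrix}$. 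Since $\Af_{\nu,\Pf}$ generates an exponentially stable analytic semigroup $\{e^{t\Af_{\nu,\Pf}}\}_{t\ge0}$ on $\Hf$ with $\|e^{t\Af_{\nu,\Pf}}\|_{\mathcal{L}(\Hf)}\le Me^{-\varsigma t}$, the variation-of-constants formula gives a unique mild solution $\wt{\Wf}\in C([0,\infty);\Hf)$, and by analyticity plus the exponential decay one gets $\wt{\Wf}\in L^2(0,\infty;\Hf)$ together with $\wt{\Wf}\in L^2(0,\infty;D(\Af_{\nu,\Pf}))$ and $\wt{\Wf}'\in L^2(0,\infty;\Hf)$ by maximal $L^2$-regularity for analytic semigroups (the relevant abstract statement is in \cite[Part II, Ch. 1, Proposition 3.1]{BDDM}, exactly as invoked for Theorem \ref{th:welPosedLinOp}(b)). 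In particular $\bar y,\bar z\in L^2(0,\infty;L^2(\Omega))$, and since $\Bf^*\Pf\in\mathcal{L}(\Hf,L^2(\Omega))$ is bounded, the feedback term $\chi_{\mathcal O}K(\bar y,\bar z)$ also lies in $L^2(0,\infty;L^2(\Omega))$.

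Next I would recover the parabolic regularity of $\bar y$ exactly as in Step 1 of the proof of Theorem \ref{th:localStrongsolGBHE}: set $v=\bar y+\tfrac{\kappa}{\eta}\bar z$, so that $v$ solves a heat equation
\begin{align*}
	v_t-\eta\Delta v = \bar f + \chi_{\mathcal O}K(\bar y,\bar z) + \Big(\tfrac{\kappa}{\eta}-\beta\gamma+\nu\Big)\bar y - \tfrac{\kappa}{\eta}(\lambda-\nu)\bar z =: F\quad\text{in }\Omega\times(0,\infty),
\end{align*}
with $v=0$ on $\Gamma$ and $v(0)=y_0$. Since $F\in L^2(0,\infty;L^2(\Omega))$ and $y_0\in H^1_0(\Omega)$, Proposition \ref{pps:heatStrongL2L6} gives $v\in L^2(0,\infty;H^2(\Omega))\cap L^\infty(0,\infty;H^1_0(\Omega))\cap L^{2(\delta+1)}(0,\infty;L^{6(\delta+1)}(\Omega))$ with $v_t\in L^2(0,\infty;L^2(\Omega))$ and the corresponding estimate in terms of $\|y_0\|_{H^1_0(\Omega)}+\|F\|_{L^2(0,\infty;L^2(\Omega))}$. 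Then $\bar z$ solves the scalar ODE $\bar z_t=-(\lambda-\nu+\tfrac{\kappa}{\eta})\bar z+v$ with $\bar z(0)=0$ (using $\bar y=v-\tfrac{\kappa}{\eta}\bar z$), whose explicit solution $\bar z(\cdot,t)=\int_0^t e^{-(\lambda-\nu+\kappa/\eta)(t-s)}v(\cdot,s)\,ds$ inherits the regularity of $v$, giving $\bar z\in H^1(0,\infty;H^2(\Omega)\cap H^1_0(\Omega))\cap L^{2(\delta+1)}(0,\infty;L^{6(\delta+1)}(\Omega))$; finally $\bar y=v-\tfrac{\kappa}{\eta}\bar z$ lies in $D$. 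Uniqueness is immediate from uniqueness of the mild solution of the abstract equation.

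The quantitative estimate is assembled by chaining: the semigroup bound controls $\|\bar y\|_{L^2(0,\infty;L^2(\Omega))}+\|\bar z\|_{L^2(0,\infty;L^2(\Omega))}$ and hence $\|F\|_{L^2(0,\infty;L^2(\Omega))}$ by $C(\|y_0\|_{H^1_0(\Omega)}+\|\bar f\|_{L^2(0,\infty;L^2(\Omega))})$; Proposition \ref{pps:heatStrongL2L6} then controls $\|v\|_D$ by the same right-hand side; the convolution representation controls $\|\bar z\|_{H^1(0,\infty;H^2(\Omega))}$ by $\|v\|_{L^2(0,\infty;H^2(\Omega))}$; and $\|\bar y\|_D\le\|v\|_D+\tfrac{\kappa}{\eta}\|\bar z\|_D$ closes the loop, yielding the claimed bound with a suitable $M_3>0$. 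The only delicate point — and the step I expect to be the main obstacle — is establishing that $\bar y,\bar z\in L^2(0,\infty;L^2(\Omega))$ (not merely $L^\infty$ or continuity) so that the heat-equation source $F$ is globally square-integrable in time; this is where the \emph{exponential} stability of $\{e^{t\Af_{\nu,\Pf}}\}_{t\ge0}$ from Theorem \ref{th:stb cnt}(c), rather than mere boundedness, is essential, and one must combine it with the analytic smoothing to land $\wt{\Wf}$ in $L^2(0,\infty;D(\Af))$ on the infinite horizon. Everything after that is the routine bootstrap already carried out in Step 1 of Theorem \ref{th:localStrongsolGBHE}.
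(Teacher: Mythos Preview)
Your proposal is correct and follows essentially the same approach as the paper: write the system abstractly as $W'=(\Af_\nu+\Bf K)W+\begin{pmatrix}\bar f\\0\end{pmatrix}$, invoke the exponentially stable analytic semigroup from Theorem~\ref{th:stb cnt}(c) together with \cite[Part II, Ch.~1, Proposition~3.1]{BDDM} to get $W\in L^2(0,\infty;\Hf)\cap C([0,\infty);\Hf)$, then set $p=\bar y+\tfrac{\kappa}{\eta}\bar z$ and bootstrap via Proposition~\ref{pps:heatStrongL2L6} exactly as in Step~1 of Theorem~\ref{th:localStrongsolGBHE}. Your write-up is in fact more explicit than the paper's, which simply points back to that Step~1 after deriving the heat equation for $p$.
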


\begin{proof}
	The system \eqref{eq:GBHE_FbClsLinShift} can be written as 
	\begin{align*}
		W'(t)=(\Af_\nu +\Bf K) W(t) + \begin{pmatrix} \bar f \\ 0 \end{pmatrix} \text{ for all }t>0, \quad W(0)=\begin{pmatrix}  y_0  \\ 0 \end{pmatrix},
	\end{align*}
	where $W(t)=\begin{pmatrix} \bar y(t) \\ \bar z(t) \end{pmatrix}.$ Since $(\Af_\nu +\Bf K)$ generates an analytic semigroup of negative type (see Theorem \ref{th:stb cnt}), $W(0)\in \Hf$ and $\begin{pmatrix} \bar f \\ 0 \end{pmatrix} \in L^2(0,\infty;\Hf)$, by \cite[Part II, Ch. 1, Proposition 3.1]{BDDM}, we have a unique strong solution $W \in L^2(0,\infty; \Hf)\cap C([0,\infty); \Hf])$ satisfying 
	\begin{align*}
		\|W\|_{L^2(0,\infty; \Hf)} + \|W\|_{L^\infty(0,\infty; \Hf)}  \le C (\| y_0\|_{L^2(\Omega)} + \|\bar f\|_{L^2(0,\infty;L^2(\Omega))}),
	\end{align*}
	for some $C>0.$ To prove the regularity, we set $p:=\bar y+\frac{\kappa}{\eta}\bar z,$ and then from \eqref{eq:GBHE_FbClsLinShift}, we obtain 
	\begin{equation*}
		\left\{
	\begin{aligned}
		& p_t -\eta \Delta p = (\nu+\frac{\kappa}{\eta}-\beta\gamma)\bar y + \frac{\kappa}{\eta} (\nu -\lambda)\bar z +\chi_{\mathcal{O}}(\bar y, \bar z) +\bar f\ \text{ in }\  \Omega \times (0,\infty),\\
		& p(x,t)=0 \ \text{ on }\  \Gamma\times (0,\infty), \\
		& p(x,0)= y_0(x) \ \text{ in }\ \Omega.
	\end{aligned}
	\right. 
	\end{equation*}
	Now, rest of the analysis follows by using similar arguments as used in Step 1 of Theorem \ref{th:localStrongsolGBHE}, and hence we skip here. 
\end{proof}

\noindent \textbf{Step 2.} One can easily show that $\wt{h}$ and $\wt{g}$ defined in \eqref{eqdef:wt-h & wt-g} also follow Propositions \ref{pps:GBHE-selfmap-I} - \ref{pps:GBHE-contrac-I}. 
\vskip 0.1cm 
\noindent \textbf{Step 3.} Finally, proceeding as in Step 2 of proof of Theorem \ref{th:localStrongsolGBHE}, we obtain the following stabilizability result.

\begin{Theorem} \label{th:stabcoupl-zero}
	Let $\nu \in (0,\nu_0)$ be arbitrary, where $\nu_0$ be as given in \eqref{eq:nu0}. There exists a linear continuous operator $K \in \mathcal{L}(\Hf, L^2(\Omega))$ and positive constants $\rho_2>0, $ and $\wt{M}>0,$ depending on $\alpha, \beta, \gamma, \lambda, \kappa, \delta$ such that for all $0<\rho \le \rho_2,$ and $y_0 \in H^1_0(\Omega)$ with 
	\begin{align*}
		\|y_0\|_{H^1_0(\Omega)}\le M \rho,
	\end{align*}
	the closed loop system \eqref{eq:GBHE-Lin-coup-shiftCls} admits a \emph{unique strong solution} $(\wt{y},\wt{z})$ satisfying 
	\begin{align*}
		\|\wt{y}\|_{L^2(0,\infty; H^2(\Omega))}^2 & +\|\wt{y}\|_{L^\infty(0,\infty; H^1_0(\Omega))}^{2}+\|\wt{y}\|_{H^1(0,\infty; L^2(\Omega))}^2 \\
		&+ \|\wt{y}\|_{L^{2(\delta+1)}(0,\infty; L^{6(\delta+1)}(\Omega))}^2+\|\wt{z}\|_{H^1(0,\infty; H^2(\Omega))}^2\le 2 \rho^2.
	\end{align*}
	Furthermore, we also have 
	\begin{align*}
		\|\wt{y}(\cdot,t)\|_{H^1_0(\Omega)}+\|\wt{z}(\cdot,t)\|_{H^2(\Omega)} \le C e^{-\nu t} \|y_0\|_{H^1_0(\Omega)}  \text{ for all } t>0,
	\end{align*}
	and for some $C>0.$
\end{Theorem}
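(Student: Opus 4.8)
The plan is to run, in the feedback-controlled setting, the Banach fixed point scheme of Steps~2--3 of the proof of Theorem~\ref{th:localStrongsolGBHE}, with the linear closed-loop regularity estimate of Theorem~\ref{th:reg-nh-ff-Psi} taking over the role of ``Step~1'', and then to extract the pointwise exponential decay from the exponential stability of the closed-loop semigroup recorded in Theorem~\ref{th:stb cnt}(c). Throughout, $K=-\Bf^{*}\Pf$ denotes the feedback operator of Theorem~\ref{th:stb cnt}, so that $K\in\mathcal{L}(\Hf,L^{2}(\Omega))$ and $\Af_{\nu,\Pf}=\Af_{\nu}+\Bf K$ generates the analytic semigroup $\{e^{t\Af_{\nu,\Pf}}\}_{t\ge 0}$ with $\|e^{t\Af_{\nu,\Pf}}\|_{\mathcal{L}(\Hf)}\le Me^{-\varsigma t}$ for some $\varsigma>0$. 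Since $|e^{-\nu\delta t}|\le 1$ and $|e^{-2\nu\delta t}|\le 1$, the damped nonlinearities $\wt h,\wt g$ of \eqref{eqdef:wt-h & wt-g} satisfy the same bounds as $h,g$ in Propositions~\ref{pps:GBHE-selfmap-I}--\ref{pps:GBHE-contrac-I}; this is the observation that makes the whole transcription go through.

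Next I would build the fixed point map on $D_{\rho}=\{\psi\in D:\|\psi\|_{D}\le\rho\}$, with $D$ as in \eqref{eq:D}. For $\psi\in D$, let $(\wt y^{\psi},\wt z^{\psi})$ solve \eqref{eq:GBHE_FbClsLinShift} with source $\bar f=\wt h(\psi)+\wt g(\psi)\in L^{2}(0,\infty;L^{2}(\Omega))$; Theorem~\ref{th:reg-nh-ff-Psi} and Proposition~\ref{pps:GBHE-selfmap-I} then give $\wt y^{\psi}\in D$ with $\|\wt y^{\psi}\|_{D}+\|\wt z^{\psi}\|_{H^{1}(0,\infty;H^{2})}\le M_{3}\bigl(\|y_0\|_{H^1_0(\Omega)}+2M_{1}\|\psi\|_{D}^{\delta+1}+M_{1}\|\psi\|_{D}^{2\delta+1}\bigr)$. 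Set $\mathcal{S}(\psi):=\wt y^{\psi}$. Imposing $\|y_0\|_{H^1_0(\Omega)}\le\wt M\rho$ with $\wt M$ small and taking $\rho$ small enough that $M_{3}M_{1}(2\rho^{\delta}+\rho^{2\delta})\le\tfrac12$ makes $\mathcal{S}$ map $D_{\rho}$ into itself; and for $\psi_{1},\psi_{2}\in D_{\rho}$ the difference $\mathcal{S}(\psi_{1})-\mathcal{S}(\psi_{2})$ solves \eqref{eq:GBHE_FbClsLinShift} with zero datum and source $(\wt h(\psi_{1})-\wt h(\psi_{2}))+(\wt g(\psi_{1})-\wt g(\psi_{2}))$, so Theorem~\ref{th:reg-nh-ff-Psi} with Proposition~\ref{pps:GBHE-contrac-I} yields $\|\mathcal{S}(\psi_{1})-\mathcal{S}(\psi_{2})\|_{D}\le M_{3}M_{2}(2\rho^{2\delta}+5\rho^{\delta})\|\psi_{1}-\psi_{2}\|_{D}$, which is a strict contraction once $\rho\le\rho_{2}$ is chosen small. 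The Banach fixed point theorem then produces the unique fixed point $\wt y\in D_{\rho}$, which together with the associated $\wt z$ is the unique strong solution of \eqref{eq:GBHE-Lin-coup-shiftCls}; squaring the self-map bound (and folding constants into $\wt M$) gives $\|\wt y\|_{L^{2}(0,\infty;H^{2})}^{2}+\|\wt y\|_{L^{\infty}(0,\infty;H^1_0)}^{2}+\|\wt y\|_{H^{1}(0,\infty;L^{2})}^{2}+\|\wt y\|_{L^{2(\delta+1)}(0,\infty;L^{6(\delta+1)})}^{2}+\|\wt z\|_{H^{1}(0,\infty;H^{2})}^{2}\le 2\rho^{2}$.

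For the pointwise decay I would write, with $\wt\Wf=\begin{pmatrix}\wt y\\ \wt z\end{pmatrix}$, the Duhamel formula $\wt\Wf(t)=e^{t\Af_{\nu,\Pf}}\begin{pmatrix}y_0\\0\end{pmatrix}+\int_{0}^{t}e^{(t-s)\Af_{\nu,\Pf}}\wt F(\wt\Wf(s))\,ds$ with $\wt F(\wt\Wf)=\begin{pmatrix}\wt h(\wt y)+\wt g(\wt y)\\0\end{pmatrix}$. Combining $\|e^{t\Af_{\nu,\Pf}}\|_{\mathcal{L}(\Hf)}\le Me^{-\varsigma t}$, the exponential time-weights $e^{-\nu\delta t},e^{-2\nu\delta t}$ ($\delta\ge1$) built into $\wt h,\wt g$, and the a priori bound $\|\wt y\|_{D}\le\rho$ (which controls the polynomial factors of $\wt F$ in $L^{2}(0,\infty;L^{2}(\Omega))$ through Proposition~\ref{pps:GBHE-selfmap-I}) first gives exponential decay of $\|\wt\Wf(t)\|_{\Hf}$. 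To lift this to the $H^1_0(\Omega)\times H^{2}(\Omega)$ norm one splits $\int_{0}^{t}=\int_{0}^{t/2}+\int_{t/2}^{t}$, uses the analytic smoothing of $e^{t\Af_{\nu,\Pf}}$ on the first piece and Theorem~\ref{th:reg-nh-ff-Psi} applied on the time slab $(t/2,\infty)$ on the second, and then invokes the interpolation embeddings $L^{2}(0,\infty;H^{2}(\Omega))\cap H^{1}(0,\infty;L^{2}(\Omega))\hookrightarrow C_{b}([0,\infty);H^1_0(\Omega))$ for $\wt y$ and $H^{1}(0,\infty;H^{2}(\Omega))\hookrightarrow C_{b}([0,\infty);H^{2}(\Omega))$ for $\wt z$; this produces $\|\wt y(\cdot,t)\|_{H^1_0(\Omega)}+\|\wt z(\cdot,t)\|_{H^{2}(\Omega)}\le Ce^{-\nu t}\|y_0\|_{H^1_0(\Omega)}$.

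The routine part is the self-map and contraction bookkeeping, which is a verbatim copy of Step~2 of the proof of Theorem~\ref{th:localStrongsolGBHE} with $C_{1}$ replaced by $M_{3}$. The hard part will be the decay step: one only controls the nonlinear source $\wt F$ in $L^{2}(0,\infty;L^{2}(\Omega))$, whereas the conclusion requires a genuine \emph{pointwise}-in-time exponential rate in the stronger $H^1_0\times H^{2}$ topology. Obtaining it requires carefully balancing the closed-loop decay exponent $\varsigma$ against the exponential weights $e^{-\nu\delta t}$ in $\wt h,\wt g$ and against the parabolic smoothing gain, and verifying that the rate that survives is at least $\nu$; it is precisely the super-linearity of the nonlinearity (so $\delta\ge1$ in those weights) together with the $\nu$-shift that makes the rate $\nu$ attainable.
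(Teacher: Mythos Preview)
Your proposal is correct and follows essentially the same approach as the paper: the paper explicitly organizes the argument into the three steps you describe (Theorem~\ref{th:reg-nh-ff-Psi} as the closed-loop linear regularity, the observation that $\wt h,\wt g$ inherit the bounds of Propositions~\ref{pps:GBHE-selfmap-I}--\ref{pps:GBHE-contrac-I}, and then the Banach fixed point argument transcribed from Step~2 of Theorem~\ref{th:localStrongsolGBHE} with $M_3$ in place of $C_1$), and then simply states the theorem. In fact you go further than the paper, which gives no details at all for the ``Furthermore'' pointwise $H^1_0\times H^2$ decay estimate; your Duhamel-plus-smoothing sketch is a reasonable way to fill that gap.
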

Now, we write the stabilizability result for the integral equation \eqref{eq:GBHE-C0}. 

\begin{Theorem} \label{th:stabIneg-zero}
Let $\nu \in (0,\nu_0)$ be arbitrary, where $\nu_0$ be as given in \eqref{eq:nu0}. There exists a linear continuous operator $\wt{K} \in \mathcal{L}(L^2(0,\infty; L^2(\Omega)), L^2(\Omega))$ and positive constants $\rho_2>0, $ and $\wt{M}>0,$ depending on $\alpha, \beta, \gamma, \lambda, \kappa, \delta$ such that for all $0<\rho \le \rho_2,$ and $y_0 \in H^1_0(\Omega)$ with $
	\|y_0\|_{H^1_0(\Omega)} \le M \rho,$
the closed loop system 
\begin{equation}
	\left\{
	\begin{aligned}
		& y_t - \eta \Delta y +  \alpha y^\delta \sum_{i=1}^d \frac{\partial y}{\partial x_i} - \kappa \int_0^t e^{-\lambda (t-s)}\Delta y(\cdot,s)ds  = \beta y(1-y^\delta)(y^\delta-\gamma)+ \wt{K}y \ \text{ in }\  \Omega\times (0,\infty), \\
		& y=0  \ \text{ on }\  \Gamma\times (0,\infty), \\
		& y(x,0)=y_0 \ \text{ in }\ \Omega,
	\end{aligned}\right.
\end{equation}
admits a \emph{unique strong solution}
\begin{align*}
	y \in L^2(0,\infty; H^2(\Omega))\cap L^\infty(0,\infty; H^1_0(\Omega)) \cap H^1(0,\infty; L^2(\Omega)) \cap L^{2(\delta+1)}(0,\infty; L^{6(\delta+1)}(\Omega)),
\end{align*}
such that 
\begin{align*}
	&\|y(\cdot,t)\|_{H^1_0(\Omega)}\le C e^{-\nu t} \|y_0\|_{H^1_0(\Omega)}  \text{ for all } t>0, \text{ and } \\
	& \|e^{\nu \cdot} y\|_{L^2(0,\infty; H^2(\Omega))}^2 + \|e^{\nu \cdot} y\|_{H^1(0,\infty; L^2(\Omega))}^2 + \|e^{\nu \cdot} y\|_{L^{2(\delta+1)}(0,\infty; L^{6(\delta+1)}(\Omega))}^2 \le \rho^2,
\end{align*}
for some $C>0.$
\end{Theorem}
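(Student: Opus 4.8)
The plan is to transfer the stabilizability result for the coupled system (Theorem \ref{th:stabcoupl-zero}) back to the integral equation by using the dictionary between the two formulations. Recall that the substitution $z(\cdot,t)=\int_0^t e^{-\lambda(t-s)}y(\cdot,s)\,ds$ turns the memory term $\kappa\int_0^t e^{-\lambda(t-s)}\Delta y(\cdot,s)\,ds$ into $\kappa\Delta z$, and it is equivalent to the differential relation $z_t+\lambda z-y=0$, $z(0)=0$; conversely, given a solution $(y,z)$ of the coupled system, the $z$-equation can be solved by the variation of constants formula to recover $z(\cdot,t)=\int_0^te^{-\lambda(t-s)}y(\cdot,s)\,ds$, so the two problems are genuinely equivalent. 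Under this dictionary the feedback operator $K\in\mathcal L(\Hf,L^2(\Omega))$ from Theorem \ref{th:stb cnt} acting on $(\wt y,\wt z)$ becomes an operator $\wt K$ acting on $\wt y$ alone: write $K\binom{\wt y}{\wt z}=K_1\wt y+K_2\wt z$, and since $\wt z(\cdot,t)=\int_0^te^{-(\lambda-\nu)(t-s)}\wt y(\cdot,s)\,ds$, we get $\chi_{\mathcal O}K\binom{\wt y}{\wt z}(t)=\chi_{\mathcal O}\big(K_1\wt y(t)+K_2\int_0^te^{-(\lambda-\nu)(t-s)}\wt y(\cdot,s)\,ds\big)$, which defines a continuous linear map $\wt K\colon L^2(0,\infty;L^2(\Omega))\to L^2(\Omega)$ (at each time $t$ it depends on the history of $\wt y$); undoing the shift $\wt y(t)=e^{\nu t}y(t)$ exactly produces the feedback term $\wt Ky$ appearing in the statement, now in terms of the original (unshifted) variable $y$ and its history.

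First I would make this correspondence precise: define $\wt K$ explicitly as above and check its continuity from the boundedness of $K$ together with the fact that convolution against $e^{-(\lambda-\nu)t}\in L^1(0,\infty)$ (valid since $\nu<\nu_0\le\lambda+\kappa/\eta$, but we only need $\lambda-\nu$ to keep the convolution kernel integrable — if $\lambda-\nu\le0$ one works on finite horizons or absorbs the exponential into the decaying solution, which is not an issue here because we will ultimately only use that $\wt y$ has the prescribed $e^{-\nu t}$-weighted integrability). Then I would invoke Theorem \ref{th:stabcoupl-zero}: for $\|y_0\|_{H^1_0(\Omega)}\le M\rho$ with $0<\rho\le\rho_2$, the closed-loop coupled system \eqref{eq:GBHE-Lin-coup-shiftCls} has a unique strong solution $(\wt y,\wt z)$ with the stated energy bound $\le 2\rho^2$ and the pointwise decay $\|\wt y(\cdot,t)\|_{H^1_0(\Omega)}+\|\wt z(\cdot,t)\|_{H^2(\Omega)}\le Ce^{-\nu t}\|y_0\|_{H^1_0(\Omega)}$. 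Setting $y(\cdot,t)=e^{-\nu t}\wt y(\cdot,t)$ and reconstructing $z$ as above, $y$ solves the integral equation in the statement with control $\wt Ky$, and the decay estimate gives $\|y(\cdot,t)\|_{H^1_0(\Omega)}\le Ce^{-\nu t}\|y_0\|_{H^1_0(\Omega)}$, while the weighted $L^2(0,\infty;\cdot)$ bounds on $e^{\nu\cdot}y=\wt y$ are exactly the remaining claimed estimates (after possibly shrinking $\rho_2$ so that $2\rho^2$ is replaced by $\rho^2$, or by relabeling $\rho$).

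The remaining point is uniqueness of the strong solution of the integral equation itself (not merely of the coupled reformulation): I would argue that any strong solution $y$ of the integral closed-loop equation, when paired with $z(\cdot,t)=\int_0^te^{-\lambda(t-s)}y(\cdot,s)\,ds$, yields a strong solution $(y,z)$ of the coupled closed-loop system with the right regularity (using Proposition \ref{pps:heatStrongL2L6}-type regularity to see $z\in H^1(0,\infty;H^2\cap H^1_0)$ from $y\in L^2(0,\infty;H^2)$), and the shifted pair $(\wt y,\wt z)$ then lies in the ball on which Theorem \ref{th:stabcoupl-zero} asserts uniqueness. I expect the main obstacle to be precisely this bookkeeping around the feedback term — verifying that the history-dependent operator $\wt K$ is well-defined and continuous on the right space, and that the closed-loop integral equation and the closed-loop coupled system have exactly matching solution sets (including regularity classes), so that existence, uniqueness, and both the exponential and the weighted-integrability estimates transfer cleanly. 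The PDE analysis itself is entirely inherited from Theorems \ref{th:stb cnt} and \ref{th:stabcoupl-zero}; no new fixed-point argument is needed beyond what was already carried out for the coupled system.
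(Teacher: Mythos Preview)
Your proposal is correct and follows essentially the same route as the paper: the paper does not give a separate detailed proof of Theorem~\ref{th:stabIneg-zero} but treats it as an immediate consequence of Theorem~\ref{th:stabcoupl-zero} via the equivalence between the integral and coupled formulations, defining the feedback by $\wt K y(\cdot,t)=K\big(y(\cdot,t),\int_0^t e^{-\lambda(t-s)}y(\cdot,s)\,ds\big)$ (cf.\ the remark following Theorem~\ref{thm-main-1} for the analogous non-constant case). Your additional bookkeeping on the continuity of $\wt K$ and on uniqueness is more than the paper provides, and your worry about $\lambda-\nu$ dissolves once you work, as the paper does, with the unshifted feedback and the integrable kernel $e^{-\lambda(t-s)}$.
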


\section{Stabilizability around a non-constant steady state} \label{sec:stabaroundNonzero} In this section, we discuss the stabilizability of generalized Burgers-Huxley equation with memory around a non-constant steady state. Let us consider the following GBHE equation with memory:
{\small 
\begin{equation} \label{eq:GBHENC}
\left\{
    \begin{aligned}
& y_t - \eta \Delta y +  \alpha y^\delta \sum_{i=1}^d \frac{\partial y}{\partial x_i} - \kappa \int_0^t e^{-\lambda (t-s)}\Delta y(\cdot,s)ds + \beta y(y^\delta-1)(y^\delta-\gamma) =f_\infty + u\chi_{\mathcal{O}} \ \text{ in }\  \Omega\times (0,\infty), \\
& y=0  \ \text{ on }\  \Gamma\times (0,\infty), \\
        & y(x,0)=y_0 \ \text{ in }\ \Omega,
    \end{aligned}\right.
\end{equation}}
where $f_\infty \in L^2(\Omega)$ is a given stationary force term. 
Let $y_\infty$ be the solution of the following steady state equation:
\begin{equation} \label{eq:GBHE-ST}
\left\{
    \begin{aligned}
&  - \eta \Delta y_\infty +  \alpha y_\infty^\delta \sum_{i=1}^d \frac{\partial y_\infty}{\partial x_i}   + \beta y_\infty(y_\infty^\delta-1)(y_\infty^\delta-\gamma) - \frac{\kappa}{\lambda}\Delta y_\infty =f_\infty  \ \text{ in }\  \Omega, \\
& y_\infty=0  \ \text{ on }\  \Gamma.
    \end{aligned}\right.
\end{equation}
The existence of a strong solution of the above equation is studied in \cite{MTMAKRicGBHE}, and we have the following result: 

\begin{Theorem}[{\n \cite[Theorems 2.1, 2.3]{MTMAKRicGBHE}}] \label{th:Solv_GBHE-ST}
For given $f_\infty \in H^{-1}(\Omega),$ the system \eqref{eq:GBHE-ST} admits a weak solution $y_\infty \in H^1_0(\Omega) \cap L^{2(\delta+1)}(\Omega).$ Furthermore, if $ f_\infty \in L^2(\Omega),$ the solution $y_\infty \in H^2(\Omega)\cap H^1_0(\Omega) .$ 
\end{Theorem}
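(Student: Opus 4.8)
Since Theorem \ref{th:Solv_GBHE-ST} is quoted from \cite{MTMAKRicGBHE}, I only sketch the argument. First combine the two dissipative terms by setting $\wt\eta := \eta + \kappa/\lambda > 0$, so that \eqref{eq:GBHE-ST} becomes the semilinear elliptic problem
\[
-\wt\eta\,\Delta y_\infty + \alpha\, y_\infty^\delta\,(\nabla y_\infty\cdot\boldsymbol 1) + \beta\big(y_\infty^{2\delta+1} - (1+\gamma)\,y_\infty^{\delta+1} + \gamma\, y_\infty\big) = f_\infty \ \text{ in }\ \Omega, \qquad y_\infty|_\Gamma = 0 .
\]
Its natural solution (and test) space is $V:=H^1_0(\Omega)\cap L^{2(\delta+1)}(\Omega)$, which under the restriction \eqref{eqdef:delta} actually coincides with $H^1_0(\Omega)$ in every admissible case, since $H^1_0(\Omega)\hookrightarrow L^{2(\delta+1)}(\Omega)$ there (cf.\ Lemma \ref{lemPR:SobEmb}). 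I would construct a weak solution by a Faedo--Galerkin scheme on the eigenbasis $\{\phi_k\}$ of $-\Delta$ from \eqref{eq:eigvalLapl}: for each $n$ seek $y_n=\sum_{k=1}^n c_k\phi_k$ solving the projection of the weak formulation onto $\mathrm{span}\{\phi_1,\dots,\phi_n\}$, the solvability of this finite-dimensional nonlinear system following from the standard consequence of Brouwer's fixed point theorem (a continuous $P\colon\mathbb R^n\to\mathbb R^n$ with $P(\xi)\cdot\xi\ge 0$ on a sphere of radius $r$ vanishes somewhere inside that ball).

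The key a priori bound comes from testing with $y_n$, and relies on two structural facts. First, the convection term drops out: $\int_\Omega y^\delta(\nabla y\cdot\boldsymbol 1)\,y\,dx = \tfrac{1}{\delta+2}\int_\Omega \nabla(y^{\delta+2})\cdot\boldsymbol 1\,dx = 0$ by the homogeneous Dirichlet condition --- this is precisely what preserves coercivity in the presence of a genuinely nonlinear first-order term. Second, the leading reaction term is sign-definite, $\int_\Omega y^{2\delta+1}\,y\,dx = \|y\|_{L^{2(\delta+1)}(\Omega)}^{2(\delta+1)}\ge 0$, and $\beta\gamma\|y\|^2\ge 0$, while the remaining contribution $-\beta(1+\gamma)\int_\Omega y^{\delta+1}\,y\,dx$ has exponent $\delta+2<2(\delta+1)$, hence is absorbed by Young's inequality into $\tfrac\beta2\|y\|_{L^{2(\delta+1)}(\Omega)}^{2(\delta+1)}$ plus a constant. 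Consequently $(y_n)$ is bounded in $V$ in terms of $\|f_\infty\|_{H^{-1}(\Omega)}$ and $|\Omega|$ only.

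To pass to the limit, extract $y_n\rightharpoonup y_\infty$ weakly in $H^1_0(\Omega)$ and in $L^{2(\delta+1)}(\Omega)$; then $y_n\to y_\infty$ strongly in $L^2(\Omega)$ by Rellich and, interpolating with the uniform $L^{2(\delta+1)}$-bound, strongly in $L^p(\Omega)$ for every $p<2(\delta+1)$ and a.e.\ in $\Omega$. The powers $y_n^{2\delta+1}$ and $y_n^{\delta+1}$ are bounded in $L^{(2\delta+2)/(2\delta+1)}(\Omega)$ and $L^2(\Omega)$ respectively and converge a.e., so their weak limits are $y_\infty^{2\delta+1}$ and $y_\infty^{\delta+1}$. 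For the convection term, write $(y_n^\delta(\nabla y_n\cdot\boldsymbol 1),\phi_j)=(\nabla y_n,\,y_n^\delta\phi_j)$; since $2\delta<2(\delta+1)$ one has $y_n\to y_\infty$ in $L^{2\delta}(\Omega)$, hence $y_n^\delta\phi_j\to y_\infty^\delta\phi_j$ strongly in $L^2(\Omega)$, which against $\nabla y_n\rightharpoonup\nabla y_\infty$ weakly in $L^2(\Omega)$ gives the desired convergence. Letting $n\to\infty$ in the Galerkin identities for fixed $j$ and using density of $\bigcup_n\mathrm{span}\{\phi_1,\dots,\phi_n\}$ in $V$ shows that $y_\infty\in H^1_0(\Omega)\cap L^{2(\delta+1)}(\Omega)$ solves the equation weakly.

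For the $H^2$-regularity when $f_\infty\in L^2(\Omega)$, rewrite the equation as $-\wt\eta\,\Delta y_\infty = f_\infty - \alpha\, y_\infty^\delta(\nabla y_\infty\cdot\boldsymbol 1) - \beta\big(y_\infty^{2\delta+1}-(1+\gamma)y_\infty^{\delta+1}+\gamma y_\infty\big)=:g$ and show $g\in L^2(\Omega)$ by an elliptic bootstrap: for $d\le 2$ every power of an $H^1_0$-function lies in all $L^q(\Omega)$, so $g\in L^2(\Omega)$ immediately; for $d=3,\ \delta\le 2$ one first sharpens the Galerkin estimate by testing with $|y_n|^{2\delta}y_n$ (exactly as in Step 3 of the proof of Theorem \ref{th:localStrongsolGBHE}), obtaining a uniform bound for $y_n$ in $L^{4\delta+2}(\Omega)$ and for $|y_n|^\delta\nabla y_n$ in $L^2(\Omega)$, whence $g\in L^2(\Omega)$; classical elliptic regularity for the Dirichlet Laplacian on the $C^2$-domain $\Omega$ then yields $y_\infty\in H^2(\Omega)\cap H^1_0(\Omega)$. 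I expect the main obstacle to be the convection term --- recognizing that it vanishes against the test function $y$ (without which coercivity fails), and, in the compactness step, choosing the interpolation exponents so that $y_n^\delta$ converges strongly in a space paired compatibly with the weak $L^2$-convergence of $\nabla y_n$; this is exactly where the dimension restriction \eqref{eqdef:delta} enters, and the borderline case $d=3,\ \delta=2$ of the regularity part (where $y^{5}\notin L^2$ merely from $H^1_0\hookrightarrow L^6$) is the other delicate point, circumvented by the extra $|y_n|^{2\delta}y_n$-estimate.
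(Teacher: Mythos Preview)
The paper does not give a proof of Theorem \ref{th:Solv_GBHE-ST}; it is simply quoted from \cite[Theorems 2.1, 2.3]{MTMAKRicGBHE}. Your sketch is a correct and standard reconstruction of how such results are obtained (Galerkin approximation plus Brouwer, the cancellation $\int_\Omega y^{\delta+1}\nabla y\cdot\boldsymbol 1\,dx=0$ for coercivity, weak/strong compactness to pass to the limit, and an elliptic bootstrap for $H^2$-regularity), and you rightly flag the borderline case $d=3$, $\delta=2$ and handle it with the $|y|^{2\delta}y$-test as in Step~3 of Theorem~\ref{th:localStrongsolGBHE}. Since there is no in-paper proof to compare against, there is nothing further to add.
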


\begin{Remark}
1. For $d\in\{1,2,3\}$, we know that $H^2(\Omega)\cap H^1_0(\Omega)\subset L^p(\Omega)$ for any $p\in[2,\infty)$. 

2.	The uniqueness of a weak solution for the system \eqref{eq:GBHE-ST} follows  under appropriate conditions on $\nu,\kappa,\lambda,\alpha,\beta,\gamma$ and $f_{\infty}$ (\cite[Theorem 2.2, Remarks 1-3]{MTMAKRicGBHE}). As we are searching for  stabilizability around a non-constant steady state, we work with the solution $y_{\infty}$ obtained in Theorem \ref{th:Solv_GBHE-ST}. 
	\end{Remark}

As discussed in Section \ref{sec:contMethod}, establishing the stabilizability of $y$ around the steady state $ y_\infty $ reduces to demonstrating the stabilizability of $ y - y_\infty $ around zero. Therefore, it is natural to introduce the transformation $ w := y - y_\infty $ and analyze the stabilizability of $ w $ around zero, where $ w $ satisfies the following system:
\begin{equation}  \label{eq:GBHE-Lin-w-y_inft}
	\left\{
	\begin{aligned}
		&  w_t -\eta \Delta w  + \alpha \left( (w+y_\infty)^\delta \nabla (w+y_\infty) \cdot \boldsymbol{1} - y_\infty^\delta \nabla y_\infty\cdot \boldsymbol{1} \right)  - \kappa \int_0^t e^{-\lambda(t-s)}\Delta w(\cdot,s) ds \\
		& \quad  + \beta \left( (w+y_\infty) ((w+y_\infty)^\delta -1) ((w+y_\infty)^\delta -\gamma) - y_\infty (y_\infty^\delta -1) (y_\infty^\delta -\gamma)  \right)\\&\quad+ \frac{\kappa}{\lambda}e^{-\lambda t}\Delta y_\infty=u \chi_{\mathcal{O}},\\
		&   w(x,t)=0, \ \text{ on }\   \Gamma\times (0,\infty),\\
		&w(x,0)=y_0(x)-y_\infty(x), \, \text{ for all } \, x\, \ \text{ in }\  \, \Omega.
	\end{aligned}
	\right. 
\end{equation}
Let us first consider the following linear controlled equation:
\begin{equation}  
	\left\{
	\begin{aligned}
		&  w_t -\eta \Delta w +\beta\gamma w  - \kappa \int_0^t e^{-\lambda(t-s)}\Delta w(\cdot,s) ds =u \chi_{\mathcal{O}} \ \text{ in }\   \Omega\times (0,\infty),\\
		&   w(x,t)=0, \text{ for all } \,(x,t)\in \Gamma\times (0,\infty),\\
		&w(x,0)=y_0(x)-y_\infty(x)=:w_0, \, \text{ for all } \, x \, \ \text{ in }\  \, \Omega.
	\end{aligned}
	\right. 
\end{equation}
Then the stabilizability of this heat equation with memory is already established in Theorem \ref{th:stb cnt}. Now, we aim to show that the feedback operator $K$ as in Remark \ref{rem:feedbackop} stabilizes the system \eqref{eq:GBHE-Lin-w-y_inft}. To do so, let us first re-write it in a coupled form as
\begin{equation}  \label{eq:LinearizedCouple-NC}
	\left\{
	\begin{aligned}
		&  w_t -\eta \Delta w  + \alpha \left( (w+y_\infty)^\delta \nabla (w+y_\infty) \cdot \boldsymbol{1} - y_\infty^\delta \nabla y_\infty\cdot \boldsymbol{1}\right)  - \kappa\Delta v \\
		& \quad  + \beta \left( (w+y_\infty) ((w+y_\infty)^\delta -1) ((w+y_\infty)^\delta -\gamma) - y_\infty (y_\infty^\delta -1) (y_\infty^\delta -\gamma)  \right)\\&\quad+ \frac{\kappa}{\lambda}e^{-\lambda t}\Delta y_\infty=u \chi_{\mathcal{O}},\\
		& v_t+\lambda v - w= 0 \ \text{ in }\  \Omega \times (0,\infty), \\
		&   w(x,t)=0, \quad v(x,t)= 0 \,  \text{ for all } \,(x,t)\in \Gamma\times (0,\infty),\\
		&w(x,0)=w_0(x), \quad v(x,0)=0 \text{ for all } x\in \Omega.
	\end{aligned}
	\right.
\end{equation}
Since, our aim is to obtain stabilizability with decay $\nu$ for any $\nu \in (0,\lambda),$ we now consider the following  shifted closed loop system:
\begin{equation}  \label{eq:GBHE-NLClsCoupShifNCST} 
	\left\{
	\begin{aligned}
		&  \wt{w}_t -\eta \Delta \wt{w} -\nu \wt{w}  - \kappa\Delta \wt{v}  \\&= \chi_{\mathcal{O}} K(\wt{w}, \wt{v}) - \alpha \left( e^{-\nu \delta t} (\wt{w}+e^{\nu  t}y_\infty)^\delta \nabla (\wt{w}+e^{\nu t}y_\infty) \cdot \boldsymbol{1} - e^{\nu t}y_\infty^\delta \nabla y_\infty\cdot \boldsymbol{1}\right)  \\
		& \quad  - \beta \Big( (\wt{w}+ e^{\nu t}y_\infty) (e^{-\nu \delta t}(\wt{w} + e^{\nu t}y_\infty)^\delta -1) (e^{-\nu \delta t}(\wt{w}+ e^{\nu  t}y_\infty)^\delta -\gamma) \\
		& \quad  - e^{\nu t}y_\infty (y_\infty^\delta -1) (y_\infty^\delta -\gamma)  \Big) - \frac{\kappa}{\lambda}e^{(\nu-\lambda) t}\Delta y_\infty \ \text{ in }\  \Omega \times (0,\infty),\\
		& \wt{v}_t+\lambda \wt{v} -\nu \wt{v} - \wt{w}= 0 \ \text{ in }\  \Omega \times (0,\infty), \\
		&   \wt{w}(x,t)=0, \quad \wt{v}(x,t)= 0 \,  \text{ for all } \,(x,t)\in \Gamma\times (0,\infty),\\
		& \wt{w}(x,0)=w_0(x), \quad \wt{v}(x,0)=0 \text{ for all } x\in \Omega,
	\end{aligned}
	\right.
\end{equation}
where $\wt{w}(\cdot,t)=e^{\nu t}w(\cdot,t),$  $\wt{v}(\cdot,t)=e^{\nu t}v(\cdot,t),$ and $K$ is the same as in Remark \ref{rem:feedbackop}. Let us define
\begin{align} \label{eq:h1}
	h_1(\wt{w}):= \alpha \left( e^{-\nu \delta t} (\wt{w}+e^{\nu  t}y_\infty)^\delta \nabla (\wt{w}+e^{\nu t}y_\infty) \cdot \boldsymbol{1}  - e^{\nu t}y_\infty^\delta \nabla y_\infty\cdot \boldsymbol{1}	\right),
\end{align}
and 
\begin{equation} \label{eq:g1}
\begin{aligned} 
	g_1(\wt{w}):= & 
	\Big( \beta e^{-2\nu \delta t} (\wt{w}+ e^{\nu t}y_\infty)^{2\delta+1} -\beta (1+\gamma) e^{-\nu \delta t} (\wt{w}+ e^{\nu t}y_\infty)^{\delta+1} \Big) \\
	& \qquad - \Big( \beta e^{\nu t}y_\infty^{2\delta+1} -\beta (1+\gamma) e^{\nu t}y_\infty^{\delta+1} \Big).
\end{aligned}
\end{equation}
Thus, we obtain the following system:
\begin{equation}  \label{eq:ff-nlclslpsys}
	\left\{
	\begin{aligned}
		&  \wt{w}_t -\eta \Delta \wt{w} +(\beta\gamma-\nu) \wt{w}  - \kappa\Delta \wt{v}  = \chi_{\mathcal{O}} K(\wt{w}, \wt{v}) - h_1(\wt{w}) - g_1(\wt{w}) - \frac{\kappa}{\lambda}e^{(\nu-\lambda) t}\Delta y_\infty \ \text{ in }\  \Omega \times (0,\infty),\\
		& \wt{v}_t+\lambda \wt{v} -\nu \wt{v} - \wt{w}= 0 \ \text{ in }\  \Omega \times (0,\infty), \\
		&   \wt{w}(x,t)=0, \quad \wt{v}(x,t)= 0 \,  \text{ for all } \,(x,t)\in \Gamma\times (0,\infty),\\
		& \wt{w}(x,0)=w_0(x), \quad \wt{v}(x,0)=0 \text{ for all } x\in \Omega.
	\end{aligned}
	\right. 
\end{equation}



It will be enough to show the existence of a stable solution of the closed loop non-linear system \eqref{eq:GBHE-NLClsCoupShifNCST}, and we perform that using the Banach fixed point theorem. Before stating and proving our main result of this section, we first show that the non-linear terms in the first equation in \eqref{eq:GBHE-NLClsCoupShifNCST} belongs to the space $L^2(0,\infty;L^2(\Omega))$ and prove the required results to establish the self and contraction maps. 

\medskip
Recall the spaces $D,$ $D_\rho,$ $B,$ and $B_\rho$ defined in \eqref{eq:D} and \eqref{eq:D_rho - B_rho}. Also, recall the non-linear terms defined by $h_1$ and $g_1$ from \eqref{eq:h1} and \eqref{eq:g1}. 

\begin{Proposition} \label{pps:h_1 g_1 L2}
Let $y_\infty \in H^2(\Omega)\cap H^1_0(\Omega)$ be a strong solution of the problem \eqref{eq:GBHE-ST} as discussed in Theorem \ref{th:Solv_GBHE-ST}, and let $D$ be as defined in \eqref{eq:D}. Then for any $\wt{w} \in D,$ the functions $h_1$ and $g_1$ defined in \eqref{eq:h1} and \eqref{eq:g1}, respectively, satisfy
\begin{align}
&\|h_1(\wt{w})\|_{L^2(0,\infty; L^2(\Omega))} \le M_4 \left(\|\wt{w}\|_D^{\delta+1} + \|y_\infty\|_{H^2(\Omega)} \|\wt{w}\|_D^\delta + \|y_\infty\|_{H^2(\Omega)}^{\delta}\|\wt{w}\|_D+ \|y_\infty\|_{L^{3\delta}(\Omega)}^{\delta}\|\wt{w}\|_D\right),
\label{eqn-f1}\\&\|g_1(\wt{w})\|_{L^2(0,\infty; L^2(\Omega))} \le M_4 \left( \|\wt{w}\|_D^{2\delta+1} + \|y_\infty\|_{L^{4\delta}(\Omega)}^{2\delta}\|\wt{w}\|_D+\|\wt{w}\|_D^{\delta+1}+\|y_\infty\|_{L^{2\delta}(\Omega)}^\delta \|\wt{w}\|_D \right),\label{eqn-f2}
\end{align}
for some constant $M_4>0.$
\end{Proposition}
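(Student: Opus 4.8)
The plan is to reduce both estimates to the kind of H\"older--Sobolev bounds already carried out in Propositions \ref{pps:GBHE-selfmap-I} and \ref{pps:GBHE-contrac-I}, after first performing an algebraic expansion that exhibits the cancellation of the pure steady-state contribution. Set $\Phi:=\wt w+e^{\nu t}y_\infty$, so that
\[
h_1(\wt w)=\alpha\Big(e^{-\nu\delta t}\Phi^\delta\,\nabla\Phi\cdot\boldsymbol 1-e^{\nu t}y_\infty^\delta\,\nabla y_\infty\cdot\boldsymbol 1\Big).
\]
Applying the elementary identity of \eqref{aux_res1} to the map $\psi\mapsto\psi^\delta$ gives $\Phi^\delta-e^{\nu\delta t}y_\infty^\delta=\delta\,\wt w\int_0^1(\theta\wt w+e^{\nu t}y_\infty)^{\delta-1}\,d\theta$, hence $e^{-\nu\delta t}\Phi^\delta=y_\infty^\delta+\delta\,e^{-\nu\delta t}\wt w\int_0^1(\theta\wt w+e^{\nu t}y_\infty)^{\delta-1}\,d\theta$; combined with $\nabla\Phi=\nabla\wt w+e^{\nu t}\nabla y_\infty$ this yields
\[
h_1(\wt w)=\alpha\,y_\infty^\delta\,\nabla\wt w\cdot\boldsymbol 1+\alpha\,\delta\,e^{-\nu\delta t}\wt w\Big(\int_0^1(\theta\wt w+e^{\nu t}y_\infty)^{\delta-1}d\theta\Big)\big(\nabla\wt w+e^{\nu t}\nabla y_\infty\big)\cdot\boldsymbol 1,
\]
the term $\alpha\,e^{\nu t}y_\infty^\delta\nabla y_\infty$ having cancelled exactly. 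The decisive point is that every surviving summand retains at least one factor of $\wt w$: expanding $(\theta\wt w+e^{\nu t}y_\infty)^{\delta-1}$ by the binomial theorem, each monomial is of the form $e^{-c\nu t}\,\wt w^{\,j}\,y_\infty^{\,m}\,\nabla(\cdot)$ with $j\ge1$, $j+m=\delta$ (the degree before the gradient), $\nabla(\cdot)\in\{\nabla\wt w,\nabla y_\infty\}$, and $c\ge0$ because the positive power of $e^{\nu t}$ picked up is at most $\delta-1$ (from the binomial) plus at most one more (from $e^{\nu t}\nabla y_\infty$), while the prefactor carries $e^{-\nu\delta t}$. Thus all the exponentials are bounded on $[0,\infty)$ and contribute only harmless constants.

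It then remains to bound the $L^2(0,\infty;L^2(\Omega))$-norm of each such monomial. When $\nabla(\cdot)=\nabla\wt w$ one splits, as in the proof of Proposition \ref{pps:GBHE-selfmap-I}(a), by the generalized H\"older inequality (Proposition \ref{ppsPR-GenHoldIneq}) into $\|\wt w^{\,j}\|_{L^6}\,\|y_\infty^{\,m}\|_{L^3}\,\|\nabla\wt w\|_{L^6}$ and uses the Sobolev embeddings $H^1_0(\Omega)\hookrightarrow L^6(\Omega)$ and $H^2(\Omega)\hookrightarrow L^\infty(\Omega)$ (valid for $d\le3$), together with $H^2(\Omega)\cap H^1_0(\Omega)\subset L^p(\Omega)$ for every $p<\infty$ (the Remark following Theorem \ref{th:Solv_GBHE-ST}); in the borderline case $d=3$, $\delta=2$ one inserts the Gagliardo--Nirenberg interpolation of Theorem \ref{thm:G-NinBdd} exactly as in the proof of Proposition \ref{pps:GBHE-contrac-I}. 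The monomial $\alpha\,y_\infty^\delta\nabla\wt w$ produces the term $\|y_\infty\|_{L^{3\delta}(\Omega)}^\delta\|\wt w\|_D$, the pure-$\wt w$ monomial produces $\|\wt w\|_D^{\delta+1}$, and every mixed monomial produces a product $\|y_\infty\|_{H^2(\Omega)}^{\,a}\|\wt w\|_D^{\,b}$ with $a+b=\delta+1$ and $b\ge1$; such a product is dominated, by weighted Young's inequality (equivalently, weighted arithmetic--geometric mean), by $\|y_\infty\|_{H^2(\Omega)}^\delta\|\wt w\|_D+\|\wt w\|_D^{\delta+1}$, while the endpoint $a=1$ is precisely the listed term $\|y_\infty\|_{H^2(\Omega)}\|\wt w\|_D^\delta$. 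Collecting the four surviving shapes gives \eqref{eqn-f1}.

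For $g_1$ I would proceed in exactly the same way. Writing $g_1(\wt w)=\beta\big(e^{-2\nu\delta t}\Phi^{2\delta+1}-e^{\nu t}y_\infty^{2\delta+1}\big)-\beta(1+\gamma)\big(e^{-\nu\delta t}\Phi^{\delta+1}-e^{\nu t}y_\infty^{\delta+1}\big)$ and applying \eqref{aux_res1} to the powers $2\delta+1$ and $\delta+1$, one peels off a factor $\wt w$ from each difference; after the binomial expansion the growing exponentials cancel as before, leaving monomials $e^{-c\nu t}\wt w^{\,j}y_\infty^{\,m}$ with $c\ge0$, $j\ge1$ and $j+m\in\{\delta+1,2\delta+1\}$. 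These are estimated by the H\"older--Sobolev argument of Proposition \ref{pps:GBHE-selfmap-I}(b) (with the interpolation step of Proposition \ref{pps:GBHE-contrac-I} when $d=3$, $\delta=2$), producing $\|\wt w\|_D^{2\delta+1}$, $\|\wt w\|_D^{\delta+1}$, $\|y_\infty\|_{L^{4\delta}(\Omega)}^{2\delta}\|\wt w\|_D$, $\|y_\infty\|_{L^{2\delta}(\Omega)}^\delta\|\wt w\|_D$, together with mixed products $\|y_\infty\|_{H^2(\Omega)}^{\,a}\|\wt w\|_D^{\,b}$, $b\ge1$, that are again absorbed by Young's inequality; this yields \eqref{eqn-f2}. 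The only genuinely delicate part of the whole argument is the bookkeeping described above --- verifying that after subtracting the steady-state pieces no summand keeps a positive power of $e^{\nu t}$, equivalently that every term retains a factor of $\wt w$ --- since otherwise the integral over $(0,\infty)$ would diverge; once this is checked, the remaining inequalities are verbatim those of Propositions \ref{pps:GBHE-selfmap-I}--\ref{pps:GBHE-contrac-I}.
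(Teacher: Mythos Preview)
Your proposal is correct and follows essentially the same route as the paper: both exhibit the cancellation of the pure steady-state contribution via the mean-value identity \eqref{aux_res1}/\eqref{aux_res2}, check that no surviving term carries a positive power of $e^{\nu t}$, and finish with the same H\"older--Sobolev bounds used in Propositions \ref{pps:GBHE-selfmap-I}--\ref{pps:GBHE-contrac-I}. The only organizational difference is that the paper bypasses your binomial expansion and Young-absorption step by using the crude pointwise bound $|\theta\wt w+e^{\nu t}y_\infty|^{\delta-1}\le C(|\wt w|^{\delta-1}+e^{(\delta-1)\nu t}|y_\infty|^{\delta-1})$ on the integrand, which lands directly on the four extremal terms in \eqref{eqn-f1}--\eqref{eqn-f2} without ever producing the intermediate monomials.
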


\begin{proof}
Let us first prove \eqref{eqn-f1}.  Note that for any $\wt{w} \in D,$ we have 
\begin{align*}
h_1(\wt{w}) & =\alpha \left( e^{-\nu \delta t} (\wt{w}+e^{\nu  t}y_\infty)^\delta \nabla (\wt{w}+e^{\nu t}y_\infty) \cdot \boldsymbol{1} - e^{\nu t}y_\infty^\delta \nabla y_\infty\cdot \boldsymbol{1}\right)	& \\
& =  \alpha \left( e^{-\nu \delta t} (\wt{w}+e^{\nu  t}y_\infty)^\delta \nabla \wt{w} \cdot \boldsymbol{1} \right) + \alpha \left( e^{-\nu \delta t} (\wt{w}+e^{\nu  t}y_\infty)^\delta e^{\nu t}\nabla y_\infty \cdot \boldsymbol{1} - e^{\nu t}y_\infty^\delta \nabla y_\infty\cdot \boldsymbol{1}\right).
\end{align*}
A use of generalized H\"{o}lder's inequality leads to 
\begin{align*}
 \| e^{-\nu \delta t} (\wt{w} & +e^{\nu  t}y_\infty)^\delta \nabla \wt{w} \cdot \boldsymbol{1}\|_{L^2(0,\infty; L^2(\Omega))}^2 \\
 & = \int_0^\infty \left\| e^{-\nu \delta t}  (\wt{w}(t)+e^{\nu  t}y_\infty)^\delta \nabla \wt{w}(t) \cdot \boldsymbol{1} \right\|^2 dt \\
 & \le C \left( \int_0^\infty \| \wt{w}^\delta(t) \nabla \wt{w}(t) \cdot \boldsymbol{1} \|^2 dt + \int_0^\infty \| y_\infty^\delta \nabla \wt{w}(t) \cdot \boldsymbol{1} \|^2 dt  \right) \\
 & \le C\|\wt{w}\|_{L^\infty(0,\infty; L^{3\delta}(\Omega))}^{2\delta} \|\wt{w}\|_{L^2(0,\infty; H^2(\Omega))}^2 +C \|y_\infty\|_{L^{3\delta}(\Omega)}^{2\delta} \|\wt{w}\|_{L^2(0,\infty; H^2(\Omega))}^2.
\end{align*}
Now, let us set the function $\varphi(w)=(e^{-\nu t}w+y_\infty)^{\delta}$. Then, we have 
\begin{align}\label{aux_res2}
 (e^{-\nu  t}\wt{w}(t)+y_\infty)^\delta - y_\infty^\delta&=\varphi(\wt{w})-\varphi(0)=\int_0^1\frac{d}{d\theta}\varphi(\theta\wt{w})d\theta=\int_0^1\varphi'(\theta\wt{w})\wt{w} \,d\theta\no\\
&=\delta\int_0^1(\theta e^{-\nu t}\wt{w}+y_\infty)^{\delta-1}\wt{w} \, d\theta.
\end{align}
We estimate the second term using \eqref{aux_res2} as follows:
\begin{align*}
 \| e^{-\nu \delta t} & (\wt{w}+e^{\nu  t}y_\infty)^\delta e^{\nu t}\nabla y_\infty \cdot \boldsymbol{1} - e^{\nu t}y_\infty^\delta \nabla y_\infty\cdot \boldsymbol{1} \|_{L^2(0,\infty; L^2(\Omega))}^2  \\
 & = \int_0^\infty    \left\| \left( (e^{-\nu  t}\wt{w}(t)+y_\infty)^\delta - y_\infty^\delta \right) e^{\nu t} \nabla y_\infty\cdot \boldsymbol{1} \right\|^2 dt \\
 & =  \int_0^\infty    \left\|  e^{-\nu\delta t}\wt{w}(t)\left(\int_0^1(\theta\wt{w}(t)+ e^{\nu t}y_\infty)^{\delta-1} d\theta\right)  \nabla y_\infty\cdot \boldsymbol{1} \right\|^2 dt \\
& \le C \int_0^\infty   \left(  \| e^{-\nu\delta t}\wt{w}(t)|\wt{w}(t)|^{\delta-1} \nabla y_\infty\cdot \boldsymbol{1} \|^2 +  \|e^{-\nu t} \wt{w}(t)|y_\infty|^{\delta-1} \nabla y_\infty\cdot \boldsymbol{1} \|^2 \right) dt \\
&  \le C\|y_\infty\|_{H^2(\Omega)}^2 \|\wt{w}\|_{L^{2\delta}(0,\infty; L^{3\delta}(\Omega))}^{2\delta} + C \|y_\infty\|_{H^2(\Omega)}^{2\delta} \|\wt{w}\|_{L^2(0,\infty;H^1(\Omega))}^2,
\end{align*}
which concludes the proof of \eqref{eqn-f1}.

\medskip 
\noindent Let us now prove \eqref{eqn-f2}. First, we observe that 
\begin{align*}
    \frac{g_1(\wt{w})}{\beta} & =  e^{-2\nu \delta t}\left( (\wt{w}+ e^{\nu t}y_\infty)^{2\delta+1} - (e^{\nu  t} y_\infty)^{2\delta+1}\right) -(1+\gamma) e^{-\nu \delta t}\left(  (\wt{w}+ e^{\nu t}y_\infty)^{\delta+1} - (e^{\nu  t} y_\infty)^{\delta+1} \right) .
\end{align*}
The first term can be estimated using a similiar application of \eqref{aux_res2}  as follows:
\begin{align*}
\Big\| e^{-2\nu \delta t} & \Big((\wt{w}+ e^{\nu t}y_\infty)^{2\delta+1} - (e^{\nu  t} y_\infty)^{2\delta+1} \Big)\Big\|_{L^2(0,\infty;L^2(\Omega))}^2 \\
& =  \int_0^\infty \left\|\wt{w}(t) e^{-2\nu \delta t} \left(\int_0^1(\theta\wt{w}(t)+ e^{\nu t}y_\infty)^{2\delta} d\theta\right) \right\|^2dt \\
& \le C \int_0^\infty \|\wt{w}^{\delta+1}(t)\|_{L^6(\Omega)}^2 \|\wt{w}^\delta(t)\|_{L^3(\Omega)}^2 dt + C\int_0^\infty \|\wt{w}(t)\|_{L^\infty(\Omega)}^2\|y_\infty^{2\delta}\|^2 dt \\
 & \le C\|\wt{w}\|_{L^\infty(0,\infty; L^{3\delta}(\Omega))}^{2\delta}  \|\wt{w}\|_{L^{2(\delta+1)}(0,\infty; L^{6(\delta+1)}.(\Omega))}^{2(\delta+1)} + \|y_\infty\|_{L^{4\delta}(\Omega)}^{4\delta}\|\wt{w}\|_{L^2(0,\infty;H^2(\Omega))}^2.
\end{align*}
We estimate the second term in a similar way as
\begin{align*}
\Big\| e^{-\nu \delta t}\Big( (\wt{w} & + e^{\nu t}y_\infty)^{\delta+1} - (e^{\nu  t} y_\infty)^{\delta+1} \Big)\Big\|_{L^2(0,\infty;L^2(\Omega))}^2 \\
& = \int_0^\infty\left \|e^{-\nu \delta t}\wt{w}(t) \left(\int_0^1(\theta\wt{w}(t)+ e^{\nu t}y_\infty)^{\delta} d\theta\right)\right\|^2 dt   \\
& \le C \int_0^\infty \|\wt{w}(t)^{\delta +1}\|^2dt +\int_0^\infty \|\wt{w}(t)y_\infty^\delta\|^2dt\\
& \le \|\wt{w}\|_{L^{2(\delta+1)}(0,\infty; L^{2(\delta+1)}(\Omega))}^{2(\delta+1)} + \|y_\infty\|_{L^{2\delta}(\Omega)}^{2\delta}\|\wt{w}\|_{L^2(0,\infty;H^2(\Omega))}^2.
\end{align*}
Combining these estimates, we conclude the proof of the estimate \eqref{eqn-f2}.
\end{proof}

\begin{Proposition} \label{pps-h1g1-lip}
Let $y_\infty \in H^2(\Omega)\cap H^1_0(\Omega)$ be a strong solution of the problem \eqref{eq:GBHE-ST} as discussed in Theorem \ref{th:Solv_GBHE-ST}, and let $D$ be as defined in \eqref{eq:D}. Then for any $\wt{w}_1, \wt{w}_2 \in D,$ the functions $h_1$ and $g_1$ defined in \eqref{eq:h1} and \eqref{eq:g1}, respectively, satisfy
\begin{align*}
	(a)  \,   \|h_1(\wt{w}_1) - h_1(\wt{w}_2)\|_{L^2(0,\infty; L^2(\Omega))} &\le M_5 \|\wt{w}_1 -\wt{w}_2\|_D \Big( \|\wt{w}_1 \|_D^\delta+\|\wt{w}_2 \|_D^\delta +\|\wt{w}_1\|_D\|\wt{w}_2\|_D^{\delta-1} \\
	& \qquad + (\|\wt{w}_1\|_D^{\delta-1} + \|\wt{w}_2\|_D^{\delta-1})\|y_\infty\|_{H^2(\Omega)} +\|y_\infty\|_{H^2(\Omega)}^\delta \\
	& \qquad + \|\wt{w}_1\|_D \|y_\infty\|_{L^{6(\delta-1)}(\Omega)}^{\delta-1} + \|y_\infty\|_{L^{3\delta}(\Omega)}^{\delta}   \Big), 
\end{align*}
and 
\begin{align*}
	(b)  \,	\|g_1(\wt{w}_1) - g_1(\wt{w}_2)\|_{L^2(0,\infty; L^2(\Omega))} & \le M_5 \|\wt{w}_1-\wt{w}_2\|_D\Big( \|\wt{w}_1\|_D^{2\delta}+ \|\wt{w}_2\|_D^{2\delta} + \|y_\infty\|_{L^{6\delta}(\Omega)}^{2\delta} \\
	& \qquad  + \|\wt{w}_1\|_D^{\delta}+ \|\wt{w}_2\|_D^{\delta} + \|y_\infty\|_{H^2(\Omega)}^{\delta}\Big),
\end{align*}
for some constant $M_5>0.$
\end{Proposition}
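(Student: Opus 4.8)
The plan is to transcribe, with the obvious modifications, the proof of Proposition \ref{pps:GBHE-contrac-I}, exploiting the fact that in both differences $h_1(\wt w_1)-h_1(\wt w_2)$ and $g_1(\wt w_1)-g_1(\wt w_2)$ the pure-$y_\infty$ terms cancel, so that only increments of the power nonlinearities survive, which I then linearize in $\wt w_1-\wt w_2$ via a mean-value identity of the type \eqref{aux_res1}--\eqref{aux_res2}.

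For part (a), I would first write
\[
h_1(\wt w_1)-h_1(\wt w_2)=\alpha e^{-\nu\delta t}\Big[\big((\wt w_1+e^{\nu t}y_\infty)^\delta-(\wt w_2+e^{\nu t}y_\infty)^\delta\big)\nabla(\wt w_1+e^{\nu t}y_\infty)+(\wt w_2+e^{\nu t}y_\infty)^\delta\nabla(\wt w_1-\wt w_2)\Big]\cdot\boldsymbol{1}.
\]
For the first bracketed term I would use
\[
(\wt w_1+e^{\nu t}y_\infty)^\delta-(\wt w_2+e^{\nu t}y_\infty)^\delta=\delta(\wt w_1-\wt w_2)\int_0^1\big(\theta\wt w_1+(1-\theta)\wt w_2+e^{\nu t}y_\infty\big)^{\delta-1}\,d\theta,
\]
bound the integrand pointwise by $|\wt w_1|^{\delta-1}+|\wt w_2|^{\delta-1}+e^{\nu(\delta-1)t}|y_\infty|^{\delta-1}$, and observe that the surviving exponential factors $e^{-\nu\delta t}$, $e^{-\nu\delta t}e^{\nu(\delta-1)t}=e^{-\nu t}$ are all $\le 1$ since $\nu>0$. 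A triple application of the generalized H\"older inequality (Proposition \ref{ppsPR-GenHoldIneq}) with exponents $(6,6,6)$, together with the embeddings $H^1_0(\Omega)\hookrightarrow L^6(\Omega)$ and $H^2(\Omega)\hookrightarrow W^{1,6}(\Omega)$ valid for $d\le 3$ (Lemma \ref{lemPR:SobEmb}), then yields a bound by
$\|\wt w_1-\wt w_2\|_{L^\infty(0,\infty;H^1_0)}\big(\|\wt w_1\|_D^{\delta-1}+\|\wt w_2\|_D^{\delta-1}+\|y_\infty\|_{L^{6(\delta-1)}}^{\delta-1}\big)\|\wt w_1\|_{L^2(0,\infty;H^2)}$.
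For the second term I would expand $(\wt w_2+e^{\nu t}y_\infty)^\delta$ binomially, absorb each exponential weight $e^{-\nu\delta t}e^{\nu k t}=e^{-\nu(\delta-k)t}\le 1$, and estimate the generic summand $\wt w_2^{\delta-k}y_\infty^k\,\nabla(\wt w_1-\wt w_2)$ by the same H\"older/Sobolev argument, obtaining a bound involving $\|\wt w_2\|_D^\delta$ (from $k=0$), $\|y_\infty\|_{H^2}^\delta$ and $\|y_\infty\|_{L^{3\delta}}^\delta$ (from $k=\delta$), and mixed terms absorbed via Young's inequality \eqref{eqPR-YoungIneq} into the stated right-hand side, all multiplied by $\|\wt w_1-\wt w_2\|_{L^2(0,\infty;H^2)}\le\|\wt w_1-\wt w_2\|_D$. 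Collecting the two contributions gives (a).

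For part (b), the pure-$y_\infty$ pieces again cancel, leaving
\[
g_1(\wt w_1)-g_1(\wt w_2)=\beta e^{-2\nu\delta t}\big[(\wt w_1+e^{\nu t}y_\infty)^{2\delta+1}-(\wt w_2+e^{\nu t}y_\infty)^{2\delta+1}\big]-\beta(1+\gamma)e^{-\nu\delta t}\big[(\wt w_1+e^{\nu t}y_\infty)^{\delta+1}-(\wt w_2+e^{\nu t}y_\infty)^{\delta+1}\big].
\]
To each difference I apply the mean-value identity, e.g.
\[
(\wt w_1+e^{\nu t}y_\infty)^{2\delta+1}-(\wt w_2+e^{\nu t}y_\infty)^{2\delta+1}=(2\delta+1)(\wt w_1-\wt w_2)\int_0^1\big(\theta\wt w_1+(1-\theta)\wt w_2+e^{\nu t}y_\infty\big)^{2\delta}\,d\theta,
\]
bound the integrand by $|\wt w_1|^{2\delta}+|\wt w_2|^{2\delta}+e^{2\nu\delta t}|y_\infty|^{2\delta}$, absorb $e^{-2\nu\delta t}e^{2\nu\delta t}=1$, and use H\"older with exponents $(6,3)$: the $\wt w_i$-contributions give $\|\wt w_1-\wt w_2\|_{L^6}\,\|\wt w_i\|_{L^{6\delta}}^{2\delta}$, handled exactly as in Proposition \ref{pps:GBHE-contrac-I}(b) — directly by Sobolev when $\delta=1$ or $d=2$, and by the interpolation (Gagliardo--Nirenberg, Theorem \ref{thm:G-NinBdd}) $\|\cdot\|_{L^{6\delta}}^{4\delta}\lesssim\|\cdot\|_{L^{6(\delta-1)}}^{2(\delta-1)}\|\cdot\|_{L^{6(\delta+1)}}^{2(\delta+1)}$ together with $H^1_0\hookrightarrow L^{6(\delta-1)}$ when $\delta=2$, $d=3$, so that $\int_0^\infty\|\wt w_i\|_{L^{6\delta}}^{4\delta}\lesssim\|\wt w_i\|_D^{4\delta}$ — while the $y_\infty$-contribution uses $y_\infty\in H^2(\Omega)\subset L^p(\Omega)$ for every $p<\infty$ (Remark after Theorem \ref{th:Solv_GBHE-ST}) to bound it by $\|y_\infty\|_{L^{6\delta}}^{2\delta}\,\|\wt w_1-\wt w_2\|_{L^2(0,\infty;L^6)}$. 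The $(\delta+1)$-power difference is estimated in the same way with lower exponents, contributing the $\|\wt w_i\|_D^\delta$ and $\|y_\infty\|_{H^2}^\delta$ terms. Summing all pieces and bounding $\|\wt w_1-\wt w_2\|_{L^2(0,\infty;L^6)}\le C\|\wt w_1-\wt w_2\|_D$ gives (b). The only genuinely delicate point is the three-dimensional endpoint $\delta=2$, where the naive Sobolev chain fails and one must invoke Theorem \ref{thm:G-NinBdd} precisely as in Proposition \ref{pps:GBHE-contrac-I}(b); everything else is routine bookkeeping, in particular checking that every exponential weight produced is $\le 1$ because each factor $e^{\nu t}$ hidden inside $e^{\nu t}y_\infty$ is compensated by a factor of $e^{-\nu\delta t}$ or $e^{-2\nu\delta t}$ already present.
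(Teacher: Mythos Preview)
Your proposal is correct and follows essentially the same route as the paper: the same two-/three-term splitting of $h_1(\wt w_1)-h_1(\wt w_2)$ (the paper further separates your first bracket into the $\nabla\wt w_1$ and $e^{\nu t}\nabla y_\infty$ pieces), the same mean-value linearization, the same $(6,6,6)$ and $(3,6)$ H\"older splits with the Sobolev embeddings $H^1_0\hookrightarrow L^6$, $H^2\hookrightarrow W^{1,6}$, and the identical interpolation fix for $d=3$, $\delta=2$ in part~(b). The only cosmetic difference is that for the factor $(\wt w_2+e^{\nu t}y_\infty)^\delta$ the paper uses $|a+b|^\delta\le C(|a|^\delta+|b|^\delta)$ directly rather than a binomial expansion plus Young, which saves a line but is mathematically equivalent.
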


\begin{proof}
(a) We start with an observation that
\begin{align*}
 h_1(\wt{w}_1) - h_1(\wt{w}_2) & =    \alpha e^{-\nu \delta t} \left(  (\wt{w}_1+e^{\nu  t}y_\infty)^\delta \nabla \wt{w}_1 \cdot \boldsymbol{1} -   (\wt{w}_2+e^{\nu  t}y_\infty)^\delta \nabla \wt{w}_2 \cdot \boldsymbol{1} \right) \\
 & \quad + \alpha e^{-\nu \delta t}\left((\wt{w}_1+e^{\nu  t}y_\infty)^\delta -  (\wt{w}_2+e^{\nu  t}y_\infty)^\delta\right)  e^{\nu t}\nabla y_\infty \cdot \boldsymbol{1}\\
 & =    \alpha e^{-\nu \delta t} \left( \left( (\wt{w}_1+e^{\nu  t}y_\infty)^\delta -   (\wt{w}_2+e^{\nu  t}y_\infty)^\delta \right)\nabla \wt{w}_1 \cdot \boldsymbol{1} \right) \\
 & \quad + \alpha e^{-\nu \delta t} \left(  (\wt{w}_2+e^{\nu  t}y_\infty)^\delta \nabla (\wt{w}_1 -\wt{w}_2) \cdot \boldsymbol{1}  \right) \\
 & \quad + \alpha e^{-\nu \delta t}\left((\wt{w}_1+e^{\nu  t}y_\infty)^\delta -  (\wt{w}_2+e^{\nu  t}y_\infty)^\delta\right)  e^{\nu t}\nabla y_\infty \cdot \boldsymbol{1}.
\end{align*}
To estimate the first term, a repeated application of \eqref{aux_res1} and Cauchy-Schwarz inequality leads to
\begin{align}\label{1st}
 &\| e^{-\nu \delta t} \big( (\wt{w}_1  +e^{\nu  t}y_\infty)^\delta   -   (\wt{w}_2+e^{\nu  t}y_\infty)^\delta \big)\nabla \wt{w}_1 \cdot \boldsymbol{1} \|^2_{L^2(0,\infty; L^2(\Omega))} \no\\
 & =  \int_0^\infty \big\|e^{-2\nu \delta t}(\wt{w}_1 - \wt{w}_2)(t) \Big(\int_0^1\big( \theta(\wt{w}_1(t)+ e^{\nu t}y_\infty)+ (1-\theta)(\wt{w}_2(t)+ e^{\nu t}y_\infty) \big)^{\delta-1} d\theta\Big) \nabla \wt{w}_1(t) \cdot \boldsymbol{1}\big\|^2 dt  \no \\
 & \le C \int_0^\infty  e^{-2\nu \delta t} \|\wt{w}_1(t) - \wt{w}_2(t)\|_{L^6(\Omega)}^2 \||\wt{w}_1(t)|^{\delta-1} + |\wt{w}_2(t)|^{\delta-1} +e^{(\delta-1)\nu t} |y_\infty|^{\delta-1}\|^2_{L^6(\Omega)}\|\nabla \wt{w}_1(t)\|_{L^6(\Omega)}^2 dt\no\\
 & \le C \|\wt{w}_1-\wt{w}_2\|_{L^{\infty}(0,\infty; H^1_0(\Omega))}^2 \Big( \|\wt{w}_1\|_{L^{\infty}(0,\infty; H^1_0(\Omega))}^{2\delta-2} + \|\wt{w}_2\|_{L^{\infty}(0,\infty; H^1_0(\Omega))}^{2\delta-2} \no \\
 & \qquad + \|y_\infty\|_{L^{6(\delta-1)}(\Omega)}^{2\delta-2} \Big) \|\wt{w}_1\|_{L^2(0,\infty; H^2(\Omega))}^2.
\end{align}
For the second term, we follow an analogous path
\begin{align}\label{2nd}
\|e^{-\nu \delta t} & \left(  (\wt{w}_2+e^{\nu  t}y_\infty)^\delta \nabla (\wt{w}_1 -\wt{w}_2) \cdot \boldsymbol{1}  \right)\|^2_{L^2(0,\infty; L^2(\Omega))} \no\\
& = \int_0^\infty \| (e^{-\nu  t}\wt{w}_2(t)+y_\infty)^\delta \nabla (\wt{w}_1 -\wt{w}_2)(t) \cdot \boldsymbol{1}  \|^2 dt \no\\
& \le C\int_0^\infty \| e^{-\nu  t}\wt{w}_2^\delta(t) \nabla (\wt{w}_1 -\wt{w}_2)(t) \cdot \boldsymbol{1}  \|^2 dt + C\int_0^\infty \| y_\infty^\delta \nabla (\wt{w}_1 -\wt{w}_2)(t) \cdot \boldsymbol{1}  \|^2 dt\no \\
& \le C  \int_0^\infty  \| \wt{w}_2^\delta(t)\|_{L^3(\Omega)}^2 \|\nabla (\wt{w}_1(t) -  \wt{w}_2(t))\|_{L^6(\Omega)}^2 dt \no\\
& \qquad +  C  \int_0^\infty  \| y_\infty^\delta\|_{L^3(\Omega)}^2 \|\nabla (\wt{w}_1(t) -  \wt{w}_2(t))\|_{L^6(\Omega)}^2 dt\no\\
 & \le C \left(  \|\wt{w}_2\|_{L^{\infty}(0,\infty; L^{3\delta}(\Omega))}^{2\delta} + \| y_\infty\|_{L^{3\delta}(\Omega)}^{2\delta} \right)\|\wt{w}_1-\wt{w}_2\|_{L^2(0,\infty; H^2(\Omega))}^2.
\end{align}
One can estimate the last term in a similar manner as
\begin{align}\label{3rd}
	&\| e^{-\nu \delta t} \big(  (\wt{w}_1+e^{\nu  t}y_\infty)^\delta   -   (\wt{w}_2+e^{\nu  t}y_\infty)^\delta \big) e^{\nu t}\nabla y_\infty \cdot \boldsymbol{1} \|^2_{L^2(0,\infty; L^2(\Omega))} \no\\
	& =  \int_0^\infty \left\| e^{\nu(1- \delta) t}(\wt{w}_1 - \wt{w}_2)(t) \left(\int_0^1( \theta\wt{w}_1(t)+  (1-\theta)\wt{w}_2(t)+e^{\nu t}y_\infty)^{\delta-1}  d\theta\right) \nabla y_\infty \cdot \boldsymbol{1}\right\|^2 dt \no  \\
	& \le C \int_0^\infty  e^{\nu(1- \delta) t} \|\wt{w}_1(t) - \wt{w}_2(t)\|_{L^6(\Omega)}^2 \||\wt{w}_1(t)|^{\delta-1} + |\wt{w}_2(t)|^{\delta-1} + |e^{\nu t}y_\infty|^{\delta-1}\|^2_{L^6(\Omega)}\|\nabla y_\infty\|_{L^6(\Omega)}^2 dt \no\\
	& \le C \|\wt{w}_1-\wt{w}_2\|_{L^2(0,\infty; H^1_0(\Omega))}^2 \Big( \|\wt{w}_1\|_{L^{\infty}(0,\infty; H^1_0(\Omega))}^{2\delta-2} + \|\wt{w}_2\|_{L^{\infty}(0,\infty; H^1_0(\Omega))}^{2\delta-2} \no \\
	& \qquad + \|y_\infty\|_{L^{6(\delta-1)}(\Omega)}^{2\delta-2} \Big) \|y_\infty\|_{H^2(\Omega)}^2.
 \end{align}
Adding the inequalities obtained in \eqref{1st}, \eqref{2nd} and \eqref{3rd}, we prove (a).

\vskip 0.1cm
\noindent (b) Note that 
\begin{align*}
\frac{1}{\beta} \left( g_1(\wt{w}_1) - g_1(\wt{w}_2) \right) & =  e^{-2\nu \delta t}\left( (\wt{w}_1+ e^{\nu t}y_\infty)^{2\delta+1} - (\wt{w}_2+ e^{\nu t}y_\infty)^{2\delta+1}\right) \\
    & \qquad -(1+\gamma) e^{-\nu \delta t}\left(  (\wt{w}_1+ e^{\nu t}y_\infty)^{\delta+1} - (\wt{w}_2+ e^{\nu t}y_\infty)^{\delta+1} \right).
 \end{align*}
Again using \eqref{aux_res1}, the first term can be estimated as 
\begin{align*}
	\Big\| e^{-2\nu \delta t} & \Big( (\wt{w}_1+ e^{\nu t}y_\infty)^{2\delta+1}  - (\wt{w}_2+ e^{\nu t}y_\infty)^{2\delta+1}\Big)\Big\|^2_{L^2(0,\infty; L^2(\Omega))} \\
	& =  \int_0^\infty \Big\| e^{-2\nu \delta t}(\wt{w}_1-\wt{w}_2)(t) \big(\int_0^1(\theta \wt{w}_1(t)+ (1-\theta) \wt{w}_2(t)+ e^{\nu t}y_\infty)^{2\delta} d\theta\big)\Big\|^2 dt \\
	& \le C \|\wt{w}_1-\wt{w}_2\|_{L^\infty(0,\infty; H^1_0(\Omega))}^2 \left( \int_0^\infty \|\wt{w}_1(t)\|_{L^{6\delta}(\Omega)}^{4\delta} dt + \int_0^\infty \|\wt{w}_2(t)\|_{L^{6\delta}(\Omega)}^{4\delta} dt  \right) \\
	& \qquad + C \|y_\infty\|_{L^{6\delta}(\Omega)}^{4\delta} \|\wt{w}_1-\wt{w}_2\|_{L^2(0,\infty; H^1_0(\Omega))}^2.
\end{align*}
For the case $d=3$, $\delta=1,$ or $d=2,$ the above estimate is enough. However for $\delta= 2$ and $d=3,$  we use interpolation to have
\begin{align*}
	\int_0^\infty  \|\wt{w}_i(t)\|_{ L^{6\delta}(\Omega)}^{4\delta} dt & \le C \int_0^\infty \|\wt{w}_i(t)\|_{ L^{6(\delta-1)}(\Omega)}^{2\delta} \|\wt{w}_i(t)\|_{ L^{6(\delta+1)}(\Omega)}^{2\delta} \\
	& \le C \int_0^\infty \|\wt{w}_i(t)\|_{ L^{6(\delta-1)}(\Omega)}^{2(\delta-1)} \|\wt{w}_i(t)\|_{ L^{6(\delta+1)}(\Omega)}^{2(\delta+1)} \\
	&\le C_s \|\wt{w}_i\|_{L^\infty (0,\infty; H^1_0(\Omega))}^{2(\delta-1)} \|\wt{w}_i\|_{L^{2(\delta+1)} (0,\infty; L^{6(\delta+1)}(\Omega))}^{2(\delta+1)} \quad i=1,2.
\end{align*}
Now to estimate the second term,  we proceed similarly using \eqref{aux_res1} and obtain 
\begin{align*}
	\Big\| e^{-2\nu \delta t} & \Big( (\wt{w}_1+ e^{\nu t}y_\infty)^{\delta+1}  - (\wt{w}_2+ e^{\nu t}y_\infty)^{\delta+1}\Big)\Big\|^2_{L^2(0,\infty; L^2(\Omega))} \\
	& =  \int_0^\infty \left\|(\wt{w}_1-\wt{w}_2)(t) \int_0^1(\theta\wt{w}_1(t)+ (1-\theta) \wt{w}_2(t)+e^{\nu t}y_\infty)^{\delta} d\theta \right\|^2 dt \\
	& \le C \|\wt{w}_1-\wt{w}_2\|_{L^\infty(0,\infty; H^1_0(\Omega))}^2 \left( \int_0^\infty \|\wt{w}_1(t)\|_{L^{3\delta}(\Omega)}^{2\delta} dt + \int_0^\infty \|\wt{w}_2(t)\|_{L^{3\delta}(\Omega)}^{2\delta} dt  \right) \\
	& \qquad + C \|y_\infty\|_{L^{6\delta}(\Omega)}^{2\delta} \|\wt{w}_1-\wt{w}_2\|_{L^2(0,\infty; H^1_0(\Omega))}^2.
\end{align*}
Combining these estimates, we conclude the required result.
\end{proof}

Now, we are ready to state and prove the main stabilization result of the non-linear system around non-constant steady state $y_\infty$ which is a solution of stationary generalized Burgers-Huxley equation. 

\begin{Theorem} \label{th:stabIneg-NC}
Suppose that $\nu \in (0,\lambda)$ be arbitrary, and $w_0\in H^1_0(\Omega)$. Also, suppose that $y_\infty \in H^2(\Omega)\cap H^1_0(\Omega)$ is a solution of the problem \eqref{eq:GBHE-ST}.  Then there exists a feedback operator $K \in \mathcal{L}(L^2(\Omega),\Hf) ,$ $\rho_3>0$ and $\overline{M} >0,$ such that for all $0<\rho<\rho_3,$  satisfying 
\begin{align*}
	\|y_\infty\|_{H^2(\Omega)}\le \overline{M} \rho \text{ and } \|w_0\|_{H^1_0(\Omega)}\le \overline{M} \rho,
\end{align*}
the closed loop system \eqref{eq:ff-nlclslpsys} admits a unique solution 
\begin{align*}
& \wt{w}	\in L^2(0,\infty; H^2(\Omega))\cap L^\infty(0,\infty; H^1_0(\Omega)) \cap L^{2(\delta+1)}(0,\infty; L^{6(\delta+1)}(\Omega)) \cap H^1(0,\infty; L^2(\Omega) ), \\
&  \wt{v} \in H^1(0,\infty; H^2(\Omega)\cap H^1_0(\Omega)),
\end{align*} 
such that 
\begin{align*}
	\|\wt{w}\|_{L^2(0,\infty; H^2(\Omega))}^2 & +\|\wt{w}\|_{L^\infty(0,\infty; H^1_0(\Omega))}^{2}+\|\wt{w}\|_{H^1(0,\infty; L^2(\Omega))}^2 \\
	&+ \|\wt{w}\|_{L^{2(\delta+1)}(0,\infty; L^{6(\delta+1)}(\Omega))}^{2}+\|\wt{v}\|_{H^1(0,\infty; H^2(\Omega))}^2\le 2 \rho^2.
\end{align*}
Furthermore, we also have 
\begin{align*}
	\|\wt{w}(\cdot,t)\|_{H^1_0(\Omega)}+\|\wt{v}(\cdot,t)\|_{H^2(\Omega)} \le C e^{-\nu t} \left( \|y_\infty\|_{H^2(\Omega)}+ \|w_0\|_{H^1_0(\Omega)} \right) \text{ for all } t>0,
\end{align*}
and for some $C>0.$
\end{Theorem}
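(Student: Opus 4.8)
The plan is to treat the closed–loop system \eqref{eq:ff-nlclslpsys} as a fixed point problem for its nonlinear part and apply the Banach contraction principle, exactly in the spirit of Step~2 of the proof of Theorem~\ref{th:localStrongsolGBHE} and of Theorem~\ref{th:stabcoupl-zero}. Fix $\nu\in(0,\lambda)$ and let $K$ be the feedback operator from Remark~\ref{rem:feedbackop} associated with this $\nu$. For $\psi\in D$ (with $D$ as in \eqref{eq:D}) define $\mathcal S(\psi):=w^\psi$, where $(w^\psi,v^\psi)$ solves the linear feedback system
\begin{equation*}
\left\{
\begin{aligned}
& w^\psi_t-\eta\Delta w^\psi+(\beta\gamma-\nu)w^\psi-\kappa\Delta v^\psi=\chi_{\mathcal{O}}K(w^\psi,v^\psi)-h_1(\psi)-g_1(\psi)-\frac{\kappa}{\lambda}e^{(\nu-\lambda)t}\Delta y_\infty,\\
& v^\psi_t+(\lambda-\nu)v^\psi-w^\psi=0 \ \text{ in }\ \Omega\times(0,\infty),\\
& w^\psi=v^\psi=0 \ \text{ on }\ \Gamma\times(0,\infty),\quad w^\psi(0)=w_0,\ v^\psi(0)=0,
\end{aligned}
\right.
\end{equation*}
with $h_1,g_1$ from \eqref{eq:h1}--\eqref{eq:g1}. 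The source on the right lies in $L^2(0,\infty;L^2(\Omega))$: since $\nu<\lambda$ we have $e^{(\nu-\lambda)\cdot}\in L^2(0,\infty)$ and $\Delta y_\infty\in L^2(\Omega)$, so $\|e^{(\nu-\lambda)\cdot}\Delta y_\infty\|_{L^2(0,\infty;L^2(\Omega))}\le C\|y_\infty\|_{H^2(\Omega)}$, while $h_1(\psi),g_1(\psi)\in L^2(0,\infty;L^2(\Omega))$ by Proposition~\ref{pps:h_1 g_1 L2}. Since $\nu\in(0,\lambda)\subset(0,\nu_0)$, the system is of the form covered by Theorem~\ref{th:reg-nh-ff-Psi} (with $\bar f=-h_1(\psi)-g_1(\psi)-\frac{\kappa}{\lambda}e^{(\nu-\lambda)t}\Delta y_\infty$), which gives $w^\psi\in D$, $v^\psi\in H^1(0,\infty;H^2(\Omega)\cap H^1_0(\Omega))$ and
\begin{align*}
\|w^\psi\|_D+\|v^\psi\|_{H^1(0,\infty;H^2(\Omega))} &\le M_3\Big(\|h_1(\psi)\|_{L^2(0,\infty;L^2(\Omega))}+\|g_1(\psi)\|_{L^2(0,\infty;L^2(\Omega))}\\
&\qquad\ +C\|y_\infty\|_{H^2(\Omega)}+\|w_0\|_{H^1_0(\Omega)}\Big),
\end{align*}
so $\mathcal S$ is well defined on $D$.

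\emph{Self-map.} Restrict $\mathcal S$ to the ball $D_\rho$ from \eqref{eq:D_rho - B_rho} and impose $\|y_\infty\|_{H^2(\Omega)}\le\overline M\rho$ and $\|w_0\|_{H^1_0(\Omega)}\le\overline M\rho$. Using Proposition~\ref{pps:h_1 g_1 L2} together with the embeddings $H^2(\Omega)\cap H^1_0(\Omega)\hookrightarrow L^p(\Omega)$ (Remark after Theorem~\ref{th:Solv_GBHE-ST}) to bound every $L^p$-norm of $y_\infty$ by $C\|y_\infty\|_{H^2(\Omega)}$, one checks that each summand in $\|h_1(\psi)\|_{L^2(0,\infty;L^2(\Omega))}+\|g_1(\psi)\|_{L^2(0,\infty;L^2(\Omega))}$ carries at least one factor $\|\psi\|_D\le\rho$ (up to powers of $\overline M\rho$), whence this quantity is $\le C(\rho)\,\rho$ with $C(\rho)\to0$ as $\rho\to0$. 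The displayed estimate then gives $\|w^\psi\|_D\le M_3\rho\big(C(\rho)+(C+1)\overline M\big)$. Choosing first $\overline M$ so small that $M_3(C+1)\overline M\le\frac12$, and then $\rho_3>0$ so small that $M_3C(\rho)\le\frac12$ for $0<\rho<\rho_3$, we obtain $\|w^\psi\|_D\le\rho$; thus $\mathcal S:D_\rho\to D_\rho$.

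\emph{Contraction.} For $\psi_1,\psi_2\in D_\rho$ the difference $w^{\psi_1}-w^{\psi_2}$ solves the same linear feedback system with zero data and source $-(h_1(\psi_1)-h_1(\psi_2))-(g_1(\psi_1)-g_1(\psi_2))$, so Theorem~\ref{th:reg-nh-ff-Psi} and Proposition~\ref{pps-h1g1-lip} yield
\begin{align*}
\|w^{\psi_1}-w^{\psi_2}\|_D &\le M_3\big(\|h_1(\psi_1)-h_1(\psi_2)\|_{L^2(0,\infty;L^2(\Omega))}+\|g_1(\psi_1)-g_1(\psi_2)\|_{L^2(0,\infty;L^2(\Omega))}\big)\\
&\le M_3M_5\,P(\rho,\overline M\rho)\,\|\psi_1-\psi_2\|_D,
\end{align*}
where $P(\rho,\overline M\rho)$ is a polynomial in $\rho$ and $\overline M\rho$ with no constant term, hence $P(\rho,\overline M\rho)\to0$ as $\rho\to0$. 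Shrinking $\rho_3$ once more so that $M_3M_5P(\rho,\overline M\rho)\le\frac23$, the map $\mathcal S$ is a contraction on $D_\rho$, and the Banach fixed point theorem provides a unique $\wt w\in D_\rho$ with $\mathcal S(\wt w)=\wt w$. With the corresponding $\wt v$ (the $v$-component of the linear solve at $\psi=\wt w$, satisfying $\|\wt v\|_{H^1(0,\infty;H^2(\Omega))}\le\rho$), the pair $(\wt w,\wt v)$ is the unique strong solution of \eqref{eq:ff-nlclslpsys} in the stated regularity class, and $\|\wt w\|_D^2+\|\wt v\|_{H^1(0,\infty;H^2(\Omega))}^2\le2\rho^2$.

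\emph{Decay and main obstacle.} Applying the linear estimate to the fixed point itself, and using that (with $\|\wt w\|_D\le\rho$, $\|y_\infty\|_{H^2(\Omega)}\le\overline M\rho$) the nonlinear contribution is bounded by $C(\rho)\|\wt w\|_D$ with $C(\rho)\to0$, one absorbs $\|\wt w\|_D$ into the left-hand side to get $\|\wt w\|_D+\|\wt v\|_{H^1(0,\infty;H^2(\Omega))}\le C\big(\|y_\infty\|_{H^2(\Omega)}+\|w_0\|_{H^1_0(\Omega)}\big)$. Since $D\hookrightarrow L^\infty(0,\infty;H^1_0(\Omega))$ and $H^1(0,\infty;H^2(\Omega))\hookrightarrow L^\infty(0,\infty;H^2(\Omega))$, this bounds $\|\wt w(\cdot,t)\|_{H^1_0(\Omega)}+\|\wt v(\cdot,t)\|_{H^2(\Omega)}$ uniformly in $t$, and recalling $w=e^{-\nu t}\wt w$, $v=e^{-\nu t}\wt v$ yields the asserted exponential-decay estimate. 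Since all the analytic ingredients are in place, the argument presents no essential obstacle: the only point needing care is the simultaneous choice of the constants $M_3,M_4,M_5,\overline M,\rho_3$ so that both the self-map and the contraction requirements hold, and the one genuinely nonroutine estimate — the borderline exponent case $d=3,\ \delta=2$ — is already absorbed into the interpolation arguments inside Propositions~\ref{pps:h_1 g_1 L2} and~\ref{pps-h1g1-lip}.
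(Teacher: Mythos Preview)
Your proof follows essentially the same approach as the paper's: define the solution map on $D_\rho$ via the linear feedback problem (Theorem~\ref{th:reg-nh-ff-Psi}), use Proposition~\ref{pps:h_1 g_1 L2} for the self-map and Proposition~\ref{pps-h1g1-lip} for the contraction, then apply Banach's fixed point theorem. The paper writes out explicit constants $\mathfrak M_1,\mathfrak M_2$ where you use the qualitative observation that the polynomial prefactors vanish as $\rho\to 0$; both arguments are equivalent. Your treatment of the decay step (absorb the nonlinear term, use $D\hookrightarrow L^\infty(0,\infty;H^1_0(\Omega))$) is in fact more detailed than the paper's, which simply stops after the $2\rho^2$ bound; note however that the displayed decay inequality in the statement appears to contain a typo (it should concern $w,v$ rather than $\wt w,\wt v$, as confirmed by Theorem~\ref{thm-main-1}), and your reading of it is the correct one.
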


\begin{proof}
To prove this theorem, we use the Banach fixed point theorem. For a given $\psi\in D,$ we consider the system
\begin{equation}  \label{eq:ff-nh-nlclslpsys}
	\left\{
	\begin{aligned}
	&  \wt{w}_{\psi_t} -\eta \Delta \wt{w}_\psi +(\beta\gamma -\nu) \wt{w}_\psi  - \kappa\Delta \wt{v}_\psi  \\&\quad= \chi_{\mathcal{O}} K(\wt{w}_\psi, \wt{v}_\psi) - h_1(\psi) - g_1(\psi) - \frac{\kappa}{\lambda}e^{(\nu-\lambda) t}\Delta y_\infty \ \text{ in }\  \Omega \times (0,\infty),\\
	& \wt{v}_{\psi_t}+\lambda \wt{v}_\psi -\nu \wt{v}_\psi - \wt{w}_\psi= 0 \ \text{ in }\  \Omega \times (0,\infty), \\
	&   \wt{w}_\psi(x,t)=0, \quad \wt{v}_\psi(x,t)= 0 \,  \text{ for all } \,(x,t)\in \Gamma\times (0,\infty),\\
	& \wt{w}_\psi(x,0)=w_0, \quad \wt{v}_\psi(x,0)=0 \text{ for all } x\in \Omega,
	\end{aligned}
	\right.
\end{equation}
where $h_1$ and $g_1$ are as defined in \eqref{eq:h1} and \eqref{eq:g1}, respectively, and $K$ is as in Remark \ref{rem:feedbackop}. We first show that for any given $\psi \in D,$ the above system has a unique strong solution
\begin{align*}
	& \wt{w}_\psi \in  L^2(0,\infty; H^2(\Omega)) \cap L^\infty(0,\infty; H^1_0(\Omega)) \cap H^1(0,\infty; L^2(\Omega)) \cap L^{2(\delta+1)}(0,\infty; L^{6(\delta+1)}(\Omega)),\\
	& \wt{v}_\psi \in H^1(0,\infty; H^2(\Omega)\cap H^1_0(\Omega)).
\end{align*}
Indeed, this follows from the fact that $h_1(\psi), g_1(\psi)\in L^2(0,\infty; L^2(\Omega))$ (see Proposition \ref{pps:h_1 g_1 L2}), $e^{(\nu-\lambda) t}\Delta y_\infty \in L^2(0,\infty; L^2(\Omega))$ as $\nu<\lambda,$ and then using Theorem \ref{th:reg-nh-ff-Psi}. Moreover, we have
\begin{align*}
&	\|\wt{w}_\psi\|_D+\|\wt{v}_\psi\|_{H^1(0,\infty; H^2(\Omega))} \\& \le M_3 \Big( \|h_1(\psi)\|_{L^2(0,\infty;L^2(\Omega))}+ \|g_1(\psi)\|_{L^2(0,\infty;L^2(\Omega))} + \frac{\kappa}{\lambda}\|y_\infty\|_{H^2(\Omega)}+ \|w_0\|_{H^1_0(\Omega)}  \Big) \\
	& \le M_3 M_4\Big(\|\psi\|_D^{\delta+1} + \|y_\infty\|_{H^2(\Omega)} \|\psi\|_D^\delta + \|y_\infty\|_{H^2(\Omega)}^{\delta}\|\psi\|_D+ \|y_\infty\|_{L^{3\delta}(\Omega)}^{\delta}\|\psi\|_D \\
	&\qquad +  \|\psi\|_D^{2\delta+1} + \|y_\infty\|_{L^{4\delta}(\Omega)}^{2\delta}\|\psi\|_D+\|\psi\|_D^{\delta+1}+\|y_\infty\|_{L^{2\delta}(\Omega)}^\delta \|\psi\|_D \Big)\\
	& \quad + M_3 \Big( \frac{\kappa}{\lambda}\|y_\infty\|_{H^2(\Omega)}  + \|w_0\|_{H^1_0(\Omega)}  \Big)\\
	& \le M_3 M_4\Big(2\|\psi\|_D^{\delta+1} + \|y_\infty\|_{H^2(\Omega)} \|\psi\|_D^\delta + \|\psi\|_D^{2\delta+1} \\
	&\qquad +   (\|y_\infty\|_{H^2(\Omega)}^{\delta}+ \|y_\infty\|_{L^{3\delta}(\Omega)}^{\delta}+\|y_\infty\|_{L^{2\delta}(\Omega)}^\delta)\|\psi\|_D+ \|y_\infty\|_{L^{4\delta}(\Omega)}^{2\delta}\|\psi\|_D \Big)\\
	& \quad + M_3 \Big( \frac{\kappa}{\lambda}\|y_\infty\|_{H^2(\Omega)}  + \|w_0\|_{H^1_0(\Omega)}  \Big). 
\end{align*}
Now, for all 
\begin{align*}
	& \left\lbrace \|y_\infty\|_{H^2(\Omega)}, \|w_0\|_{H^1_0(\Omega)} \right\rbrace \le \mathfrak{M}_1 \rho \, \text{ and }  \rho \le \mathfrak{M}_1, 
\end{align*}
where 
\begin{align*}
	\mathfrak{M}_1 = \min \left\lbrace \frac{1}{(10M_3M_4)^{\frac{1}{\delta}}},\frac{1}{(10M_3M_4)^{\frac{1}{\delta+1}}}, \frac{1}{(10M_3M_4)^{\frac{1}{2\delta}}}, \frac{1}{(10M_3M_4)^{\frac{1}{4\delta}}}, \frac{\lambda}{10M_3\kappa}, \frac{1}{9M_3} \right\rbrace,
\end{align*}
we obtain 
\begin{align*}
	\|\wt{w}_\psi\|_D+\|\wt{v}_\psi\|_{H^1(0,\infty; H^2(\Omega))} & \le M_3M_4 \left( 2\rho^\delta + \mathfrak{M}_1\rho^\delta+\rho^{2\delta} +2\mathfrak{M}_1^\delta \rho^\delta + \mathfrak{M}_1^{2\delta}\rho^{2\delta} \right)\rho + \frac{1}{5}\rho\\
	& \le \rho.
\end{align*}
Let us define the maps $$S_1:D_\rho \to D_\rho \times B_\rho \ \text{ by }\  S_1(\psi)=(\wt{w}_\psi, \wt{v}_\psi)$$  and $$S_2: D_\rho \times B_\rho \to D_\rho\ \text{ by }\ S_2(\wt{w}_\psi, \wt{v}_\psi)=\wt{w}_\psi.$$ Then the map $$S:=S_2 \circ S_1: D_\rho \to D_\rho$$ is well defined for suitable $\rho.$ 
Next, our aim is to show that the map $S$ is a contraction.
For given $\psi_1, \psi_2 \in D,$ let $(\wt{w}_{\psi_1},\wt{v}_{\psi_1})$ and $(\wt{w}_{\psi_2},\wt{v}_{\psi_2})$ be the corresponding solution pair. Thus, $W:=\wt{w}_{\psi_1}-\wt{w}_{\psi_2}$ and $V:=\wt{v}_{\psi_1}- \wt{v}_{\psi_2}$ satisfy
\begin{equation}  \label{eq:ff-nh-nlclslpsys-diff}
\left\{
	\begin{aligned}
		&  W_t -\eta \Delta W +(\beta\gamma -\nu) W  - \kappa\Delta V  = \chi_{\mathcal{O}} K(W, V) - (h_1(\psi_1)-h_1(\psi_2))  \no\\&\qquad- (g_1(\psi_1)-g_1(\psi_2))\ \text{ in }\  \Omega \times (0,\infty),\\
		& V_t+\lambda V -\nu V - W= 0 \text{ in }\  \Omega \times (0,\infty), \\
		&   W(x,t)=0, \quad V(x,t)= 0 \,  \text{ for all } \,(x,t)\in \Gamma\times (0,\infty),\\
		& W(x,0)=0, \quad V(x,0)=0 \text{ for all } x\in \Omega.
	\end{aligned}
\right.
\end{equation}
Again applying Theorem \ref{th:reg-nh-ff-Psi} followed by Proposition \ref{pps-h1g1-lip}, we have
\begin{align*}
&	\|W\|_D+\|V\|_{H^1(0,\infty;H^2(\Omega))}\\ & \le M_3 \left( \|h_1(\psi_1) - h_1(\psi_2)\|_{L^2(0,\infty; L^2(\Omega))} + \|g_1(\psi_1) - g_1(\psi_2)\|_{L^2(0,\infty; L^2(\Omega))} \right) \\
	& \le M_3 M_5 \|\psi_1 -\psi_2\|_D \Big( \|\psi_1 \|_D^\delta+\|\psi_2 \|_D^\delta +\|\psi_1\|_D\|\psi_2\|_D^{\delta-1}+ (\|\psi_1\|_D^{\delta-1} + \|\psi_2\|_D^{\delta-1})\|y_\infty\|_{H^2(\Omega)}  \\
	& \qquad+\|y_\infty\|_{H^2(\Omega)}^\delta + \|\psi_1\|_D \|y_\infty\|_{L^{6(\delta-1)}(\Omega)}^{\delta-1} + \|y_\infty\|_{L^{3\delta}(\Omega)}^{\delta} +\|\psi_1\|_D^{2\delta}+ \|\psi_2\|_D^{2\delta} + \|y_\infty\|_{L^{6\delta}(\Omega)}^{2\delta} \\
	& \qquad  + \|\psi_1\|_D^{\delta}+ \|\psi_2\|_D^{\delta} + \|y_\infty\|_{H^2(\Omega)}^{\delta}  \Big) .
\end{align*}
Now, for all 
\begin{align*}
	& \left\lbrace \|y_\infty\|_{H^2(\Omega)}, \|w_0\|_{H^1_0(\Omega)} \right\rbrace \le \mathfrak{M}_2 \rho  \, \text{ and }  \rho \le \mathfrak{M}_2, 
\end{align*}
where 
\begin{align*}
	\mathfrak{M}_2 = \min \left\lbrace \frac{1}{(15M_3M_5)^{\frac{1}{\delta}}},\frac{1}{(15M_3M_5)^{\frac{1}{\delta+1}}}, \frac{1}{(15M_3M_5)^{\frac{1}{2\delta-1}}}, \frac{1}{(15M_3M_5)^{\frac{1}{2\delta}}}, \frac{1}{(15M_3M_5)^{\frac{1}{4\delta}}}, \right\rbrace,
\end{align*}
we have 
\begin{align*}
&	\|W\|_D+\|V\|_{H^1(0,\infty;H^2(\Omega))} \nonumber\\& \le M_3 \left( \|h_1(\psi_1) - h_1(\psi_2)\|_{L^2(0,\infty; L^2(\Omega))} + \|g_1(\psi_1) - g_1(\psi_2)\|_{L^2(0,\infty; L^2(\Omega))} \right) \\
	& \le  \|\psi_1 -\psi_2\|_D M_3 M_5 \left( 5\rho^\delta + 2\mathfrak{M}_2 \rho^\delta + \mathfrak{M}_2^{\delta-1}\rho^\delta + 2\mathfrak{M}_2^\delta \rho^\delta  + 2 \rho^{2\delta}+ \mathfrak{M}_2^{2\delta}\rho^{2\delta}\right)\\
	& \le \frac{13}{15} \|\psi_1 -\psi_2\|_D.
\end{align*}
	Thus, by choosing 
	\begin{align*}
	& \rho_3=\overline{M}=\min \left\lbrace \mathfrak{M}_1, \mathfrak{M}_2 \right\rbrace,
	\end{align*}
		 we obtain that $S$ is a contraction and self map on $D_\rho$ for all $0<\rho\le \rho_3.$ Hence, by using the Banach fixed point theorem, we have a fixed point of $S$ and hence  a unique solution $(\wt{w}, \wt{v})$ of \eqref{eq:ff-nlclslpsys} satisfying 
	\begin{align*}
		\|\wt{w}\|_{L^2(0,\infty;H^2(\Omega))}^2 & +\|\wt{w}\|_{L^\infty(0,\infty;H^1_0(\Omega))}^{2}+\|\wt{w}\|_{H^1(0,\infty;L^2(\Omega))}^2 \\
		& +\|\wt{w}\|_{L^{2(\delta+1)}(0,\infty;L^{6(\delta+1)}(\Omega))}^{2}+\|\wt{v}\|_{H^1(0,\infty;H^2(\Omega))}^2 \le 2\rho^2,
	\end{align*}
	for all $0<\rho\le \rho_3,$ $\|w_0\|_{H^1_0(\Omega)}\le \overline{M} \rho,$ $\|y_\infty\|_{H^2(\Omega)}\le \overline{M} \rho,$ where $\overline{M}$ and $\rho_3$ are as mentioned above. 
\end{proof}

Now, using the above theorem, we conclude the section by stating stabilization result for the integral equation \eqref{eq:GBHENC}. 
\begin{Theorem}\label{thm-main-1}
	Let $\nu \in (0,\lambda)$ be arbitrary. There exist a continuous linear operator $\mathbf{K} \in \mathcal{L} (L^2(0,\infty; L^2(\Omega)), L^2(\Omega)),$ $\rho_3>0,$ $\overline{M}>0$ such that for all $0<\rho<\rho_3,$ $y_\infty \in H^2(\Omega)\cap H^1_0(\Omega)$ solution of \eqref{eq:GBHE-ST}, and $y_0\in H^1_0(\Omega)$ satisfying 
	\begin{align*}
		\|y_\infty\|_{H^2(\Omega)}\le \overline{M} \rho \text{ and } \|y_0-y_\infty\|_{H^1_0(\Omega)}\le \overline{M} \rho,
	\end{align*}
	the closed loop system
	{\small 
		\begin{equation} 
			\left\{
			\begin{aligned}
				& y_t - \eta \Delta y +  \alpha y^\delta \sum_{i=1}^d \frac{\partial y}{\partial x_i} - \kappa \int_0^t e^{-\lambda (t-s)}\Delta y(\cdot,s)ds + \beta y(y^\delta-1)(y^\delta-\gamma) =f_\infty + \mathbf{K}y \ \text{ in }\  \Omega\times (0,\infty), \\
				& y=0  \ \text{ on }\  \Gamma\times (0,\infty), \\
				& y(x,0)=y_0 \ \text{ in }\ \Omega,
			\end{aligned}\right.
	\end{equation}}
admits a unique solution $y$ satisfying
\begin{align*}
	&\|y(\cdot,t)-y_\infty\|_{H^1_0(\Omega)} + \left\| \int_0^t e^{-\lambda(t-s)} (y(\cdot,s) - y_\infty)  \right\|_{H^2(\Omega)} \\
	& \qquad \qquad \le C e^{-\nu t} \left( \|y_\infty\|_{H^2(\Omega)}+ \|y_0-y_\infty\|_{H^1_0(\Omega)} \right) \text{ for all } t>0,
\end{align*}
and for some $C>0.$
\end{Theorem}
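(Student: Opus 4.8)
The plan is to obtain Theorem \ref{thm-main-1} as a direct consequence of Theorem \ref{th:stabIneg-NC}, by undoing the exponential weight, identifying the auxiliary variable with the memory integral, and rewriting the coupled feedback $K$ as a feedback $\mathbf{K}$ acting on the original unknown. All the analytic difficulty — the fixed-point argument, the a priori bound $\le 2\rho^2$, and the pointwise decay of the coupled closed-loop solution — has already been carried out in Theorem \ref{th:stabIneg-NC} and Propositions \ref{pps:h_1 g_1 L2}--\ref{pps-h1g1-lip}; what remains is essentially the bookkeeping that turns the coupled formulation back into the integral one, exactly mirroring the passage from Theorem \ref{th:stabcoupl-zero} to Theorem \ref{th:stabIneg-zero}.

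\textbf{Step 1 (removing the shift).} Given $\nu\in(0,\lambda)$, $y_\infty$ a solution of \eqref{eq:GBHE-ST}, and $y_0\in H^1_0(\Omega)$ with $w_0:=y_0-y_\infty$ satisfying the smallness hypotheses, Theorem \ref{th:stabIneg-NC} furnishes a unique solution $(\wt w,\wt v)$ of the shifted closed-loop system \eqref{eq:ff-nlclslpsys} with the stated regularity and the bound $\le 2\rho^2$. Setting $w(\cdot,t):=e^{-\nu t}\wt w(\cdot,t)$ and $v(\cdot,t):=e^{-\nu t}\wt v(\cdot,t)$ and recalling that \eqref{eq:ff-nlclslpsys} was obtained from the coupled closed-loop system \eqref{eq:LinearizedCouple-NC} precisely by this substitution (with $h_1,g_1$ as in \eqref{eq:h1}--\eqref{eq:g1}), one checks by a direct computation that $(w,v)$ solves \eqref{eq:LinearizedCouple-NC} with $u\chi_{\mathcal O}=\chi_{\mathcal O}K(w,v)$; the regularity of $(\wt w,\wt v)$ and $y_\infty\in H^2(\Omega)\cap H^1_0(\Omega)$ transfer to $w,v$.

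\textbf{Step 2 (identifying the memory term and defining $\mathbf{K}$).} From the second equation of \eqref{eq:LinearizedCouple-NC}, namely $v_t+\lambda v-w=0$ with $v(\cdot,0)=0$, we get $v(\cdot,t)=\int_0^t e^{-\lambda(t-s)}w(\cdot,s)\,ds$. Hence $K(w,v)=K\big(w(\cdot,t),\,e^{-\lambda\cdot}\!*w\big)$ depends linearly and boundedly on $w$ through the operator $w\mapsto(w,\,e^{-\lambda\cdot}\!*w)$, since convolution against the $L^1$ kernel $e^{-\lambda\cdot}$ is bounded on $L^2(0,\infty;L^2(\Omega))$ (Young's inequality for convolutions), and $K\in\mathcal L(\Hf,L^2(\Omega))$ by Remark \ref{rem:feedbackop}. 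Putting $y:=w+y_\infty$, so that $w=y-y_\infty$ and $v=\int_0^t e^{-\lambda(t-s)}(y(\cdot,s)-y_\infty)\,ds$, and comparing \eqref{eq:LinearizedCouple-NC} with \eqref{eq:GBHE-Lin-w-y_inft} and then with \eqref{eq:GBHENC} (using that $y_\infty$ solves \eqref{eq:GBHE-ST} and that $\tfrac{\kappa}{\lambda}e^{-\lambda t}\Delta y_\infty$ is exactly the remainder of the memory term evaluated at the steady state), we conclude that $y$ solves the closed-loop equation in the statement with
\[
\mathbf{K}y:=\chi_{\mathcal O}\,K\!\left(y-y_\infty,\ \int_0^{\cdot}e^{-\lambda(\cdot-s)}(y(\cdot,s)-y_\infty)\,ds\right),
\]
whose linear part belongs to $\mathcal L\big(L^2(0,\infty;L^2(\Omega)),L^2(\Omega)\big)$ by the previous remark. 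Uniqueness of $y$ follows from the uniqueness in Theorem \ref{th:stabIneg-NC}, since the correspondences $(\wt w,\wt v)\leftrightarrow(w,v)\leftrightarrow(y,v)$ are bijective.

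\textbf{Step 3 (decay estimate).} Finally, $\|y(\cdot,t)-y_\infty\|_{H^1_0(\Omega)}=e^{-\nu t}\|\wt w(\cdot,t)\|_{H^1_0(\Omega)}$ and $\big\|\int_0^t e^{-\lambda(t-s)}(y(\cdot,s)-y_\infty)\,ds\big\|_{H^2(\Omega)}=e^{-\nu t}\|\wt v(\cdot,t)\|_{H^2(\Omega)}$, so the pointwise bound of Theorem \ref{th:stabIneg-NC}, after undoing the shift, yields precisely the asserted estimate with the same constants $\rho_3=\overline M$. The only point requiring a little care in this argument is the boundedness claim of Step 2 — that composing the convolution with the (finite- or infinite-dimensional) feedback $K$ produces a bounded operator on the state space so that $\mathbf{K}$ is well defined — but this is immediate from Young's inequality; everything else is routine translation between the two formulations.
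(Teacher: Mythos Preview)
Your approach is essentially the same as the paper's: the paper gives only a one-line proof, stating that the result follows from Theorem \ref{th:stabIneg-NC} by defining
\[
\mathbf{K}y(\cdot,t)=K\!\left(y(\cdot,t),\ \int_0^t e^{-\lambda(t-s)}y(\cdot,s)\,ds\right),
\]
with $K$ the feedback from Remark \ref{rem:feedbackop}; your Steps 1--3 spell out in more detail exactly this translation (undo the shift, identify $v$ with the convolution, read off the decay). The only visible difference is that you insert $y-y_\infty$ rather than $y$ into both slots of $K$, which makes your $\mathbf{K}$ affine rather than linear; the paper's choice keeps $\mathbf{K}$ genuinely linear at the price of an extra (exponentially decaying) discrepancy term $K\big(y_\infty,\tfrac{1-e^{-\lambda t}}{\lambda}y_\infty\big)$ that can be absorbed into the analysis. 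Either way the substance of the argument is identical.
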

The proof follows from Theorem \ref{th:stabIneg-NC} by defining
\begin{align*}
	\mathbf{K}y(\cdot,t)=K \left(y(\cdot,t), \int_0^t e^{-\lambda(t-s)}y(\cdot,s)ds\right),
\end{align*} 
where $K$ is as in Remark \ref{rem:feedbackop}.

\begin{Remark}
	The main difference between Theorems \ref{th:stabIneg-zero} and  \ref{thm-main-1} is the range of $\nu$. In the zero state case, $\nu\in(0,\lambda+\frac{\kappa}{\eta})$ and for the non-stationary case, $\nu\in(0,\lambda)$. 	One can observe that the decay rate in the case of stabilization around the zero steady state is bounded by $ \nu_0 $ (see Theorems \ref{th:stabcoupl-zero} - \ref{th:stabIneg-zero}), which is larger than the bound on the decay rate in the case of stabilization around the non-constant steady state (see Theorem \ref{th:stabIneg-NC}). This discrepancy arises due to the presence of the term $e^{(\nu -\lambda)t}\Delta y_\infty $ in \eqref{eq:LinearizedCouple-NC}. As per our approach, this term must belong to $ L^2(0,\infty; L^2(\Omega)) $, which slightly reduces the achievable decay rate in this case.
\end{Remark}

\section{Numerical simulations} \label{sec:NS}
This section presents numerical computations to validate the theoretical results. We examine an example where the uncontrolled solution is unstable and demonstrate how computing the feedback control stabilizes it. The following two subsections explore both cases: stabilizability around the zero steady state and a non-constant steady state.

\subsection{Simulation for zero steady state case} This subsection is devoted to conduct a numerical example by considering a GBHE equation with memory. The computational domain we consider is $\Omega=(0,1)\times (0,1)$ and this is discretized  using a $P_1$ finite element method on a structured triangular mesh. We consider the example \eqref{eq:GBHE-u-coup-Z} with the following data:
\begin{align*}
	\eta=0.2, \, \alpha=1, \, \delta=1, \, \kappa=1.5, \, \beta=1.5, \,  \gamma = 0.5, \, \lambda=3,
\end{align*}
and $y_0(x)=x_1(1-x_1)x_2(1-x_2).$ Using finite element method, the ODE system corresponding to the semi-discretized system is
\begin{align*}
	E_N\frac{d}{dt}Y_N(t)= A_N Y_N(t)+B_N u_N(t)+F_N(Y_N(t)) \ \text{ in }\  (0,1)\times (0,1), \quad Y_N(0)=Y_{N_0},
\end{align*}
where  is
\begin{align*}
	& E_N=\begin{pmatrix} M_N & 0 \\ 0 & M_N \end{pmatrix}, \quad A_N = \begin{pmatrix} -\eta K_N & -\kappa K_N \\  -\lambda M_N & M_N \end{pmatrix}, \quad Y_N(t)=\begin{pmatrix} y_N(t) \\ z_N(t) \end{pmatrix}, \quad B_N= \begin{pmatrix} M_N \\ 0 \end{pmatrix},
\end{align*} 
with $M_N \in \mathbb{R}^{N\times N}$ is the symmetric mass matrix, $K_N \in \mathbb{R}^{N\times N}$ is the stiffness matrix, $B_N$ is the control matrix, and $F_N$ is matrix corresponding to  the nonlinear terms. Similarly, we first consider the shifted linear ODE as
\begin{align*}
	E_N\frac{d}{dt}\wt{Y}_N(t)= (A_N +\nu I_N) \wt{Y}_N(t)+B_N \wt{u}_N(t) \ \text{ in }\  (0,1)\times (0,1), \quad \wt{Y}_N(0)=Y_{N_0}.
\end{align*}
Before checking stabilizability, we first analyze the eigenvalues of the system. Specifically, we compute the approximate eigenvalues of the shifted principal operator \( A_\nu \) by computing the eigenvalues of $E_N^{-1}(A_N +\nu I_N)$  for the given parameter choice and observe that two of them have positive real parts (see Figure~\ref{fig:combined}(A)), indicating instability in the uncontrolled linear system (see Figure~\ref{fig:combined-LinSol}(A)). Next, we compute the feedback operator by solving corresponding algebraic Riccati equation (in matrix form):
\begin{equation}\label{eq:matrixRiccati}
\begin{aligned}
&	P_N (E_N^{-1}(A_N +\nu I_N)) + (A_N +\nu I_N)^TE_N^{-1}P_N- P_N E_N^{-1}B_NE_N^{-1}B_N^TE_N^{-1}P_N+E_N=0, \\
& P_N^T=P_N\ge 0,
\end{aligned}
\end{equation}
see \cite[(8.5)]{WKRPCE} or \cite[Section 3.4]{WKR}. In this case, we obtain a feedback control as 
\begin{align*}
	\widetilde{u}_N(t)=-E_N^{-1}B_N^TE_N^{-1}P_NY_N(t),
\end{align*}
where $\widetilde{Y}_N(t)$ solves 
\begin{align}\label{eq:NS-closdLinMat}
E_N\frac{d}{dt}\wt{Y}_N(t)= \left((A_N +\nu I_N) -B_N S_NP_N \right) \wt{Y}_N(t)\ \text{ in }\  (0,1)\times (0,1), \quad \wt{Y}_N(0)=Y_{N_0}.
\end{align}
Here, $S_N$ can be computed from 
\begin{align*}
	E_N S_N=\widehat{B}_N^T \text{ and } \widehat{B}_N \text{ solves } E_N \widehat{B}_N=B_N.
\end{align*}
Next, we compute the approximated eigenvalues of the perturbed operator \( A + BK \), where the perturbation is introduced by the feedback operator \( K \). Specifically, we compute the eigenvalues of \( E_N^{-1} \left( (A_N + \nu I_N) - B_N S_N P_N \right) \) and observe that all eigenvalues have negative real parts, as shown in Figure~\ref{fig:combined}(B). This indicates that the system exhibits a stable solution. Furthermore, we compute the solution of \eqref{eq:NS-closdLinMat} and plot its energy over time, as depicted in Figure~\ref{fig:combined-LinSol}(B). The energy is observed to decrease over time, confirming that the solution is stabilized. Notably, the previously unstable eigenvalues shift into the negative half-plane after applying feedback control, as shown in Figure~\ref{fig:combined}(B), further demonstrating the effectiveness of the control strategy.

\begin{figure}[h]
	\centering
	\begin{subfigure}{0.48\textwidth}
		\centering
		\includegraphics[width=\textwidth]{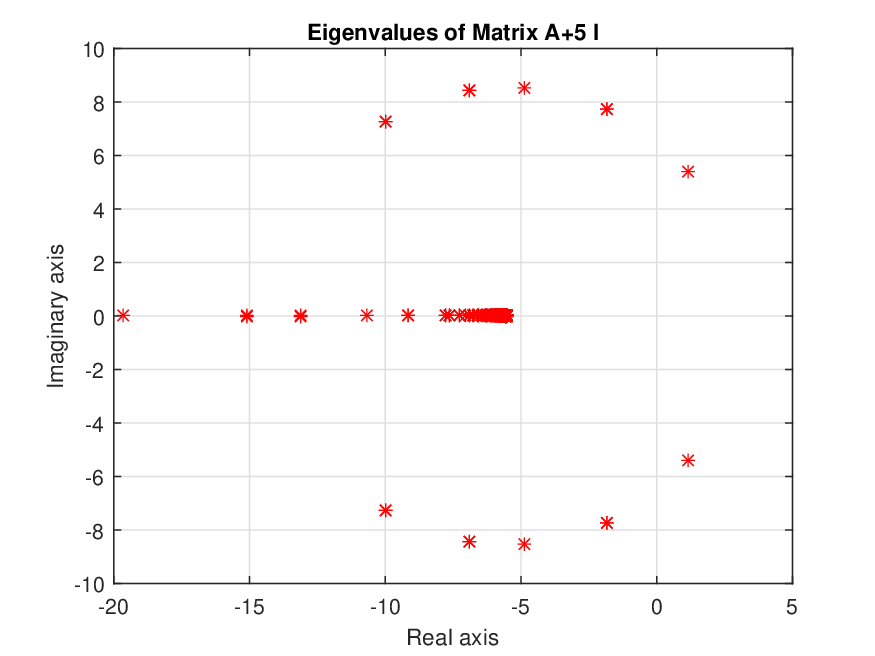}  
		\caption{Before stabilization}
		\label{fig:eigBS}
	\end{subfigure}
	\hfill
	\begin{subfigure}{0.48\textwidth}
		\centering
		\includegraphics[width=\textwidth]{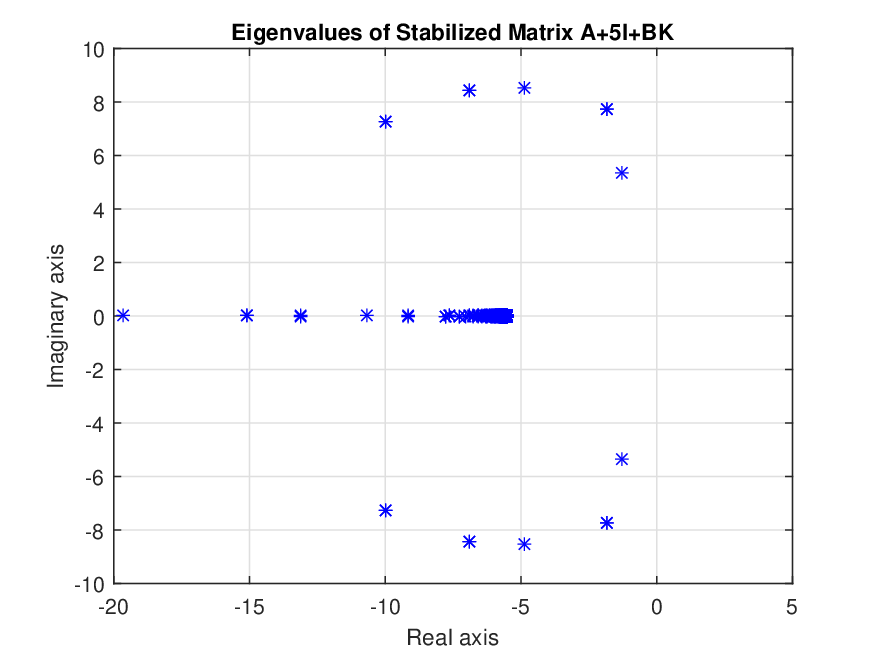}  
		\caption{After stabilization}
		\label{fig:eigAS}
	\end{subfigure}
	
	\caption{Eigenvalues of the shifted principal operator $A+\nu I$ and after stabilization $A+\nu I +BK$}
	\label{fig:combined}
\end{figure}

\begin{figure}[h]
	\centering
	\begin{subfigure}{0.48\textwidth}
		\centering
		\includegraphics[width=\textwidth]{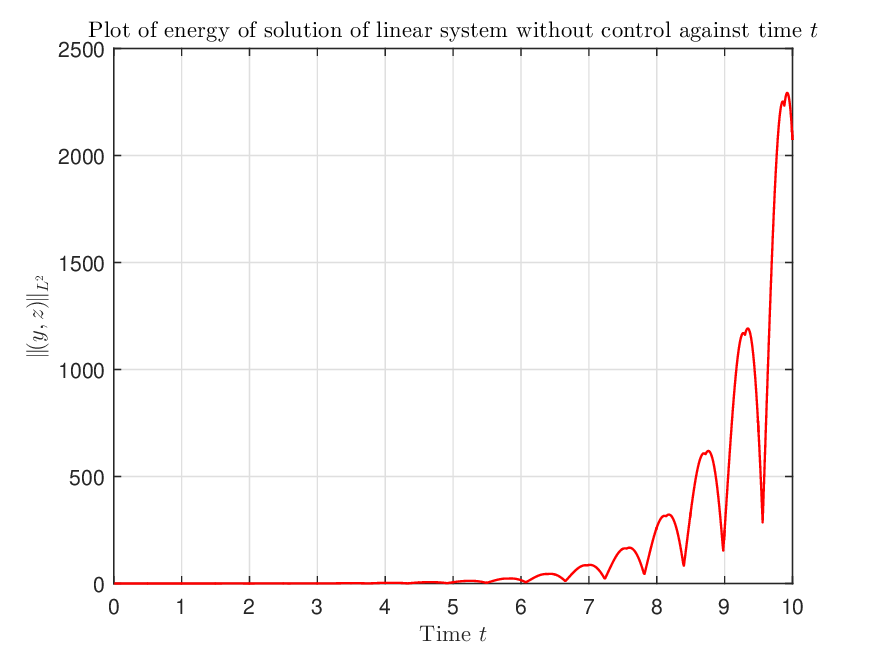}  
		\caption{Before stabilization}
		\label{fig:LinSolBS}
	\end{subfigure}
	\hfill
	\begin{subfigure}{0.48\textwidth}
		\centering
		\includegraphics[width=\textwidth]{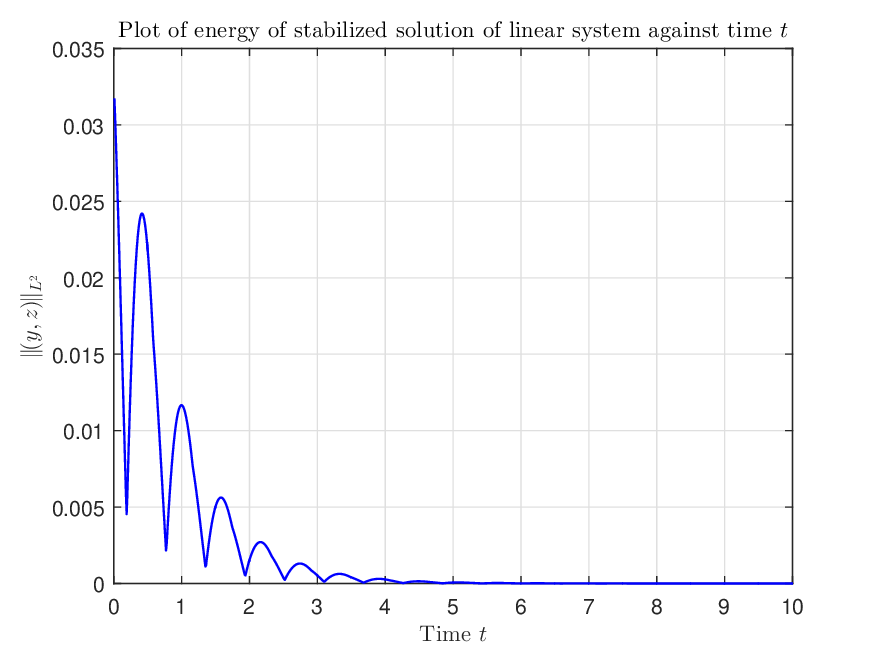}  
		\caption{After stabilization}
		\label{fig:LinStabSolAS}
	\end{subfigure}
	
	\caption{Plot of norm of solution of linear system before and after stabilization}
	\label{fig:combined-LinSol}
\end{figure}

\begin{figure}[h]
	\centering
	\begin{subfigure}{0.48\textwidth}
		\centering
		\includegraphics[width=\textwidth]{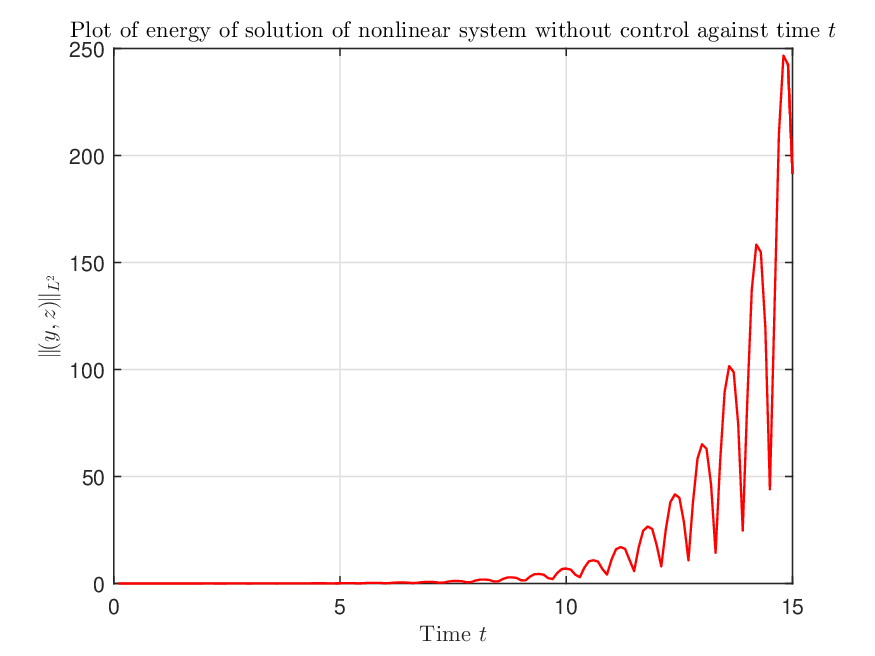}  
		\caption{Before stabilization}
		\label{fig:NonLinSolBS}
	\end{subfigure}
	\hfill
	\begin{subfigure}{0.48\textwidth}
		\centering
		\includegraphics[width=\textwidth]{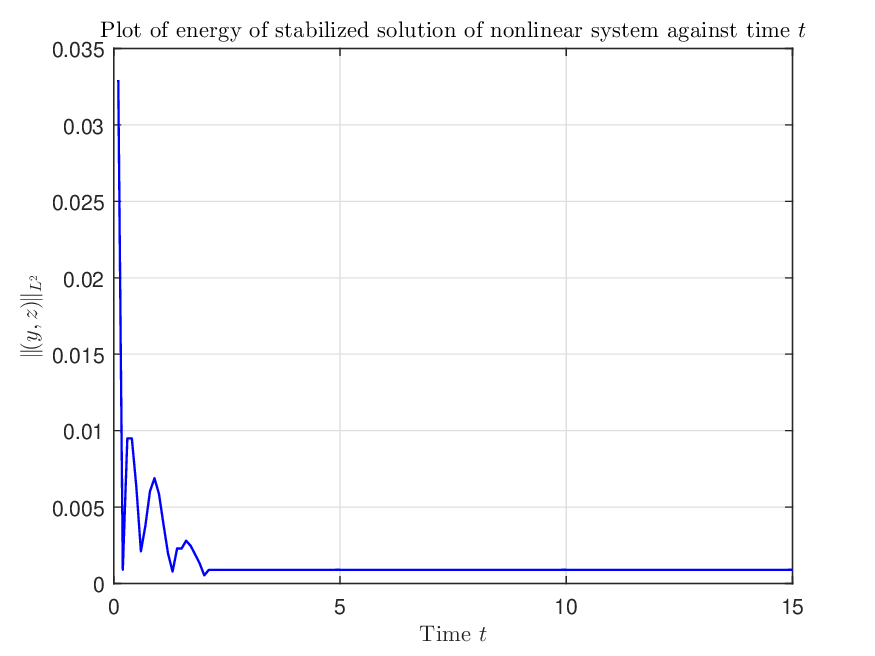}  
		\caption{After stabilization}
		\label{fig:NonLinStabSolAS}
	\end{subfigure}
	
	\caption{Plot of norm of solution of full non-linear system before and after stabilization}
	\label{fig:combined-NonLinSol}
\end{figure}

Now, we move to the case of the nonlinear system. First, we compute the solution of the uncontrolled system
\begin{align*}
	E_N \frac{d}{dt} \wt{Y}_N(t) = (A_N + \nu I_N) \wt{Y}_N(t) \widetilde{F}_N(\widetilde{Y}_N)(t) \quad \text{in } (0,1) \times (0,1), \quad \wt{Y}_N(0) = Y_{N_0},
\end{align*}
using Newton's method, as discussed in \cite[Section 4.3]{WKR}. We plot the energy of the solution of the uncontrolled system over time in Figure~\ref{fig:combined-NonLinSol}(A), where we observe that the energy increases with time, indicating that the solution is unstable. Next, we compute the feedback operator, following the same approach as for the linear system. Specifically, we solve the closed-loop system
\begin{align*}
	E_N \frac{d}{dt} \wt{Y}_N(t) = \left( (A_N + \nu I_N) - B_N S_N P_N \right) \wt{Y}_N(t) + \widetilde{F}_N(\widetilde{Y}_N)(t) \quad \text{in } (0,1) \times (0,1), \quad \wt{Y}_N(0) = Y_{N_0},
\end{align*}
and plot the energy over time in Figure~\ref{fig:combined-NonLinSol}(B). As shown, the solution stabilizes, confirming the effectiveness of the feedback control.

\subsection{Simulation around non-constant steady state}
To consider an example similar to the previous subsection, we take the computational domain as \( \Omega = (0,1) \times (0,1) \), which is discretized using a \( P_1 \) finite element method on a structured triangular mesh. For this example, we consider the shifted system \eqref{eq:GBHE-Lin-w-y_inft} as \eqref{eq:GBHE-NLClsCoupShifNCST} with the following data:
\begin{align*}
	\eta = 0.2, \, \alpha = 1, \, \delta = 1, \, \kappa = 1.5, \, \beta = 1.5, \, \gamma = 0.5, \, \lambda = 3,
\end{align*}
and \( y_0(x_1, x_2) = x_1(1 - x_1) x_2(1 - x_2) \), \( y_\infty(x_1, x_2) = \sin(\pi x_1) \sin(\pi x_2) \). The matrix form of the system is written as:
{\small
\begin{align*}
	& E_N \frac{d}{dt} \wt{W}_N(t) = (A_N + \nu I_N) \wt{W}_N(t) + \widetilde{H}_N(\widetilde{Y}_N)(t) + \widetilde{G}_N(\widetilde{Y}_N)(t) + \widetilde{Y}_{\infty,N}(\widetilde{Y}_N) + B_N \wt{u}_N \quad \text{in } (0,1) \times (0,1), \\
	& \wt{W}_N(0) = Y_{N_0} - Y_{\infty,N_0}.
\end{align*}}
Here, the matrices \( \widetilde{G}_N(\widetilde{Y}_N) \) and \( \widetilde{H}_N(\widetilde{Y}_N) \) correspond to the nonlinear terms \( h_1 \) and \( g_1 \) (see \eqref{eq:g1} - \eqref{eq:g1}), and \( \widetilde{Y}_{\infty,N}(\widetilde{Y}_N) \) corresponds to \( e^{(\nu - \lambda)t} \Delta y_\infty \).  Using Newton's method and following the procedure given in \cite[Section 4.5]{WKR}, we first compute the solution of the uncontrolled system:
\begin{align*}
	& E_N \frac{d}{dt} \wt{W}_N(t) = (A_N + \nu I_N) \wt{W}_N(t) + \widetilde{H}_N(\widetilde{Y}_N)(t) + \widetilde{G}_N(\widetilde{Y}_N)(t) + \widetilde{Y}_{\infty,N}(\widetilde{Y}_N) \quad \text{in } (0,1) \times (0,1), \\
	& \wt{W}_N(0) = Y_{N_0} - Y_{\infty,N_0},
\end{align*}
and plot its energy over time in Figure~\ref{fig:combined-NonLinSolNZ}(A) to confirm instability. Next, we compute the feedback operator by solving the algebraic Riccati equation \eqref{eq:matrixRiccati} in matrix form and obtain the closed-loop system:
\begin{align*}
	& E_N \frac{d}{dt} \wt{W}_N(t) = \left( (A_N + \nu I_N) - B_N S_N P_N \right) \wt{W}_N(t) + \widetilde{H}_N(\widetilde{Y}_N)(t) \\
	& \hspace{5cm} +  \widetilde{G}_N(\widetilde{Y}_N)(t) + \widetilde{Y}_{\infty,N}(\widetilde{Y}_N) \quad \text{in } (0,1) \times (0,1), \\
	& \wt{W}_N(0) = Y_{N_0} - Y_{\infty,N_0}.
\end{align*}
The energy of the solution is plotted in Figure~\ref{fig:combined-NonLinSolNZ}(B), confirming the stability of the closed-loop system.

\begin{figure}[h]
	\centering
	\begin{subfigure}{0.48\textwidth}
		\centering
		\includegraphics[width=\textwidth]{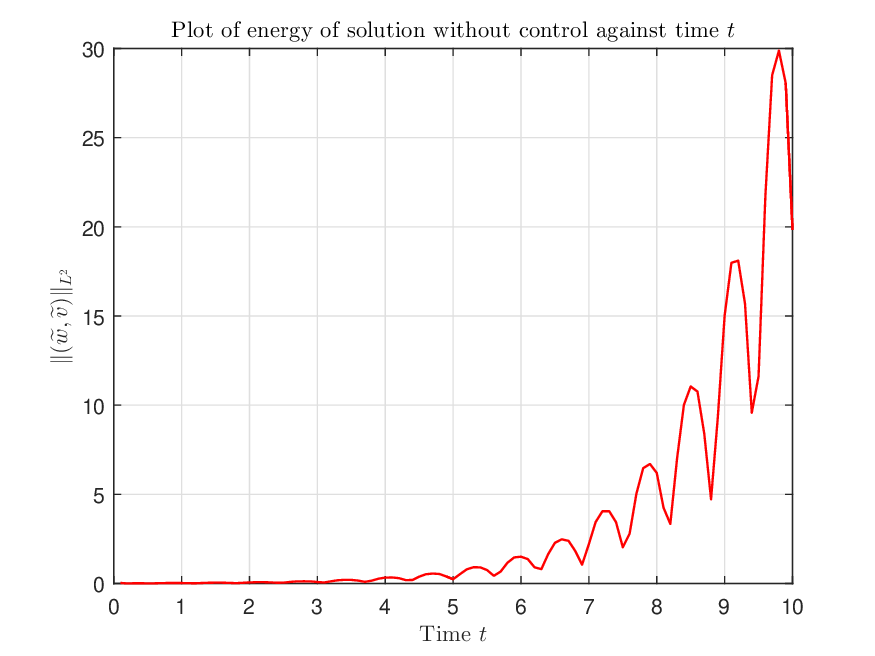}  
		\caption{Before stabilization}
		\label{fig:NonLinSolBSNZ}
	\end{subfigure}
	\hfill
	\begin{subfigure}{0.48\textwidth}
		\centering
		\includegraphics[width=\textwidth]{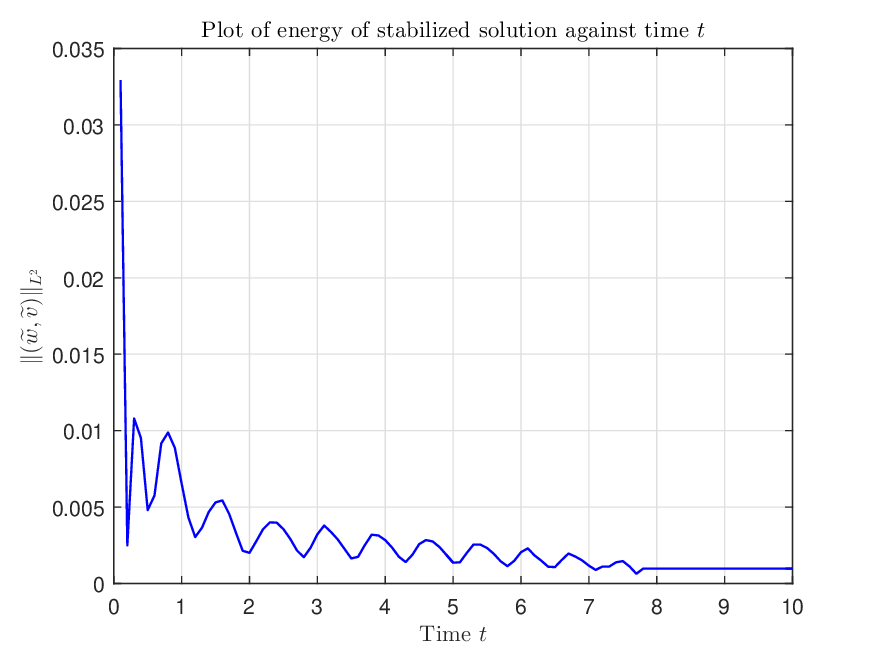}  
		\caption{After stabilization}
		\label{fig:NonLinStabSolASNZ}
	\end{subfigure}
	
	\caption{Plot of norm of solution of full non-linear system before and after stabilization}
	\label{fig:combined-NonLinSolNZ}
\end{figure}

\appendix
\section{Proof of Proposition \ref{pps:heatStrongL2L6}} \label{app:heat}

\begin{proof}[Proof of Proposition \ref{pps:heatStrongL2L6}]
	It is well-known that for $w_0\in H^1_0(\Omega)$ and $f\in L^2(0,\infty; L^2(\Omega)),$ the given heat equation has a unique strong solution $w \in L^2(0,\infty; H^2(\Omega)) \cap L^\infty(0,\infty; H^1_0(\Omega)) $ with  $w_t \in L^2(0,\infty; L^2(\Omega))$ (see \cite[Theorem 5, Section 7.1]{Eva}) satisfying 
	\begin{align*}
		\|w\|_{L^\infty(0,\infty; H^1_0(\Omega))} + \|w\|_{L^2(0,\infty; H^2(\Omega))}+\|w\|_{H^1(0,\infty;L^2(\Omega))} \le  C\left( \|w_0\|_{H^1_0(\Omega)} +\|f\|_{L^2(0,\infty;L^2(\Omega))}  \right),
	\end{align*}
	for some  constant $C>0.$ For the values of $\delta$ given in \eqref{eqdef:delta}, by the Sobolev embedding, we know that $H_0^1(\Omega)\hookrightarrow L^{2(\delta+1)}(\Omega)$. Now, it remains to show that  $w\in L^{2(\delta+1)}(0,\infty; L^{6(\delta+1)}(\Omega))$. The following calculations can be justified by using a proper Faedo-Galerkin approximation scheme.  Therefore, we formally take an inner product in the heat equation with $|w|^{2\delta}w$ and observe
	\begin{align}\label{eq:wkform_p-2}
		& \left( \frac{dw}{dt}, |w|^{2\delta}w\right)-\eta \left( \Delta w, |w|^{2\delta}w\right)  = \left(f , |w|^{2\delta}w\right).
	\end{align}
	Now, using integration by parts and employing simple manipulations, we have
	\begin{align}\label{1st_est}
		\frac{d}{dt}\|w(t)\|_{L^{2(\delta+1)}}^{2(\delta+1)} & = \frac{d}{dt} \int_\Omega |w(t,x)|^{2(\delta+1)}dx = \int_\Omega \frac{d}{dt} \left( |w(t,x)|^2\right)^{{(\delta+1)}}dx\no \\
		&  = {(\delta+1)} \int_\Omega  \left( |w(t,x)|^{2}\right)^{\delta} \frac{d}{dt}|w(t,x)|^{2} dx   \no \\
		& = 2(\delta+1)\int_\Omega   \frac{dw(t,x)}{dt} |w(t,x)|^{2\delta}w(t,x) dx\no\\
		& =2(\delta+1) \left( \frac{dw(t,x)}{dt}, |w(t,x)|^{2\delta}w(t,x)\right).
	\end{align}
	Similarly, it can be verified that 
	\begin{align}\label{2nd_est}
		\left( -\Delta w, |w|^{2\delta}w \right) & = -\int_\Omega \Delta w |w|^{2\delta}w dx = -\sum_{i=1}^d \int_\Omega \frac{\partial^2 w}{\partial x_i^2} |w|^{2\delta}w dx \no\\
		& =\sum_{i=1}^d\int_\Omega \frac{\partial w}{\partial x_i} \left( |w|^{2\delta} \frac{\partial w}{\partial x_i} + 2\delta  (|w|^2)^{\delta-1}  w^2 \frac{\partial w}{\partial x_i}\right) dx \no\\
		& =(2\delta+1) \int_\Omega |w|^{2\delta} \sum_{i=1}^d\left(\frac{\partial w}{\partial x_i}\right)^2 dx = (2\delta+1) \||w|^{\delta}\nabla w\|^2 .
	\end{align}
	Now, we estimate right hand side of \eqref{eq:wkform_p-2}, using integration by parts and Cauchy Schwartz inequality as follows:
	\begin{align}\label{f_est}
		\left\vert\left(f , |w|^{2\delta}w\right)\right\vert & = \left\vert\left(|w|^{\delta}f , |w|^{\delta}w\right)\right\vert  \le \| \nabla(|w|^{\delta}w)\| \| |w|^{\delta}f\|_{H^{-1}(\Omega)} \no\\
		& \le (\delta+1)\||w|^{\delta}\nabla w\| C \| |w|^{\delta} f\|_{L^{\frac{2\delta+2}{2\delta+1}}(\Omega)} \no\\
		& \le C (\delta+1)\||w|^{\delta}\nabla w\|  \| |w|^{\delta} \|_{L^{\frac{2(\delta+1)}{\delta}}(\Omega)}\|f\| \no\\
		& \le C (\delta+1) \||w|^{\delta}\nabla w\| \|w\|_{L^{2(\delta+1)}}^{\delta}\|f\| \le \frac{(2\delta+1) \eta}{2} \||w|^{\delta}\nabla w\|^2 + C\|w\|_{L^{2(\delta+1)}(\Omega)}^{2\delta}\|f\|^2. 
	\end{align}
	Using the estimates\eqref{1st_est}, \eqref{2nd_est}, and \eqref{f_est} in \eqref{eq:wkform_p-2}, we obtain 
	\begin{align*}
		\frac{1}{2(\delta+1)}\frac{d}{dt}\|w(t)\|_{L^{2(\delta+1)}(\Omega)}^{2(\delta+1)}+ \frac{(2\delta+1)\eta}{2} \||w|^{\delta}\nabla w\|^2 \le C\|w\|_{L^{2(\delta+1)}(\Omega)}^{2\delta}\|f\|^2.
	\end{align*}
	For $\zeta(t):=\|w(t)\|_{L^{2(\delta+1)}(\Omega)}^{2(\delta+1)},$ the above relation can be expressed as 
	\begin{align*}
		\zeta(t) \le \zeta(0)+C\int_0^t \zeta(s)^{\frac{2\delta}{2(\delta+1)}}\|f(s)\|^2 ds.
	\end{align*}
	Since $0<\frac{2\delta}{2(\delta+1)}<1,$ by using a nonlinear generalization of Gronwall’s inequality (Theorem \ref{lem:NonlinGronwall}), it can be derived that
	\begin{align*}
		\zeta(t) \le \left\lbrace \zeta(0)^{\frac{1}{\delta+1}} + \frac{\delta}{\delta+1}\int_0^t \|f(s)\|^2 ds   \right\rbrace^{\delta+1},
	\end{align*}
	that is, 
	\begin{align*}
		\|w(t)\|_{L^{2(\delta+1)}(\Omega)}^{2(\delta+1)} \le \left\lbrace \|w_0\|_{L^{2(\delta+1)}(\Omega)}^2 + \frac{\delta}{\delta+1}\int_0^t \|f(s)\|^2 ds   \right\rbrace^{\delta+1}.
	\end{align*}
	Thus, we deduce for all $0\le t<\infty,$
	\begin{align*}
	&	\|w(t)\|_{L^{2(\delta+1)}(\Omega)}^{2(\delta+1)} + (2\delta+1)(\delta+1)\eta \int_0^t \||w(s)|^{\delta}\nabla w(s)\|^2 ds \\& \le C \left( \|w_0\|_{L^{2(\delta+1)}(\Omega)}^{2(\delta+1)} + \|f(t)\|_{L^2(0,\infty; L^2(\Omega))}^{2(\delta+1)} \right).
	\end{align*}
	Therefore, the solution exists for all $t\in [0,\infty),$ and $w \in C([0,\infty);L^{2(\delta+1)}(\Omega)),$ since $H_0^1(\Omega)\hookrightarrow L^{2(\delta+1)}(\Omega)$.  Now, to show that $w \in L^{2(\delta+1)}(0,\infty; L^{6(\delta+1)}(\Omega)),$ we observe that
	\begin{align*}
		\|w\|_{L^{2(\delta+1)}(0,\infty; L^{6(\delta+1)}(\Omega))}^{2(\delta+1)} & = \int_0^\infty \|w(t)\|_{L^{6(\delta+1)}(\Omega)}^{2(\delta+1)} dt  = \int_0^\infty \|(w(t))^{\delta+1}\|_{L^6(\Omega)}^2 dt \\
		& \le C(\delta+1)^2 \int_0^\infty \| (w(t))^{\delta} \nabla w(t)\|^2 dt \\
		& \le  C(\delta+1)^2 \left( \|w_0\|_{L^{2(\delta+1)}(\Omega)}^{2(\delta+1)} + \|f(t)\|_{L^2(0,\infty; L^2(\Omega))}^{2(\delta+1)} \right),
	\end{align*}
	which  completes the proof.
\end{proof}

\bibliographystyle{amsplain}
\bibliography{References}

\providecommand{\bysame}{\leavevmode\hbox to3em{\hrulefill}\thinspace}
\providecommand{\MR}{\relax\ifhmode\unskip\space\fi MR }
\providecommand{\MRhref}[2]{%
  \href{http://www.ams.org/mathscinet-getitem?mr=#1}{#2}
}
\providecommand{\href}[2]{#2}
\begin{thebibliography}{10}

\bibitem{Agmon10}
S.~Agmon, \emph{Lectures on elliptic boundary value problems}, AMS Chelsea
  Publishing, Providence, RI, 2010, Prepared for publication by B. Frank Jones,
  Jr. with the assistance of George W. Batten, Jr., Revised edition of the 1965
  original. \MR{2589244}

\bibitem{JPR6-2017}
C.~Airiau, J.-M. Buchot, R.~K. Dubey, M.~Fourni\'{e}, J.-P. Raymond, and
  J.~Weller-Calvo, \emph{Stabilization and best actuator location for the
  {N}avier-{S}tokes equations}, SIAM J. Sci. Comput. \textbf{39} (2017), no.~5,
  B993--B1020. \MR{3713907}

\bibitem{AKR24}
W.~Akram, \emph{Feedback stabilization and finite element error analysis of
  viscous burgers equation around non-constant steady state}, arXiv:2406.01553
  (2024).

\bibitem{WMMTMNSEm}
W.~Akram, M.~Bag, and M.~T. Mohan, \emph{Stabilizability of 2d and 3d
  navier-stokes equations with memory around a non-constant steady state}, arXiv:2502.00517
  (2025).

\bibitem{WKR}
W.~Akram and D.~Mitra, \emph{Local stabilization of viscous {B}urgers equation
  with memory}, Evol. Equ. Control Theory \textbf{11} (2022), no.~3, 939--973.
  \MR{4408112}

\bibitem{WKRPCE}
W.~Akram, D.~Mitra, N.~Nataraj, and M.~Ramaswamy, \emph{Feedback stabilization
  of a parabolic coupled system and its numerical study}, Math. Control Relat.
  Fields \textbf{14} (2024), no.~2, 695--746. \MR{4729331}

\bibitem{BadESAIM9}
M.~Badra, \emph{Feedback stabilization of the 2-{D} and 3-{D} {N}avier-{S}tokes
  equations based on an extended system}, ESAIM Control Optim. Calc. Var.
  \textbf{15} (2009), no.~4, 934--968. \MR{2567253}

\bibitem{Badra-T}
M.~Badra and T.~Takahashi, \emph{Stabilization of parabolic nonlinear systems
  with finite dimensional feedback or dynamical controllers: application to the
  {N}avier-{S}tokes system}, SIAM J. Control Optim. \textbf{49} (2011), no.~2,
  420--463. \MR{2784695}

\bibitem{BBSW15}
E.~B\"ansch, P.~Benner, J.~Saak, and H.~K. Weichelt, \emph{Riccati-based
  boundary feedback stabilization of incompressible {N}avier-{S}tokes flows},
  SIAM J. Sci. Comput. \textbf{37} (2015), no.~2, A832--A858. \MR{3328150}

\bibitem{BarbSICON12}
V.~Barbu, \emph{Stabilization of {N}avier-{S}tokes equations by oblique
  boundary feedback controllers}, SIAM J. Control Optim. \textbf{50} (2012),
  no.~4, 2288--2307. \MR{2974739}

\bibitem{BarbSICON11}
V.~Barbu, S.~S. Rodrigues, and A.~Shirikyan, \emph{Internal exponential
  stabilization to a nonstationary solution for 3{D} {N}avier-{S}tokes
  equations}, SIAM J. Control Optim. \textbf{49} (2011), no.~4, 1454--1478.
  \MR{2817486}

\bibitem{BDDM}
A.~Bensoussan, G.~D. Prato, M.~C. Delfour, and S.~K. Mitter,
  \emph{Representation and control of infinite dimensional systems}, second
  ed., Systems \& Control: Foundations \& Applications, Birkh\"{a}user Boston,
  Inc., Boston, MA, 2007. \MR{2273323}

\bibitem{BreKun14}
T.~Breiten and K.~Kunisch, \emph{Riccati-based feedback control of the
  monodomain equations with the {F}itz{H}ugh-{N}agumo model}, SIAM J. Control
  Optim. \textbf{52} (2014), no.~6, 4057--4081. \MR{3342160}

\bibitem{BreKun20}
\bysame, \emph{Feedback stabilization of the three-dimensional
  {N}avier-{S}tokes equations using generalized {L}yapunov equations}, Discrete
  Contin. Dyn. Syst. \textbf{40} (2020), no.~7, 4197--4229. \MR{4097541}

\bibitem{BreKunPf19}
T.~Breiten, K.~Kunisch, and L.~Pfeiffer, \emph{Feedback stabilization of the
  two-dimensional {N}avier-{S}tokes equations by value function approximation},
  Appl. Math. Optim. \textbf{80} (2019), no.~3, 599--641. \MR{4026593}

\bibitem{Canon99}
J.~R. Cannon, R.~E. Ewing, Y.~He, and Y.~Lin, \emph{A modified nonlinear
  {G}alerkin method for the viscoelastic fluid motion equations}, Internat. J.
  Engrg. Sci. \textbf{37} (1999), no.~13, 1643--1662. \MR{1711006}

\bibitem{SChowErve}
S.~Chowdhury and S.~Ervedoza, \emph{Open loop stabilization of incompressible
  {N}avier-{S}tokes equations in a 2d channel using power series expansion}, J.
  Math. Pures Appl. (9) \textbf{130} (2019), 301--346. \MR{4001636}

\bibitem{CMRR-JDE}
S.~Chowdhury, D.~Maity, M.~Ramaswamy, and J.-P. Raymond, \emph{Local
  stabilization of the compressible {N}avier-{S}tokes system, around null
  velocity, in one dimension}, J. Differential Equations \textbf{259} (2015),
  no.~1, 371--407. \MR{3335930}

\bibitem{CRR-SICON}
S.~Chowdhury, M.~Ramaswamy, and J.-P. Raymond, \emph{Controllability and
  stabilizability of the linearized compressible {N}avier-{S}tokes system in
  one dimension}, SIAM J. Control Optim. \textbf{50} (2012), no.~5, 2959--2987.
  \MR{3022094}

\bibitem{JPRSheThev}
S.~Dharmatti, J.-P. Raymond, and L.~Thevenet, \emph{{$H^\infty$} feedback
  boundary stabilization of the two-dimensional {N}avier-{S}tokes equations},
  SIAM J. Control Optim. \textbf{49} (2011), no.~6, 2318--2348. \MR{2854620}

\bibitem{DragSS}
S.~S. Dragomir, \emph{Some gronwall type inequalities and applications},
  Science Direct Working Paper No S1574-0358(04)70847-3, Available at SSRN:
  https://ssrn.com/abstract=3158353 \textbf{187} (2003), 1--197.

\bibitem{EDTS}
T.S. El-Danaf, \emph{Solitary wave solutions for the generalized burgers-huxley
  equation}, International Journal of Nonlinear Sciences and Numerical
  Simulation \textbf{8} (2007), no.~8, 315–--318.

\bibitem{Eva}
L.~C. Evans, \emph{Partial differential equations}, second ed., Graduate
  Studies in Mathematics, vol.~19, American Mathematical Society, Providence,
  RI, 2010. \MR{2597943}

\bibitem{Kes}
S.~Kesavan, \emph{Topics in functional analysis and applications}, John Wiley
  \& Sons, Inc., New York, 1989. \MR{990018}

\bibitem{MTMAKRicGBHE}
A.~Khan, M.~T. Mohan, and R.~Ruiz-Baier, \emph{Conforming, nonconforming and
  {DG} methods for the stationary generalized {B}urgers-{H}uxley equation}, J.
  Sci. Comput. \textbf{88} (2021), no.~3, Paper No. 52, 26. \MR{4288303}

\bibitem{MTMAnkit}
A.~Kumar and M.~T. Mohan, \emph{Large deviation principle for occupation
  measures of stochastic generalized {B}urgers-{H}uxley equation}, J. Theoret.
  Probab. \textbf{36} (2023), no.~1, 661--709. \MR{4585130}

\bibitem{Las-Tr-91}
I.~Lasiecka and R.~Triggiani, \emph{Numerical approximations of algebraic
  {R}iccati equations for abstract systems modelled by analytic semigroups, and
  applications}, Math. Comp. \textbf{57} (1991), no.~196, 639--662, S13--S37.
  \MR{1094953}

\bibitem{Lasiecka1}
\bysame, \emph{Control theory for partial differential equations: continuous
  and approximation theories. {I}},  \textbf{74} (2000), xxii+644+I4, Abstract
  parabolic systems. \MR{1745475}

\bibitem{Lasieckabook}
\bysame, \emph{Control theory for partial differential equations: continuous
  and approximation theories. {II}},  \textbf{75} (2000), i--xxii and 645--1067
  and I1--I4, Abstract hyperbolic-like systems over a finite time horizon.
  \MR{1745476}

\bibitem{SMAKSisc}
S.~Mahahjan and A.~Khan, \emph{Finite element approximation for the delayed
  generalized {B}urgers-{H}uxley equation with weakly singular kernel: {P}art
  {II} {N}onconforming and {DG} approximation}, SIAM J. Sci. Comput.
  \textbf{46} (2024), no.~5, A2972--A2998. \MR{4798380}

\bibitem{MTM_Camwa}
S.~Mahajan, A.~Khan, and M.~T. Mohan, \emph{Finite element approximation for a
  delayed generalized {B}urgers-{H}uxley equation with weakly singular kernels:
  {P}art {I} well-posedness, regularity and conforming approximation}, Comput.
  Math. Appl. \textbf{174} (2024), 261--286. \MR{4797426}

\bibitem{MRR_ADE}
D.~Mitra, M.~Ramaswamy, and J.-P. Raymond, \emph{Local stabilization of
  compressible {N}avier-{S}tokes equations in one dimension around non-zero
  velocity}, Adv. Differential Equations \textbf{22} (2017), no.~9-10,
  693--736. \MR{3656490}

\bibitem{MTMSSS}
M.~T. Mohan and S.~S. Sritharan, \emph{Stochastic {N}avier-{S}tokes equations
  perturbed by {L}\'evy noise with hereditary viscosity}, Infin. Dimens. Anal.
  Quantum Probab. Relat. Top. \textbf{22} (2019), no.~1, 1950006, 32.
  \MR{3942498}

\bibitem{MuntNSEMem}
I.~Munteanu, \emph{Boundary stabilisation of the {N}avier-{S}tokes equation
  with fading memory}, Internat. J. Control \textbf{88} (2015), no.~3,
  531--542. \MR{3303720}

\bibitem{Mont}
\bysame, \emph{Stabilization of semilinear heat equations, with fading memory,
  by boundary feedbacks}, J. Differential Equations \textbf{259} (2015), no.~2,
  454--472. \MR{3338307}

\bibitem{JPR15SICON}
P.~A. Nguyen and J.-P. Raymond, \emph{Boundary stabilization of the
  {N}avier-{S}tokes equations in the case of mixed boundary conditions}, SIAM
  J. Control Optim. \textbf{53} (2015), no.~5, 3006--3039. \MR{3396388}

\bibitem{Nirenberg1959}
L.~Nirenberg, \emph{On elliptic partial differential equations}, Ann. Scuola
  Norm. Sup. Pisa Cl. Sci. (3) \textbf{13} (1959), 115--162. \MR{109940}

\bibitem{JPR}
J.-P. Raymond, \emph{Feedback boundary stabilization of the two-dimensional
  {N}avier-{S}tokes equations}, SIAM J. Control Optim. \textbf{45} (2006),
  no.~3, 790--828. \MR{2247716}

\bibitem{JPR2010}
J.-P. Raymond and L.~Thevenet, \emph{Boundary feedback stabilization of the two
  dimensional {N}avier-{S}tokes equations with finite dimensional controllers},
  Discrete Contin. Dyn. Syst. \textbf{27} (2010), no.~3, 1159--1187.
  \MR{2629581}

\bibitem{RROG}
M.~Renardy and R.~C. Rogers, \emph{An introduction to partial differential
  equations}, Springer-Verlag, New York \textbf{13} (2004), xiv+434.
  \MR{2028503}

\bibitem{RobiIDDS}
J.~C. Robinson, \emph{Infinite-dimensional dynamical systems}, Cambridge Texts
  in Applied Mathematics, Cambridge University Press, Cambridge, 2001, An
  introduction to dissipative parabolic PDEs and the theory of global
  attractors. \MR{1881888}

\bibitem{RaghuMTMGBH}
S.~L. R.~R. Singh and M.~T. Mohan, \emph{Analysis and numerical study of
  boundary control of generalized burgers-huxley equation}, 2023.

\bibitem{RaghuMTMKdvGBH}
\bysame, \emph{Boundary control of generalized korteweg-de vries-burgers-huxley
  equation: Well-posedness, stabilization and numerical studies}, Math. Control
  Relat. Fields (2025).

\bibitem{Thev-JPR}
L.~Thevenet, J.-M. Buchot, and J.-P. Raymond, \emph{Nonlinear feedback
  stabilization of a two-dimensional {B}urgers equation}, ESAIM Control Optim.
  Calc. Var. \textbf{16} (2010), no.~4, 929--955. \MR{2744156}

\bibitem{Tucs}
M.~Tucsnak and G.~Weiss, \emph{Observation and control for operator
  semigroups}, Birkh\"{a}user Advanced Texts: Basler Lehrb\"{u}cher.
  [Birkh\"{a}user Advanced Texts: Basel Textbooks], Birkh\"{a}user Verlag,
  Basel, 2009. \MR{2502023}

\bibitem{WZLuGBHE}
X.~Y. Wang, Z.~S. Zhu, and Y.~K. Lu, \emph{Solitary wave solutions of the
  generalised {B}urgers-{H}uxley equation}, J. Phys. A \textbf{23} (1990),
  no.~3, 271--274. \MR{1033403}

\bibitem{WazwazGBHE}
A.-M. Wazwaz, \emph{Travelling wave solutions of generalized forms of
  {B}urgers, {B}urgers-{K}d{V} and {B}urgers-{H}uxley equations}, Appl. Math.
  Comput. \textbf{169} (2005), no.~1, 639--656. \MR{2171174}

\end{thebibliography}

\end{document}